\documentclass[11pt]{amsart}

\usepackage{amsmath}
\usepackage{amssymb}
\usepackage{amsbsy}
\usepackage{amscd}
\usepackage{amsfonts}
\usepackage{amsopn}
\usepackage{amstext}
\usepackage{amsthm}
\usepackage{amsxtra}
\everymath={\displaystyle}

\newcommand{\beq}{\begin{equation}}
\newcommand{\eeq}{\end{equation}}

\newcommand{\bee}{\begin{eqnarray}}
\newcommand{\ene}{\end{eqnarray}}
\newcommand{\bea}{\begin{eqnarray*}}
\newcommand{\ena}{\end{eqnarray*}}

\newcommand*{\R}{\ensuremath{\mathbb{R}}}

\newcommand*{\Z}{\ensuremath{\mathbb{Z}}}

\newcommand{\ee}{\varepsilon}

\newcommand{\rad}{\operatorname{rad}}

\newcommand{\op}{\mathrm{op}}
\newcommand{\1}{\mathbf{1}}

\newcommand{\f}{\frac}

\theoremstyle{plain}
\newtheorem{theo}{Theorem}[section]
\newtheorem{lemma}[theo]{Lemma}

\newtheorem{prop}[theo]{Proposition}

\newtheorem{rem}[theo]{Remark}

\theoremstyle{definition}

\numberwithin{equation}{section}

\makeatletter
\@namedef{subjclassname@2020}{\textup{2020} Mathematics Subject Classification}
\makeatother

\begin{document}

\title[]{New Bounds for $\mathrm{GL}(3)\times\mathrm{GL}(2)$ Shifted Convolution Sums}

\keywords{Shifted convolution sums; Fourier coefficients; Circle methods}
\subjclass[2020]{11F30, 11F66, 11M41, 11P55}
\date{\today}

\author[G. W. Hu]{Guangwei Hu}
\address{School of Mathematics and Statistics, Shandong Normal University, Jinan, Shandong 250358, China} \email{gwhu@sdnu.edu.cn}
\author[G. S. L\"{u}]{Guangshi L\"{u}}
\address{School of Mathematics, Shandong University,
Jinan 250100, China}\email{gslv@sdu.edu.cn}

\maketitle

\begin{abstract}

Shifted convolution sums of $\mathrm{GL}(3)\times\mathrm{GL}(2)$ type have been extensively studied in several important settings, while a number of natural cases remain open. In this paper, we first revisit the shifted convolution sums involving $\lambda_\pi(1,n)$ and the divisor function $d(n)$, improving the recent power saving of $1/42$ obtained by Blomer and Li to $1/22$. We also treat the corresponding sums involving $\lambda_\pi(1,n)$ and $r(n,\mathcal{Q})$, settling a previously open case and obtaining a power saving of $1/22$. Here $\lambda_\pi(1,n)$ denotes the normalized Fourier coefficients of a Hecke--Maass cusp form $\pi$ on $\mathrm{SL}(3,\mathbb{Z})$, and $r(n,\mathcal{Q})$ denotes the number of representations of a positive integer $n$ by a positive definite integral binary quadratic form $\mathcal{Q}$.

The proof combines two circle methods with a different organization of the major-arc analysis. A key point is that the nonzero Poisson frequencies are kept together in a single coefficient vector, rather than estimated separately, which allows them to be controlled by a finite matrix norm. The zero Poisson frequency is treated separately: its contribution is independent of the reduced residue class, and its arithmetic mean cancels the constant term in the complementary Jutila weight before absolute values are taken. We make this cancellation explicit, including the relevant normalization and local factors. The same mechanism extends to the binary quadratic-form setting, while the remaining Jutila contribution is handled by the available correlation estimates.
\end{abstract}

\section{Introduction}

Shifted convolution sums form a foundational topic in number theory, as they quantify correlations between two arithmetic functions evaluated at arguments differing by a fixed additive shift. More precisely, given two arithmetic sequences $\alpha$ and $\beta$, a fundamental summation object frequently encountered in number theory takes the form
\begin{align*}
\sum_{n\leq X} \alpha(n)\beta(n + h).
\end{align*}
Here $X$ denotes the upper bound of the summation variable $n$, and $h$ is a fixed nonzero integer referred to as the shift.  The primary goal of studying this shifted convolution sum is to quantify the correlation between two arithmetic functions, which can be accomplished by establishing a nontrivial power saving in $X$. Shifted convolution sums are deeply intertwined with central problems in the subject, including moments of automorphic $L$-functions, subconvexity estimates, equidistribution questions, and the quantum unique ergodicity conjecture.

Shifted convolution sums manifest themselves in many different forms, with arithmetic weight functions arising from a broad collection of classical objects. Prominent examples include sums involving the von Mangoldt function, the M\"obius function, divisor functions, and Fourier coefficients of $\mathrm{GL}(m)$ cusp forms, together with numerous other multiplicative arithmetic sequences extensively studied in the literature.

One classical and extensively investigated family of shifted convolution sums is built from Fourier coefficients of $\mathrm{GL}(2)$ cusp forms. Over recent decades, a rich and comprehensive theoretical framework has been developed for $\mathrm{GL}(2)\times\mathrm{GL}(2)$ shifted convolution sums, yielding definitive results for both holomorphic and Maass cusp forms. For more details concerning this topic as well as the body of relevant developments, we refer the reader to
\cite{Blomer-2004-IMRN,Blomer--Harcos-2008-Duke-Math,Duke-Friedlander-Iwaniec--1993-Invent,Duke-Friedlander-Iwaniec--1994-Invent,Harcos-2003-Math-Ann,Harcos-Michel,Holowinsky-2009-Duke-Math,Jutila-1996-Math-Z,KMV,Michel-2004-Ann-Math,Sarnak-2001-J-Funct-Anal,Selberg-1965} and the references therein.

Serious obstacles remain when one investigates shifted convolution sums attached to higher rank groups. Even the shifted convolution sums associated with $\mathrm{GL}(3)\times\mathrm{GL}(2)$ exhibit this sharp rise in complexity. The intricate structure of higher rank automorphic forms, combined with the shortage of effective tools for handling multidimensional character sums appearing in the course of analysis, renders such problems considerably more challenging.

On the other hand, there are many variants of $\mathrm{GL}(3)\times\mathrm{GL}(2)$ shifted convolution sums. Different types of these sums tend to call for quite different methods, which further adds to the difficulty of the problem. In the setting where one component corresponds to the triple divisor function $d_3(n)$ (the Fourier coefficient attached to a $\mathrm{GL}(3)$ Eisenstein series), the factorability of the underlying arithmetic function plays a central role in the analysis. By virtue of the favorable structural properties of the triple divisor function, several results on its associated shifted convolution sums have been established in recent years. For further details, see \cite{Hooley,Munshi-2013-divisor,Pitt-1995-Duke,Tang-2020-BAMS,Topacogullari-2016}.

Parallel to the non‑cuspidal setting of $\mathrm{GL}(3)$, the cuspidal case with arithmetic weight $\lambda_\pi(1,n)$, the normalized Fourier coefficients of a Hecke--Maass form $\pi$ on $\mathrm{SL}(3,\mathbb{Z})$, has likewise seen substantial progress. In this setting, however, one cannot make use of the factorability of the underlying arithmetic function.

A canonical example is provided by shifted convolution sums involving Fourier coefficients of automorphic forms on $\mathrm{GL}(3)$ and $\mathrm{GL}(2)$, that is,
\begin{align}\label{SCS-Munshi}
\sum_{n} \lambda_\pi(1,n)\lambda_f(n + h)\phi(n),
\end{align}
where $\lambda_f(n)$ denotes the normalized Fourier coefficients of a Hecke--Maass or  Hecke holomorphic cusp form $f$ on $\mathrm{SL}(2,\mathbb{Z})$, and $\phi$ is a smooth function compactly supported on $[X,2X]$. By the Cauchy--Schwarz inequality and Rankin--Selberg theory, one obtains the trivial bound $O(X^{1+\varepsilon})$. By exploiting the flexibility of factorable moduli in Jutila's circle method, Munshi \cite{Munshi-Duke2013} obtained a power saving of $1/26$ in $X$, after refining the relevant exponential-sum estimates as noted by Xi \cite{xi-shifted-sum-Forum}. Later, Xi \cite{xi-shifted-sum-Forum} further developed this approach by incorporating the theory of arithmetic exponent pairs for algebraic trace functions. He obtained the improved bound $O(X^{21/22+\varepsilon})$, corresponding to a power saving of $1/22$.

Another example of interest is a $\mathrm{GL}(3)\times\mathrm{GL}(2)$ shifted convolution sum where the Fourier coefficient $\lambda_f(n)$ appearing in \eqref{SCS-Munshi} is replaced by the divisor function $d(n)$.   In this setting, a direct application of Jutila's circle method encounters an additional difficulty, since the exponential sum involving $d(n)$ does not by itself yield the required saving. Recently, Blomer and Li \cite{Blomer-Li-2025} resolved this problem by combining two distinct delta-symbol methods. Their treatment uses Jutila's circle method with factorable moduli to create the bilinear structure required for one part of the argument, together with a Kloosterman refinement of the circle method for the complementary contribution.
In this way, they obtained a power saving of $1/42$.

This is the first problem we revisit in the present paper. A different organization of the major-arc analysis allows us to improve the power saving from $1/42$ to $1/22$.  Our main result is as follows.

\begin{theo}\label{B-L}
Let $X\geq 1$ and  $h$ be a positive integer with $h\leq X$. Let $W$ and $W_0$ be smooth functions compactly supported on $[X,2X]$ satisfying $x^jW^{(j)}(x)\ll_j1$, $x^jW_0^{(j)}(x)\ll_j1$, and let $\lambda_\pi(1,n)$ denote the normalized Fourier coefficients of a Hecke--Maass cusp form $\pi$ for $\mathrm{SL}(3,\mathbb{Z})$. Then, for any $\ee>0$, we have
\begin{align*}
\sum_{m-n=h}\lambda_\pi(1,n)d(m)W_0(m)W(n)\ll X^{1-\f{1}{22}+\ee},
\end{align*}
uniformly in $h$, where  the implied constant depends on $\pi$, $W$, $W_0$ and $\ee$.
\end{theo}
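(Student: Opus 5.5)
We follow the Blomer--Li strategy in outline but reorganize the major-arc analysis. First, separate the oscillation using Jutila's circle method with a family of factorable moduli $\mathcal{Q}=\{q=q_1q_2: q_1\sim Q_1,\ q_2\sim Q_2 \text{ prime}\}$ and an interval indicator of width $\delta\asymp Q^{-2}X^{\ee}$. The method produces an approximate identity
\[
\delta(m-n-h)\approx \frac{1}{\Lambda}\sum_{q\in\mathcal{Q}}\ \sideset{}{^*}\sum_{a\bmod q}\int_{-\delta}^{\delta} e\Big(\frac{a(m-n-h)}{q}\Big)e\big((m-n-h)x\big)\,dx,
\]
where $\Lambda=\sum_{q}\varphi(q)\asymp Q^2$. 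We write the resulting main sum as $S$. Its defect is the complementary Jutila weight $\tilde I(\alpha)-1$ on $\mathbb{R}/\mathbb{Z}$. The sum against this defect is handled with the Kloosterman-refined (DFI) delta method, exactly as in Blomer--Li, except that we do not take absolute values prematurely. Expanding the defect in Fourier series produces a constant term. This constant must be matched against the zero-frequency term that appears on the major arcs.

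Next, on the Jutila side, apply $\mathrm{GL}(3)$ Voronoi summation to the $n$-sum modulo $q$. This gives dual sums of length $\asymp q^3X^{\ee}/X$ weighted by hyper-Kloosterman-type sums. For the divisor sum in $m$ with additive twist $e(am/q)$, we use Poisson summation in the two variables of $d(m)=\sum_{uv=m}1$ rather than Voronoi. The zero Poisson frequency then gives the main term $\frac{1}{q}(\log(X/q^2)+2\gamma)$-type expression, which is independent of $a$. All nonzero frequencies $(k_1,k_2)$ are collected into one coefficient vector $\beta_{q}(k)$. The $\mathrm{GL}(3)$ dual coefficients $\lambda_\pi(r,n')$ are then paired with this vector through a bilinear form whose kernel comes from the character sum modulo $q=q_1q_2$. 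We bound this form by Cauchy--Schwarz over $n'$, with Rankin--Selberg supplying the $\ell^2$ mass of $\lambda_\pi$. What remains is the operator norm of a finite matrix indexed by $(q_2,k)$. We estimate that norm by opening the square, so that the diagonal contributes trivially, and by treating the off-diagonal terms with the square-root cancellation for the resulting complete sums modulo primes $q_2$, via Deligne or Weil. The factorability $q_1q_2$ is what provides the flexibility to balance the diagonal against the off-diagonal contributions.

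The zero-frequency contribution does not involve $a$. Summing over reduced residues $a$ therefore produces Ramanujan sums $c_q(-n-h)$, and averaging over $q\in\mathcal{Q}$ with the normalization $1/\Lambda$ reproduces the arithmetic mean term. We will check, including the local factors at $q_1$ and $q_2$ and the normalization $\Lambda$, that this term coincides exactly with the constant Fourier coefficient of the complementary weight. The two contributions then cancel identically before any absolute values are taken. After this cancellation the DFI-side remainder involves only nonconstant frequencies. For it, we invoke the available correlation estimates for $\lambda_\pi(1,n)$ against additive characters, namely Miller's bound $\sum_{n\le X}\lambda_\pi(1,n)e(n\alpha)\ll X^{3/4+\ee}$ and its averaged variants as used by Blomer--Li.

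Finally, optimize $Q_1,Q_2,\delta$. We expect the bilinear bound to give roughly $X^{1+\ee}\big(Q^{-1/2}X^{1/4}\ \text{-type}\big)$ losses balanced against the DFI contribution, and we aim for the choice $Q\asymp X^{1/2-1/22}$ with suitable $Q_2$, which yields the exponent $21/22$. The main obstacle is the matrix-norm bound for the nonzero-frequency vector. Controlling the off-diagonal character sums uniformly in the shift $h$ and in the divisor-Poisson frequencies is where the saving $1/22$ must come from. We would first try to reduce these sums to algebraic trace functions modulo $q_2$ and apply Deligne's bound, falling back on Xi's arithmetic exponent pairs if extra saving is needed. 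A secondary delicate point is the exact bookkeeping in the zero-frequency cancellation.
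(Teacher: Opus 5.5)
You have the right slogans (do not take absolute values before the constant term meets the zero frequency, and keep the nonzero frequencies in one vector), but you attach both to the wrong objects. The step that actually produces the gain is missing.

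\textbf{The cancellation.} The constant Fourier coefficient of $1-\chi$ is exactly $0$ by \eqref{eq:fourier-chi}. So your Jutila-side divisor zero frequency, the Ramanujan-sum term $c_q(n+h)$, has nothing to cancel against. In the paper, as in Blomer--Li, that term is just the index $m=0$ of the dual divisor variable in $S_1$, estimated together with the others. The cancellation that matters lives on the Kloosterman-circle major arcs $c\le C_0$. There the term $1$ of $1-\chi$, bounded with absolute values, costs about $\sum_{c\le C_0}c\,c_{\mathrm{sq}}^{1/2}\approx C_0^{2}$, which is trivial for $C_0\approx X^{1/2}$. The paper's mechanism on these arcs is as follows:
\begin{itemize}
\item expand $\chi(b/c+z)$ by \eqref{chi-repr};
\item the circle frequency $l=0$ vanishes because $4C_0<Q$;
\item two of the three branches disappear because $C_0Q_2\delta=o(1)$;
\item Poisson summation in the Jutila divisor $d$ modulo $g_2$ gives a zero frequency $\chi_0(c,z)$, independent of $b$, with $1-\chi_0(c,z)\ll X^{\ee}\bigl(1/(cQ\delta)+1/Q_1+1/Q_2\bigr)$.
\end{itemize}
This cancellation is only approximate. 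It comes from comparing the mean of $a_p(j)$ with a precise evaluation of $L$, and its error terms survive into the final bound; nothing cancels identically.

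\textbf{The matrix norm.} It is needed for the nonzero Poisson frequencies on these same arcs. The pairs $(p,k)$ with $k=l\nu$ form one vector, and the row map $(p,k)\mapsto -u-g_1k\bar p \bmod c$ is injective (Lemma \ref{lem:packed-row-injectivity}). The bound $\|T_{n_1,c}\|_{\op}\le\sqrt c$ (Lemma \ref{lem:uniform-operator-bound}) comes from factoring the kernel into finite Fourier transforms, with no Weil or Deligne input, and gives $XC_0/Q$. Your matrix step sits in $S_1$ instead. There, Cauchy--Schwarz, opening the square and Weil on the off-diagonal is exactly the Blomer--Li correlation argument (Lemma \ref{lem:blomer-li-correlation}), which can only reproduce \eqref{eq:s1-final-bound}.

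\textbf{Miller's bound.} The uniform bound $X^{3/4+\ee}$ is too weak on the complementary side. Combined with the $L^2$ norms of the divisor exponential sum and of $1-\chi$, it gives roughly $X^{5/4}(Q^2\delta)^{-1/2}$. The paper instead uses the Rankin--Selberg $\ell^2$ bound for $\lambda_\pi$ and the pointwise bound $X/C_0+C$ for the divisor sum on the minor arcs.

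\textbf{The parameters.} With $\delta\asymp Q^{-2}X^{\ee}$, Lemma \ref{Jutila circle method} only gives $\int_0^1|1-\chi|^2\ll X^{\ee}/(Q^2\delta)=X^{o(1)}$. The complementary part then has no power saving at all; the paper takes $\delta=X^{-1+\ee_0}$. Also, $Q\asymp X^{1/2-1/22}$ is on the wrong side of $X^{1/2}$. Since $C_0<C$, one has $X/C_0+C\ge 2X^{1/2}$, so \eqref{S_21-bound-BL} is at least of size $X^{3/2}/Q$. A saving of $X^{1/22}$ therefore forces $Q\ge X^{6/11}$, and the $S_1$ estimate itself assumes $Q\ge X^{1/2}$.

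The exponent $1/22$ is exactly the balance of $X^{3/2}/Q$ against the terms $X^{1/2}Q/Q_1^{1/2}$ and $X^{1/2}Q/Q_2^{1/4}$ of \eqref{eq:s1-final-bound} under $Q=Q_1Q_2$. This gives $Q_1=X^{2/11}$, $Q_2=X^{4/11}$, $Q=X^{6/11}$. That balance is legitimate only because the major-arc analysis yields $S_{2,0}\ll X^{\ee}(XC_0/Q+C_0^2/Q_1+C_0^2/Q_2)$ all the way up to $C_0=X^{1/2-\eta}$. This is precisely the step your proposal does not supply.
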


For simplicity, we formulate and prove the theorem for the relation $m-n=h$. The argument can be extended to the more general relation
$\lambda_1m-\lambda_2n=h$, where $\lambda_1,\lambda_2\in\mathbb{Z}\setminus\{0\}$ fixed, with the same power saving of $1/22$. This requires us to handle numerous $\gcd$ constraints, as in \cite{Blomer-Li-2025}.
In that setting, the shortening of the corresponding summation ranges may also yield additional savings in $\lambda_1$ and $\lambda_2$. Since keeping track of this dependence would introduce further technical details without affecting the main ideas of the argument, we restrict throughout to the case $\lambda_1=\lambda_2=1$.

We also consider another arithmetic function, namely $r(n,\mathcal{Q})$, the number of representations of a positive integer $n$ by a positive definite integral quadratic form $\mathcal{Q}$ in $l$ variables. More precisely, we consider sums of the form
\begin{align*}
	\sum_n \lambda_\pi(1,n+h)r(n,\mathcal{Q})\phi(n),
\end{align*}
where $\phi$ is a smooth function compactly supported on $[X,2X]$. By the Cauchy--Schwarz inequality, the individual bound for $r(n,\mathcal{Q})$, and Rankin--Selberg theory, one obtains the trivial bound
$O(X^{l/2+\varepsilon})$. In 2019, Jiang and L\"u \cite[p.375]{JL} used an asymptotic formula for $r(n,\mathcal{Q})$ to obtain nontrivial upper bounds for $l\geq 4$. In forthcoming work, we treat the case $l=3$ and obtain a power saving of $1/6$; further details will appear in \cite{HL-preprint}. Quadratic forms in fewer variables are of particular arithmetic interest, while the corresponding shifted convolution problems become significantly more difficult. The binary quadratic-form case, corresponding to $l=2$, is particularly challenging and has remained open.

In view of the structural similarities between the representation function $r(n,\mathcal{Q})$ and the divisor function $d(n)$, it is natural to ask whether analogous methods can yield nontrivial upper bounds of comparable strength for $\mathrm{GL}(3)\times\mathrm{GL}(2)$ shifted convolution sums involving $r(n,\mathcal{Q})$. This structural parallel suggests that the method developed for the divisor-function setting can be adapted, with suitable modifications, to the quadratic-form setting. Motivated by this observation, we resolve the binary quadratic-form case and obtain a power saving of $1/22$. Our second main result is as follows.

\begin{theo}\label{T1}
	Let $\mathcal{Q}(\mathbf{x})$ be a positive definite integral quadratic form in two variables, and let $r(n,\mathcal{Q})$ denote the number of representations of $n$ by $\mathcal{Q}(\mathbf{x})$. Let $\lambda_\pi(1,n)$ denote the normalized Fourier coefficients of a Hecke--Maass cusp form $\pi$ for $\mathrm{SL}(3,\mathbb{Z})$. Suppose that $\phi$ is a smooth function compactly supported on $[X,2X]$  satisfying $x^j\phi^{(j)}(x)\ll_j1$,  and let $h$ be a positive integer with $h\leq X$. Then, for any $\varepsilon>0$, we have
	\begin{align*}
		\sum_n
		\lambda_\pi(1,n+h)r(n,\mathcal{Q})\phi(n)
		\ll		X^{1-\f{1}{22}+\varepsilon},
	\end{align*}
	where  the implied constant depends on $\pi$, $\mathcal{Q}$, $\phi$ and $\ee$.
\end{theo}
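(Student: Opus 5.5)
We follow the strategy of Theorem \ref{B-L}, with the divisor function replaced by $r(n,\mathcal{Q})$. First we reduce to a genus/character decomposition. For a positive definite binary form of discriminant $-D$, the theta series $\theta_{\mathcal{Q}}(z)=\sum_n r(n,\mathcal{Q})e(nz)$ is a weight one modular form of level $D$ with nebentypus $\chi_{-D}$. It splits as an Eisenstein part plus a cusp part. The Eisenstein part is a linear combination of Hecke theta series attached to class group characters and genus characters, and its coefficients are of the form $\sum_{ab=n}\psi_1(a)\psi_2(b)$, with $\psi_1\psi_2=\chi_{-D}$ (together with $d\,|\,D$ twists $n\mapsto n/d$). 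The cusp part has coefficients $\lambda_g(n)$ of a weight one newform. For the cuspidal contribution we get the bound $X^{21/22+\varepsilon}$ from Xi's argument \cite{xi-shifted-sum-Forum} adapted to level $D$ and nebentypus, so it is enough to treat
\[
\sum_{m-n=h}\lambda_\pi(1,n)\Big(\sum_{ab=m}\psi_1(a)\psi_2(b)\Big)W_0(m)W(n),
\]
with $\psi_1,\psi_2$ characters mod divisors of $D$. This is a twisted analogue of Theorem \ref{B-L}. (For the problem as stated we substitute $n\mapsto n-h$.)

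We then run the two-circle-method argument of the divisor case. We detect $m-n=h$ with Jutila's circle method using factorable moduli $q=q_1q_2$ and $q_i\sim Q_i$, coprime to $D$, with a complementary Kloosterman-type delta for the remainder. On the $\mathrm{GL}(3)$ side we apply Voronoi summation to the $\lambda_\pi(1,n)$ sum. On the $\mathrm{GL}(2)$ side we open $\sum_{ab=m}\psi_1(a)\psi_2(b)e(am\bar{b}/q)$ and apply Poisson summation in $a$ and $b$ modulo $qD$. This is the twisted divisor Voronoi formula. Since $(q,D)=1$, the local factors at $D$ factor out as Gauss sums of $\psi_i$ and do not interact with the $q$-arithmetic. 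Following the organization announced in the abstract, the nonzero Poisson frequencies stay together in one coefficient vector and are bounded through a finite matrix norm in the bilinear (factorable-moduli) estimate. The arithmetic exponent pairs for the resulting trace functions then give the same saving $1/22$ as in Theorem \ref{B-L}, uniformly in the characters, because the twists only change the complete sums by bounded multiplicative factors.

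The main obstacle is the zero frequency. In the divisor case, the zero Poisson mode gives a main term $\frac{1}{q}(\log+\gamma\text{-type})$ that does not depend on the reduced residue class. Its average over $a\bmod q$ cancels the constant term of the complementary Jutila weight before absolute values are taken. In the twisted setting the zero mode exists only when $\psi_1$ or $\psi_2$ is principal, and then it equals $L(1,\chi_{-D})$ times local factors at $D$ (the class number term), divided by $q$. We check explicitly that this term is still independent of the residue mod $q$, since for $(q,D)=1$ the dependence on the residue enters only through $e(\cdot/q)$ evaluated at frequency zero. Then the same averaging cancellation holds with the normalization carried along. When both characters are nonprincipal there is no zero mode at all.

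Finally, the remaining Jutila error contribution is bounded with the available correlation estimates for $\sum|\sum\lambda_\pi(1,n)e(n\alpha)|^2$-type mean values, exactly as in the divisor case. Optimizing $Q_1,Q_2$ as there gives $X^{1-1/22+\varepsilon}$. The constants depend on $D$, and hence on $\mathcal{Q}$.
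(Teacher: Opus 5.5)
Your splitting of $\theta_{\mathcal Q}$ into Eisenstein and cusp parts is a legitimate alternative first step. The paper never splits $\theta_{\mathcal Q}$; it applies the binary Voronoi formula directly. But everything then rests on the ``twisted analogue of Theorem~\ref{B-L}'', and your account of its decisive step misidentifies the mechanism. On the major arcs $c\le C_0$ of the Kloosterman delta, the $\mathrm{GL}(2)$-side Voronoi main term is merely the only term that survives. What cancels the constant $1$ in $1-\chi(b/c+z)$ is not that main term, nor an average over $a\bmod q$, nor any $L(1,\chi_{-D})$ factor. It is the $\nu=0$ term $\chi_0(c,z)$, produced by expanding $\chi(b/c+z)$ via \eqref{chi-repr} and applying Poisson summation to the Jutila divisor variable $d$ modulo $g_2$. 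The input you need is the pointwise bound $|1-\chi_0(c,z)|\ll X^{\ee}\big(1/(cQ\delta)+1/Q_1+1/Q_2\big)$. This comes from evaluating the mean of $a_p(j)$ and the normalization $L$ to the same precision with identical local factors (Lemmas~\ref{eq:zero-poisson-main-term} and~\ref{eq:jutila-normalization-precise}). Because you make the Jutila moduli coprime to $D$, both $\chi_0$ and $L$ acquire local factors at $p\mid D$ that interact with $(c,D)$. Verifying that these still match is exactly the computation behind \eqref{eq:quadratic-zero-poisson-cancellation} (the factors $\vartheta(R/B)$, $\mathbf{1}_{(j,B)=1}$ and $\kappa_R$). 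Saying that ``the same averaging cancellation holds with the normalization carried along'' does not supply it.

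Moreover, you check residue-independence for the wrong moduli. The cancellation happens on the Farey moduli $c$, which are unrestricted. For $(c,D)>1$ the $\mathrm{GL}(2)$-side main term does depend on $b\bmod c$:
for $x^2+y^2$ and $c=4$ it carries $G_{\mathcal Q}(4,b)=8i\chi_{-4}(b)$;
for the genus-character coefficients $\sum_{kl=m}\chi_{-3}(k)\chi_5(l)$ of discriminant $-15$ and $c=3$, the main term is proportional to $\chi_{-3}(b)L(1,\chi_{-15})$, even though both characters are nonprincipal. This contradicts your ``no zero mode'' claim.

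Such a factor must be kept inside the complete character sum. That requires a weighted operator bound using only its sup norm (Lemma~\ref{lem:weighted-operator-bound}) and a separate pointwise bound for the weighted complete sum (Lemma~\ref{lem:quadratic-complete-sum}); your sketch supplies neither.

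The other ingredients are also misplaced.
\begin{itemize}
\item The matrix norm is applied on the major arcs of $S_2$, to the packed nonzero Poisson frequencies coming from $\chi(b/c+z)$.
\item No arithmetic exponent pairs are used; those belong to Xi's cusp-form argument. The exponent $1/22$ comes from balancing Proposition~\ref{prop:s2-major-arcs} against \eqref{eq:s1-final-bound}.
\item $S_1$ is handled by the Blomer--Li correlation sums of Lemma~\ref{lem:blomer-li-correlation}, with the prime $p$ in $q=pt$ kept inside the square. Primality is used there and in Lemma~\ref{lem:packed-row-injectivity}, so a general $q_1\sim Q_1$ does not suffice.
\item The $L^2$ mean value of $\sum\lambda_\pi(1,n)e(n\alpha)$ enters only the minor-arc bound.
\end{itemize}
Finally, the dilated components $n\mapsto n/d$ (non-fundamental discriminants, imprimitive forms) lead to sums along $n=dm+h$ with $d>1$. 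That is the case $\lambda_1\ne1$, which the paper only remarks can be handled, so your reduction to $m-n=h$ is incomplete. The level-$D$, nebentypus version of Xi's bound for the cusp part is asserted rather than checked.
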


As a direction for future research, the present approach may lead to improved upper bounds in asymptotic formulas associated with twisted moments of $\mathrm{GL}(3)$ $L$-functions. The flexibility of the method also suggests possible extensions to a broader class of related problems. We leave these questions and further applications of the present strategy for future work.

It may be useful to indicate some of the main features behind the proof. Inspired by Blomer and Li \cite{Blomer-Li-2025}, we decompose the shifted convolution sum into two contributions. The first is treated essentially by the available correlation estimates, whereas the main improvement comes from a different analysis of the complementary contribution. 

Unlike Blomer and Li \cite[Section 3.1]{Blomer-Li-2025}, who handle the constant term, zero circle frequency, and nonzero circle frequency separately, we arrange the parameters so that the zero circle frequency vanishes identically. Upon Poisson summation, the constant term is grouped together with the zero Poisson frequency before taking absolute values. The nonzero Poisson frequencies are instead retained as a single packed family and treated through the matrix norm; see Section \ref{sec-S2-BL}. The main saving of our work arises from this treatment.

A crucial point is the treatment of the nonzero Poisson frequencies. The relevant complete character sums admit a finite matrix representation together with a uniform operator norm bound. The gain needed here comes from applying this matrix formulation before the auxiliary variables are separated.  After Poisson summation, the relevant character sums are naturally indexed by rows of the form
\begin{align*}
	d_0-g_1l\nu\bar p \bmod c.
\end{align*}
Writing $k=l\nu$, we keep the whole $(p,k)$-family in a single coefficient vector rather than estimating the frequencies separately. The short support of the parameters makes the corresponding row map injective. This allows us to apply the finite matrix norm directly to the packed family before separating the auxiliary variables. In particular, the nonzero-frequency contribution is reduced to	$O({X^{1+\ee}C_0}/{Q})$, which is one of the main gains in the major-arc analysis.

On the other hand, the zero Poisson frequency exhibits a different structure. After recombining the divisor variables and the dyadic pieces, its multiplier $\chi_0(c,z)$ becomes independent of the reduced residue class. The elementary mean
\begin{align*}
	a_p(n)
	=
	\sum_{\substack{r\mid n\\(r,p)=1}}
	\frac{\mu(r)}{r},
	\qquad
	\sum_{n\leq T}a_p(n)
	=
	\frac{T}{\zeta(2)(1-p^{-2})}
	+O(\log(2T)),
\end{align*}
then yields
\begin{align*}
	1-\chi_0(c,z)	\ll
	X^\ee	\Big(\frac{1}{cQ\delta}	+\frac{1}{Q_1}+\frac{1}{Q_2}	\Big).
\end{align*}
The important point is that the constant term and the zero-frequency contribution are combined before absolute values are taken. This arithmetic cancellation removes the term that would otherwise dominate the major-arc estimate.

The same mechanism extends to the binary quadratic-form problem. There, the normalized quadratic Gauss sum acts as a bounded multiplier, so the matrix argument for the nonzero Poisson frequencies remains available, while the zero-frequency analysis is modified only by finitely many local factors. Consequently, the two problems lead to the same effective parameter balance. 
 With an appropriate choice of parameters, these estimates yield the power saving $1/22$ in both cases.

\section{Preliminaries}

In this section, we briefly recall several fundamental definitions and standard results concerning the two delta-symbol methods, the Voronoi summation formulae, and the Fourier coefficients of $\mathrm{SL}(3,\mathbb Z)$ Maass forms. 
We also record the character-sum estimates of Blomer--Li needed for $S_1$.  At the end of the section, we isolate the matrix structure of the complete character sum.

As usual, we write $e(z)=e^{2\pi iz}$ and $e_c(z)=e(z/c)$. A star on a residue-class sum means that the summation is restricted to reduced residue classes. The letter $\ee>0$ may be arbitrarily small and its value may vary from line to line.  
For the truncations below, we choose auxiliary positive constants $\ee_0,\ee_1,\eta_0$ sufficiently small in terms of the final $\ee$, with $\ee_1<\ee_0/10$, and put $\delta=X^{-1+\ee_0}.$
The resulting harmless powers will be absorbed into $X^\varepsilon$.

\subsection{Delta symbol methods}
In this subsection, we introduce two delta symbol methods. The first, due to Jutila \cite{Jutila-1992,Jutila-1996-Math-Z}, is highly flexible, though it only provides an approximation to the constant function in the $L^2$-sense. 

\begin{lemma}\label{Jutila circle method}
Let $Q \ge 1$,  and $w$ be a non-negative weight supported on $1\le q\le Q$. 
Let $\psi\colon [-1,1]\to [0,1]$ be a smooth function with $\smallint \psi=1$, and assume $0<\delta<1/2$.
Suppose that $L=\textstyle \sum_{q} \varphi(q)w(q) \neq 0$.
For $\alpha\in\mathbb{R}$, we define the $1$-periodic function
\begin{align*}
\chi(\alpha) = \frac{1}{\delta L} \sum_{q} w(q)
\sideset{}{^{*}}\sum_{a\bmod q}
\sum_{k\in\mathbb Z} \psi\Big(\frac1\delta\Big(\alpha - \frac{a}{q} + k\Big)\Big).
\end{align*}
Then we have
\begin{align*}
\int_0^1 \big(1-\chi(\alpha)\big)^2 d\alpha \ll \frac{Q^2 \|w\|^2_\infty |\log\delta|^3}{L^2 \delta}.
\end{align*}
\end{lemma}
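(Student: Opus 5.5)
The plan is to prove Jutila's $L^2$ estimate by Parseval, expanding $1-\chi$ in Fourier series on $\mathbb{R}/\mathbb{Z}$. Periodizing the bump gives, for every integer $m$,
\begin{align*}
\widehat{\chi}(m)=\frac{1}{L}\sum_q w(q)\,c_q(m)\,\widehat\psi(\delta m),
\end{align*}
where $c_q(m)=\sideset{}{^{*}}\sum_{a\bmod q}e(-am/q)$ is the Ramanujan sum and $\widehat\psi(\xi)=\int\psi(x)e(-x\xi)\,dx$. Since $c_q(0)=\varphi(q)$, $\widehat\psi(0)=1$ and $L=\sum_q\varphi(q)w(q)$, we get $\widehat\chi(0)=1$. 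The constant term therefore cancels exactly, and
\begin{align*}
\int_0^1(1-\chi(\alpha))^2\,d\alpha=\frac{1}{L^2}\sum_{m\neq0}|\widehat\psi(\delta m)|^2\Big|\sum_q w(q)c_q(m)\Big|^2 .
\end{align*}

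Next I bound the inner sum. Write $c_q(m)=\sum_{d\mid(q,m)}d\,\mu(q/d)$; then
\begin{align*}
\Big|\sum_q w(q)c_q(m)\Big|\le\|w\|_\infty\sum_{\substack{d\mid m\\ d\le Q}}d\sum_{r\le Q/d}1\le Q\,\|w\|_\infty\,\#\{d\mid m:\ d\le Q\}.
\end{align*}
Using $\#\{d\mid m: d\le Q\}\le\tau(m)$ would give a bound with $\sum_{|m|\le M}\tau(m)^2\asymp M\log^3M$.

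For the $m$-sum I use $|\widehat\psi(\xi)|\le1$ for all $\xi$, together with the rapid decay $|\widehat\psi(\xi)|\ll_A|\xi|^{-A}$, which holds because $\psi$ is smooth and compactly supported in $[-1,1]$. Split into $|m|\le M:=\delta^{-1}|\log\delta|^{1/2}$ and $|m|>M$. For the first range,
\begin{align*}
\sum_{|m|\le M}\tau(m)^2\ll M\log^3M.
\end{align*}
This gives $\delta^{-1}|\log\delta|^{3}$ up to harmless powers of $\log$. To land on exactly $|\log\delta|^3$, it is cleaner to use dyadic blocks $|m|\sim 2^j\delta^{-1}$ with weight $2^{-jA}$ for $j\ge0$, and a single block $|m|\le\delta^{-1}$. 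Then
\begin{align*}
\sum_{m\ne0}|\widehat\psi(\delta m)|^2\tau(m)^2\ll\sum_{j\ge0}2^{-jA}\,2^j\delta^{-1}\log^3(2^j/\delta)\ll\delta^{-1}|\log\delta|^3 .
\end{align*}
Combining this with the factor $Q^2\|w\|_\infty^2/L^2$ gives the claimed bound $\frac{Q^2\|w\|_\infty^2|\log\delta|^3}{L^2\delta}$. Here $0<\delta<1/2$ ensures $|\log\delta|\gg1$.

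The only delicate point is tracking the divisor-sum second moment $\sum_{m\le M}\tau(m)^2\ll M\log^3 M$, a standard fact, and making sure the tail in $m$ converges. Both are routine. The main conceptual step is the exact cancellation of the zero frequency, which is forced by the normalization by $L$.
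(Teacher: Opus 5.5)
Your proposal is correct and follows essentially the route the paper indicates. The paper cites the lemma from Jutila and justifies it only through the Fourier-coefficient formula \eqref{eq:fourier-chi}: $\widehat\chi(0)=1$ and $\widehat\chi(l)\ll (Q/L)(1+\delta|l|)^{-10}d(|l|)\|w\|_\infty$ from the Ramanujan-sum expansion, after which Parseval and the second moment $\sum_{m\le M}d(m)^2\ll M\log^3 M$, with your dyadic treatment of the tail, give exactly $Q^2\|w\|_\infty^2|\log\delta|^3/(L^2\delta)$.
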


Indeed, the $l$-th Fourier coefficient of $\chi$ equals
\begin{equation}\label{eq:fourier-chi}
\frac1L\sum_{q} w(q) r_q(l)\hat\psi(\delta l)
\begin{cases}
= 1, & l=0,\\
\ll_\psi \dfrac{Q}{L}(1+\delta|l|)^{-10}d(|l|)\|w\|_\infty, & l\neq 0,
\end{cases}
\end{equation}
where $\hat\psi$ denotes the Fourier transform and $r_q(l)$ the Ramanujan sum.

We further derive the following useful representation for $\alpha =  b/c + z$ where $b,c\in\mathbb Z$, $c\neq 0$ and $z\in\mathbb R$.
Opening the Ramanujan sum and applying the Poisson summation formula, we have
\begin{equation}\label{chi-repr}
\begin{aligned}
\chi\Big(\frac bc + z\Big)
&= \frac1L\sum_{q} w(q)\sum_{d\mid q} d\mu\Big(\frac qd\Big)
\sum_l \hat\psi(\delta ld) e\Big(\Big(\frac bc + z\Big)l d\Big)\\
&= \frac{1}{\delta L}\sum_{q} w(q)\sum_{d\mid q}\mu\Big(\frac qd\Big)
\sum_{l\equiv bd\bmod c} \psi\Big(\frac1\delta\Big(\frac{l}{cd}+z\Big)\Big).
\end{aligned}
\end{equation}

The second delta symbol corresponds to a Kloosterman-type refinement of the circle method constructed in the style of Heath-Brown \cite[Section 3]{hb}, and further detailed arguments can be found in \cite[Lemma 2 and (2.5)]{Blomer-Li-2025}.

\begin{lemma}\label{Kloosterman circle method}
Let $C\ge1$ and $n\in \mathbb{Z}$.  Then, we have
\begin{equation*}
\delta_{n=0}=\sum_{c\le C}\;\sideset{}{^{*}}\sum_{b\bmod c}\int_{-\f{1}{c(c+c')}}^{\f{1}{c(c+c'')}}e\Big(\Big(\f{b}{c}+z\Big)n\Big)\,dz,
\end{equation*}
where  $b'/c'<b/c<b''/c''$ are consecutive terms in the Farey sequence of order $C$. Furthermore, put
\begin{equation*}	
I(c,z)=\begin{cases}
	\Big[-\min\Big(C,\frac1{c|z|}-c\Big),\,-(C-c)\Big), & z<0,\\
	\Big(C-c,\,\min\Big(C,\frac1{c|z|}-c\Big)\Big], & z> 0,
\end{cases}
\end{equation*}
for $z\ne0$. Then each Farey arc may be rewritten in the form
\begin{equation}\label{eq:refined-farey}
\int_{-1/(cC)}^{1/(cC)}\frac1c\sum_{u\bmod c}\;\sum_{t\in I(c,z)}e\Big(\f{u(t+\bar b)}{c}\Big)e\Big(\Big(\f{b}{c}+z\Big)n\Big)\,dz,
\end{equation}
where the value at $z=0$ is immaterial. In particular, for $|u|\le c/2$, we have
\begin{equation}\label{eq:farey-t-sum}
\sum_{t\in I(c,z)}e_c(ut)\ll \frac{c}{1+|u|}.
\end{equation}
\end{lemma}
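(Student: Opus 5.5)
The plan is to prove the identity by the Farey dissection of order $C$, and then to encode the endpoints of each Farey arc by a congruence condition on the neighbouring denominators.

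\emph{Step 1: the identity for $\delta_{n=0}$.} For $n\in\mathbb Z$ we have $\delta_{n=0}=\int_0^1 e(\alpha n)\,d\alpha$. Split $[0,1)$, viewed as a circle, into the Farey arcs attached to the fractions $b/c$ with $c\le C$ and $(b,c)=1$. The arc around $b/c$ runs between the mediants $(b'+b)/(c'+c)$ and $(b+b'')/(c+c'')$. Using $bc'-b'c=1$ and $b''c-bc''=1$, the distances from $b/c$ to these mediants are $1/(c(c+c'))$ and $1/(c(c+c''))$. Substituting $\alpha=b/c+z$ on each arc gives the first formula.

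\emph{Step 2: the refined form \eqref{eq:refined-farey}.} Since consecutive Farey denominators satisfy $c+c'>C$ and $c+c''>C$, every arc lies inside $|z|\le 1/(cC)$. The arc can therefore be written as an integral over $[-1/(cC),1/(cC)]$ against the indicator that $z$ lies in the arc.

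For $z>0$ the arc condition $z\le 1/(c(c+c''))$ is equivalent to $c''\le 1/(cz)-c$. The denominator $c''$ is characterized as the unique integer in $(C-c,C]$ with $c''\equiv-\bar b\bmod c$, which follows from $b''c-bc''=1$. Hence the indicator equals
\[
\#\big\{t\in(C-c,\min(C,\tfrac1{cz}-c)]:\ t\equiv-\bar b \bmod c\big\}.
\]
For $z<0$ the condition is $c'\le 1/(c|z|)-c$, where $c'$ is the unique element of $(C-c,C]$ with $c'\equiv\bar b\bmod c$. Setting $t=-c'$ gives again $t\equiv-\bar b$, now with $t$ ranging over $I(c,z)$ for $z<0$.

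In both cases the count is at most one, because the interval has length at most $c$. The congruence is detected by $\frac1c\sum_{u\bmod c}e(u(t+\bar b)/c)$, which yields \eqref{eq:refined-farey}. The point $z=0$ has measure zero, so its value is immaterial.

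\emph{Step 3: the bound \eqref{eq:farey-t-sum}.} The set $I(c,z)$ is an interval of integers of length at most $c$. For $u=0$ the trivial bound $c$ suffices. For $0<|u|\le c/2$, summing the geometric series gives
\[
\ll \min\big(c,\|u/c\|^{-1}\big)= c/|u|,
\]
so in all cases the sum is $\ll c/(1+|u|)$.

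The only delicate point is the bookkeeping in Step 2. One must get the signs of the congruences $c'\equiv\bar b$ and $c''\equiv-\bar b$ right, and check that the half-open endpoints of $I(c,z)$ match the uniqueness of $c'$ and $c''$ in $(C-c,C]$. I would verify this directly from the two determinant relations $bc'-b'c=1$ and $b''c-bc''=1$.
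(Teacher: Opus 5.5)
Your proof is correct, and it is essentially the standard Kloosterman-refinement derivation. The paper does not prove this lemma itself but defers to Heath-Brown (Section 3) and Blomer--Li (Lemma 2 and (2.5)), where the arc endpoints are encoded exactly as you do: $c''\equiv-\bar b$ and $c'\equiv\bar b \pmod c$ as the unique such integers in $(C-c,C]$, detected by additive characters modulo $c$, and followed by the geometric-series bound for the $t$-sum.
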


\subsection{Preliminaries on ${\rm SL}(3,\mathbb{Z})$ Maass forms}
We shall recall some basic facts about ${\rm SL}(3,\mathbb{Z})$ automorphic forms.  Suppose that $\pi$ is a Maass form of type $(\nu_1,\nu_2)$ for ${\rm SL}(3,\mathbb{Z})$ which is an eigenfunction of all the Hecke operators with Fourier coefficients $\lambda_\pi(n_1,n_2)$, normalized so that $\lambda_\pi(1,1)=1$ (for details see Goldfeld’s book \cite{goldfeld}). We introduce the Langlands parameters $(\alpha_1,\alpha_2,\alpha_3)$ defined by 
$$\alpha_1=-\nu_1-2\nu_2+1,\quad  \alpha_2=-\nu_1+\nu_2\quad \text{ and } \quad\alpha_3=2\nu_1+\nu_2-1.$$
The Ramanujan--Selberg conjecture predicts that $|\Re(\alpha_i)|=0$, and from the work of
Jacquet and Shalika \cite{js}, we  know that $|\Re(\alpha_i)|\leq 1/2.$ Let $g$ be a smooth function compactly supported  on $(0,\infty)$.  For $\sigma>-1+\max\{-\Re(\alpha_1),-\Re(\alpha_2), -\Re(\alpha_3)\}$, we define
\begin{equation*}
G^{\pm}(x)=\frac1{4\pi^{{5}/{2}} i}\int_{\Re s=\sigma}\tilde{g}(-s)(\pi^3x)^{-s}
\left(\prod_{j=1}^3\frac{\Gamma\Big(\frac{1+s+\alpha_j}{2}\Big)}{\Gamma\Big(\frac{-s-{\alpha}_j}{2}\Big)}\mp i \prod_{j=1}^3\frac{\Gamma\Big(\frac{2+s+\alpha_j}{2}\Big)}{\Gamma\Big(\frac{1-s-\alpha_j}{2}\Big)}\right)ds.
\end{equation*}
Let $$S(a,b,c)=\sideset{}{^{*}}\sum_{t\bmod c}e\Big(\f{at+b\bar{t}}{c}\Big)$$ be the Kloosterman sum, where $\bar{t}$ denotes the multiplicative inverse of $t\bmod{c}$.

The  following Voronoi summation formula (see \cite{GL} or \cite{MS}) will play an important role in our analysis.

\begin{lemma}\label{voronoi3}
Let $g$ be a smooth function compactly supported on $(0,\infty)$. For $(a,q)=1$, we have
\begin{equation}\label{vo3}
\begin{aligned}
\sum_{n=1}^\infty& \lambda_\pi(1,n)e_q({{a}n})g(n)=q\sum_{\pm}\sum_{n_1|q}\sum_{n_2=1}^\infty \frac{\lambda_\pi(n_2,n_1)}{n_1n_2}S\Big(\bar{a}, \pm n_2, \f{q}{n_1}\Big)G^{\pm}\Big(\frac{n_1^2n_2}{q^3}\Big),
\end{aligned}
\end{equation}
where $\bar{a}$ denotes the multiplicative inverse of $a\bmod{q}$, and $S(\bar{a}, \pm n_2, q/n_1)$ is the Kloosterman sum. 
\end{lemma}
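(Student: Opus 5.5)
The plan is to derive the formula from a functional equation for the additively twisted Dirichlet series, and then pass to the smooth weight $g$ by Mellin inversion. This follows the approach of Goldfeld--Li \cite{GL}; the formula itself is also obtained by Miller--Schmid \cite{MS}.

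\emph{Step 1: twisted series and Mellin inversion.} For $(a,q)=1$ put
\[
L_q(s,a)=\sum_{n\ge1}\lambda_\pi(1,n)e_q(an)n^{-s},\qquad \Re s>1 .
\]
Writing $g(x)=\frac1{2\pi i}\int_{(\sigma)}\tilde g(s)x^{-s}\,ds$ gives
\[
\sum_{n}\lambda_\pi(1,n)e_q(an)g(n)=\frac1{2\pi i}\int_{(\sigma)}\tilde g(s)L_q(s,a)\,ds ,
\]
with $\sigma>1$. Absolute convergence is guaranteed by the Rankin--Selberg bound $\sum_{n\le x}|\lambda_\pi(1,n)|^2\ll x$.

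\emph{Step 2: functional equation for $L_q(s,a)$.} This is the heart of the argument. I would prove that $L_q(s,a)$ continues to an entire function and satisfies
\[
L_q(s,a)=q^{\,3(\frac12-s)}\cdots\sum_{\pm}\gamma_\pm(s)\sum_{n_1\mid q}\sum_{n_2\ge1}\frac{\lambda_\pi(n_2,n_1)}{n_1n_2}\,S\Big(\bar a,\pm n_2,\frac q{n_1}\Big)\Big(\frac{n_1^2n_2}{q^3}\Big)^{s-1}.
\]
Here $\gamma_\pm$ are the ratios of products of $\Gamma\big(\frac{1+s+\alpha_j}2\big)$ and $\Gamma\big(\frac{-s-\alpha_j}{2}\big)$, and their shifts by $\frac12$, that appear in $G^\pm$. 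These correspond to the even and odd parts, which are separated by combining the twists by $e_q(an)$ and $e_q(-an)$.

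Following Goldfeld--Li, I would prove this from the automorphy of the Maass form $F$ under an element of $\mathrm{SL}(3,\mathbb Z)$ whose lower-left block involves $q$ and $a$, namely the $\mathrm{SL}(3)$ analogue of $\begin{pmatrix}a&b\\ q&-\bar a\end{pmatrix}$. One then integrates $F$ against additive characters along the unipotent radical on both sides. On one side the Fourier--Whittaker expansion produces the coefficients $\lambda_\pi(1,n)e_q(an)$. On the transformed side the expansion runs over the cosets $\Gamma_\infty\backslash\mathrm{SL}(2,\mathbb Z)$ embedded in $\mathrm{SL}(3)$. Collecting these terms produces the divisor sum over $n_1\mid q$, the coefficients $\lambda_\pi(n_2,n_1)$ (the $n_1$ appears through $(q,n)$-type denominators), and the Kloosterman sums $S(\bar a,\pm n_2,q/n_1)$ after summing over residues. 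Mellin-transforming the resulting identity of Whittaker integrals then yields the gamma ratios $\gamma_\pm$.

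An alternative route is to expand $e_q(an)$ into Dirichlet characters modulo divisors of $q$. One would then use the functional equations of $L(s,\pi\otimes\chi)$ for primitive $\chi$, together with the Hecke relations to handle imprimitive characters and the common factors $(n,q)$. Reassembling the Gauss sums into Kloosterman sums should reproduce the same right-hand side. The bookkeeping for non-squarefree $q$ is heavier on this route.

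\emph{Step 3: contour shift.} I would move the line of integration to $\Re s=\sigma'<0$, which is allowed because $L_q$ is entire and $\tilde g$ decays rapidly in vertical strips. Applying the functional equation and substituting $s\mapsto -s$ then gives absolutely convergent dual series. Exchanging summation and integration and matching the normalizations ($\pi^{-3s}$, the factor $\frac1{4\pi^{5/2}i}$, and the powers of $q$) identifies the resulting kernel with $G^\pm(n_1^2n_2/q^3)$ with the prefactor $q$, which is exactly \eqref{vo3}. The constraint $\sigma>-1+\max_j(-\Re\alpha_j)$ in the definition of $G^\pm$ is precisely what keeps the line to the right of the poles of the gamma ratios.

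\emph{Main obstacle.} I expect the hardest step to be Step 2 for general (non-prime, non-squarefree) $q$. There the correct divisor structure $n_1\mid q$ with coefficients $\lambda_\pi(n_2,n_1)$ and moduli $q/n_1$ must emerge, whether from the $\mathrm{SL}(3)$ Bruhat decomposition or from the character bookkeeping. The analytic continuation, the gamma factors and the contour shift are then routine.
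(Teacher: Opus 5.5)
The paper gives no proof of this lemma; it quotes it from Goldfeld--Li \cite{GL} and Miller--Schmid \cite{MS}. Your frame (Mellin inversion, a functional equation for $L_q(s,a)$, contour shift) is the usual way to package the result. Its arithmetic heart, Step 2, is exactly the input those references provide, and you likewise defer it to them rather than prove it. So in substance your treatment coincides with the paper's.

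Two points in your sketch should be corrected.

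First, the displayed functional equation has the wrong exponent on the dual side. With the coefficient $\lambda_\pi(n_2,n_1)/(n_1n_2)$ in front, and with your Mellin conventions, \eqref{vo3} is equivalent to
\begin{equation*}
L_q(s,a)=\frac{q\,\pi^{3s}}{2\pi^{3/2}}\sum_{\pm}\Gamma^{\pm}(-s)\sum_{n_1\mid q}\sum_{n_2\ge1}\frac{\lambda_\pi(n_2,n_1)}{n_1n_2}\,S\Big(\bar a,\pm n_2,\frac{q}{n_1}\Big)\Big(\frac{n_1^2n_2}{q^3}\Big)^{s},\qquad \Re s<0,
\end{equation*}
where $\Gamma^{\pm}(s)$ denotes the bracketed combination of gamma ratios in the definition of $G^{\pm}$. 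The case $q=1$ makes the error visible. It must reduce to the standard functional equation relating $L(s,\pi)$ to $\sum_{n_2}\lambda_\pi(n_2,1)n_2^{s-1}=L(1-s,\tilde\pi)$, which forces the exponent $s$ rather than $s-1$. With $s-1$, your Step 3 would produce the kernel $x^{-1}G^{\pm}(x)$ instead of $G^{\pm}(x)$. Likewise, the total $q$-dependence must be $q^{1-3s}$, so the prefactor $q^{3(1/2-s)}$ is not consistent with the $q^3$ inside the bracket.

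Second, entireness of $L_q(s,a)$ together with rapid decay of $\tilde g$ does not by itself justify moving the contour. You also need polynomial growth of $L_q(s,a)$ in vertical strips. This follows from Phragm\'en--Lindel\"of once finite order is known, for example from the integral representation. The line $\Re s=\sigma_0>1$ is controlled by absolute convergence, and the left line by the functional equation together with Stirling's formula for $\Gamma^{\pm}(-s)$.
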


\begin{rem}\label{rem-Gz}
If $g$ is supported on $[X, 2X]$, satisfying $x^j g^{(j)}(x) \ll_j 1$, then the sums on the right-hand side of \eqref{vo3} are essentially supported on $|n_1^2 n_2| \ll q^3 (qX)^\varepsilon / X$. The contribution from the terms with	$|n_1^2 n_2| \gg q^3 (qX)^\varepsilon / X$ is negligibly small. This follows by estimating the integral $G^{\pm}(y)$ by shifting the contour to the right. For smaller values of $n_1^2 n_2$, we shift the contour to the left up to $\sigma = -1/2+\ee$ (using the result of Jacquet and Shalika) to obtain
$$G^{\pm}\Big( \frac{n_1^2 n_2}{q^3} \Big) \ll X^{\ee}\Big(\frac{X n_1^2 n_2}{q^3}\Big)^{1/2}. $$
\end{rem}

Since the Ramanujan--Selberg conjecture for $\pi$ is still open, we resort to alternative estimates. The following lemma is well known.
\begin{lemma}\label{R-S}
We have 
$$\sum_{n\leq X}|\lambda_\pi(1,n)|^2\ll_{\pi,\ee} X^{1+\ee}.$$
\end{lemma}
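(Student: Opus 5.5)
The plan is the standard Rankin--Selberg route: obtain the bound from analytic properties of the Rankin--Selberg $L$-function $L(s,\pi\times\tilde\pi)$, applied to the coefficients $\lambda_\pi(1,n)$. Since the summand is nonnegative, the sharp cutoff is harmless. I will bound the sum by $e\sum_{n\ge1}|\lambda_\pi(1,n)|^2 n^{-\sigma}X^{\sigma}$ with $\sigma=1+1/\log X$. This reduces the problem to showing that the Dirichlet series $\sum_n |\lambda_\pi(1,n)|^2 n^{-s}$ satisfies
\[
\sum_n |\lambda_\pi(1,n)|^2 n^{-\sigma}\ll (\sigma-1)^{-1}\quad\text{as } \sigma\to1^+ ,
\]
which gives the bound with a $\log X$ in place of $X^{\ee}$. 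That is stronger than the statement.

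For the key input I compare $\sum_n|\lambda_\pi(1,n)|^2n^{-s}$ with the Rankin--Selberg $L$-function $L(s,\pi\times\tilde\pi)$. The Dirichlet coefficients of $L(s,\pi\times\tilde\pi)$ are $\sum_{m^3 n = k}|\lambda_\pi(m,n)|^2$ up to a $\zeta(3s)$ factor. This follows from unfolding against a $\mathrm{GL}(3)$ Eisenstein series, as in Goldfeld's book \cite{goldfeld}. In particular,
\[
\sum_{n}|\lambda_\pi(1,n)|^2 n^{-\sigma}\le \zeta(3\sigma)^{-1}\cdots\le L(\sigma,\pi\times\tilde\pi)
\]
termwise, using nonnegativity of all the coefficients. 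The analytic input, from Jacquet--Piatetski-Shapiro--Shalika, is that $L(s,\pi\times\tilde\pi)$ converges absolutely for $\Re s>1$ and continues meromorphically. Its only pole near $\Re s=1$ is a simple pole at $s=1$. Hence $L(\sigma,\pi\times\tilde\pi)\ll(\sigma-1)^{-1}$, and this completes the argument.

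The main obstacle is justifying the termwise comparison, that is, the identity for the Rankin--Selberg coefficients together with their nonnegativity. The unramified local factors at each $p$ are $\prod_{i,j}(1-\alpha_i\bar\alpha_jp^{-s})^{-1}$, and one has to check that the $p$-power coefficients dominate $|\lambda_\pi(1,p^k)|^2$. If this is awkward, I would fall back on citing the statement as standard, since the paper itself says it is well known. An alternative route gives only the weaker $X^{1+\ee}$: a Perron formula together with the convexity bound for $L(s,\pi\times\tilde\pi)$. That route already suffices here.
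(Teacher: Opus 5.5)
Your argument is correct and is the standard Rankin--Selberg proof the paper has in mind; the paper gives no proof and simply calls the lemma well known. You dominate the sum by $X^{\sigma}\sum_n|\lambda_\pi(1,n)|^2n^{-\sigma}$ with $\sigma=1+1/\log X$, compare termwise with $L(\sigma,\pi\times\tilde\pi)$, and use the simple pole at $s=1$; this gives $X\log X$. One small correction: in Goldfeld's normalization the Dirichlet coefficients are $\sum_{m_1^2m_2=k}|\lambda_\pi(m_1,m_2)|^2$ (weight $(m_1^2m_2)^{-s}$, not $m^3n$). This is immaterial because you only use the $m_1=1$ terms, and given that identity the nonnegativity you flag as the main obstacle is automatic.
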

Throughout the proof of our main theorem, we shall make extensive use of several summatory bounds for the Fourier coefficients $\lambda_\pi(n_1,n_2)$ associated with a cuspidal $\mathrm{GL}(3)$ automorphic representation $\pi$. All such bounds are deduced from the fundamental multiplicative identity satisfied by the Hecke eigenvalues of $\pi$,
\begin{align*}
\lambda_\pi(n_1,n_2) = \sum_{d\mid (n_1,n_2)} \mu(d) \lambda_\pi\Big(\frac{n_1}{d},1\Big) \lambda_\pi\Big(1,\frac{n_2}{d}\Big),
\end{align*}
in conjunction with classical Rankin--Selberg convolution theory (Lemma \ref{R-S}) and standard bounds that are available unconditionally without invoking the full Ramanujan conjecture for $\mathrm{GL}(3)$. More precisely, for all real parameters $X,Y\geq 1$ and any arbitrarily small positive real number $\varepsilon>0$, the following summation estimates hold uniformly
\begin{align*}
 \sum_{n\leq X}\sum_{m\leq Y} |\lambda_\pi(n,m)| \ll (XY)^{1+\varepsilon}, \qquad \sum_{m\leq Y} |\lambda_\pi(n,m)| \ll n^{1/2+\ee}Y^{1+\varepsilon}.
\end{align*}
Moreover, we deduce the following estimates
\begin{align}\label{lem:hecke-moments}
 \sum_{n\leq X}\sum_{m\leq Y} \f{|\lambda_\pi(n,m)|^2}{nm} \ll (XY)^{\varepsilon}, \quad \sum_{n\leq X} \f{|\lambda_\pi(n,m)|}{\sqrt n} \ll (Xm)^{1/2+\varepsilon}.
\end{align}

\subsection{The Voronoi summation formula for the divisor function}

The Voronoi summation formula is a foundational instrument in the analysis of shifted convolution sums, and it plays an important role throughout our work. For completeness, we state here the Voronoi summation formula for the divisor function.

\begin{lemma}\label{Voronoi-d(n)}
Let  $(b,c)=1$, and  $\omega$ be a smooth function  compactly supported on $(0,\infty)$. We have
\begin{align*}
\sum_{n}d(n)e\Big(\frac{bn}{c}\Big)\omega(n)
=&\frac1c\int_0^\infty \omega(\xi)\Big(\log\frac{\xi}{c^2}+2\gamma\Big)d\xi\\
&+\frac1c\sum_{\pm}\sum_{n}d(n)e\Big(\pm\frac{\overline{b}n}{c}\Big)\int_0^\infty \omega(\xi)J^\pm\Big(\frac{\sqrt{n\xi}}{c}\Big)d\xi,
\end{align*}
where $$J^{-}(\xi)=\sum_{\pm}e(\pm 2\xi)v_\pm(\xi), J^{+}(\xi)=v_0(\xi),$$ 
with
\begin{align*}
\xi^j v_\pm^{(j)}(\xi)\ll_j \frac{1+|\log\xi|}{1+\xi^{1/2}},\;v_0(\xi)\ll_A \frac{1+|\log\xi|}{1+\xi^{A}}
\end{align*}
for any $j,A\ge 0$.
\end{lemma}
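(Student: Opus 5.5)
This is the standard Voronoi formula for $d(n)$; I would derive it from the functional equation of the Estermann zeta function
\begin{align*}
D(s,b/c)=\sum_{n\ge1}d(n)e\Big(\frac{bn}{c}\Big)n^{-s},
\end{align*}
via Mellin inversion. Write $\tilde\omega(s)=\int_0^\infty\omega(\xi)\xi^{s-1}d\xi$, which is entire and rapidly decaying in vertical strips since $\omega$ is smooth and compactly supported. Then, for $\sigma>1$,
\begin{align*}
\sum_n d(n)e(bn/c)\omega(n)=\frac{1}{2\pi i}\int_{(\sigma)}\tilde\omega(s)D(s,b/c)\,ds .
\end{align*}
The arithmetic input is the decomposition $D(s,b/c)=c^{-2s}\sum_{\alpha,\beta\bmod c}e(b\alpha\beta/c)\zeta(s,\alpha/c)\zeta(s,\beta/c)$ into Hurwitz zeta functions. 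From it one gets the meromorphic continuation, with a unique double pole at $s=1$, and the functional equation. The latter takes the form
\begin{align*}
D(s,b/c)=2(2\pi)^{2s-2}c^{1-2s}\Gamma(1-s)^2\Big[D(1-s,\bar b/c)-\cos(\pi s)\,D(1-s,-\bar b/c)\Big].
\end{align*}
It follows by applying Hurwitz's formula to each factor and evaluating the resulting sum over $\alpha,\beta$; this is where $\bar b$ appears, through $\sum_{\alpha}e(\alpha(b\beta\pm m)/c)$ forcing $\beta\equiv\mp\bar b m$.

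Shift the contour to $\Re s=-\eta<0$ and pick up the residue at $s=1$. The Laurent expansion is
\begin{align*}
D(s,b/c)=\frac{1}{c}\Big[\frac{1}{(s-1)^2}+\frac{2\gamma-2\log c}{s-1}\Big]+\cdots ,
\end{align*}
which can be checked by the Hurwitz decomposition or by comparison with $\zeta(s)^2$ averaged over additive characters. Its residue against $\tilde\omega(s)$ is $\frac1c\big(\tilde\omega'(1)+(2\gamma-2\log c)\tilde\omega(1)\big)$. This equals $\frac1c\int\omega(\xi)(\log(\xi/c^2)+2\gamma)\,d\xi$, which is the main term.

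On $\Re s=-\eta$, insert the functional equation and expand $D(1-s,\pm\bar b/c)$ as an absolutely convergent Dirichlet series. Interchanging sum and integral gives
\begin{align*}
\frac1c\sum_\pm\sum_n d(n)e(\pm\bar bn/c)\int_0^\infty\omega(\xi)J^\pm\Big(\frac{\sqrt{n\xi}}{c}\Big)d\xi .
\end{align*}
Here $J^\pm$ are Mellin--Barnes integrals of $\Gamma(1-s)^2$ and $\Gamma(1-s)^2\cos\pi s$. Evaluating them gives the classical Bessel kernels: $J^+$ is a multiple of $K_0(4\pi x)$, and $J^-$ is a combination of $Y_0(4\pi x)$ (equivalently $J_0,Y_0$). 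The stated notation has $e(\pm2\xi)$ in $J^-$, matching $\xi=\sqrt{n\xi'}/c$ with $4\pi$ scaling, so the sign and label conventions should be aligned with the cited normalization.

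The bounds are read off from the asymptotics of Bessel functions. $K_0(x)$ behaves like $|\log x|$ near $0$ and decays exponentially, which gives the bound for $v_0$ for every $A$. $Y_0$ and $J_0$ are $O(1+|\log x|)$ near $0$, and for large $x$ are $x^{-1/2}e^{\pm ix}$ times symbols of order $0$, which gives $v_\pm$ with the stated derivative bounds. The hard step is the functional equation itself, namely tracking the twist that turns $b$ into $\pm\bar b$. Everything else (the residue and the kernel bounds) is routine bookkeeping. Since the lemma is classical, in the paper it suffices to cite Iwaniec--Kowalski, Thm.\ 4.10, or Jutila's lectures and verify the normalization against this outline.
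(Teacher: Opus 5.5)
Your outline is correct, and it is the standard derivation behind this lemma. The paper gives no proof at all: it simply records the classical formula (the one in Iwaniec--Kowalski, Thm.~4.10, as used by Blomer--Li). So your Estermann-function argument supplies what the paper takes for granted, and your closing remark about citing it is exactly what the paper does.

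I checked your key steps and they hold:
\begin{itemize}
\item The functional equation, including the sign structure coming from the Hurwitz expansion, is right.
\item The Laurent coefficients $\frac1c\big((s-1)^{-2}+(2\gamma-2\log c)(s-1)^{-1}\big)$ are right, and the residue reproduces the stated main term.
\item The labels match the statement. The $\Gamma(1-s)^2$ term pairs with $D(1-s,\bar b/c)$ and gives $J^+(x)=4K_0(4\pi x)$ with phase $e(\bar bn/c)$. The $\cos(\pi s)$ term pairs with $D(1-s,-\bar b/c)$ and gives $J^-(x)=-2\pi Y_0(4\pi x)$.
\end{itemize}

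For the splitting $J^-=\sum_\pm e(\pm2x)v_\pm(x)$, it is cleaner to use $Y_0=(H_0^{(1)}-H_0^{(2)})/(2i)$ than to speak of ``$J_0,Y_0$''. Each Hankel function is $x^{-1/2}e^{\pm i(4\pi x-\pi/4)}$ times a symbol of order $0$ for large $x$, and has only a logarithmic singularity at $0$. This gives the stated derivative bounds directly, with no gluing.

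The one step you gloss over is the identification of the dual kernels. On $\Re s=-\eta$ the bare Mellin--Barnes integral of $\Gamma(1-s)^2\cos(\pi s)y^{s-1}$ is not absolutely convergent; its integrand grows like $|t|^{1+2\eta}$. So ``evaluating'' it as $-2\pi Y_0$ must be done with $\tilde\omega(s)$ kept inside. Two routine ways:
\begin{itemize}
\item Shift each $n$-term to $\tfrac14<\Re s<1$, where no poles are crossed, and apply Mellin--Parseval there.
\item Insert a convergence factor such as $e^{\varepsilon s^2}$ and let $\varepsilon\to0$.
\end{itemize}
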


For future use throughout our arguments, we first analyze the integral transform under the following special setting. Let $\omega$ be a smooth function compactly supported on $[X, 2X]$, satisfying $x^j \omega^{(j)}(x) \ll_j 1$.
An application of the method of stationary phase yields the bound
\begin{align}\label{eq:divisor-minus-transform}
\int_{0}^{\infty} \omega(\xi)e(z\xi)J^{\pm}\Big(\frac{\sqrt{n\xi}}{c}\Big)d\xi&\ll_{A,\varepsilon}
\frac{X(Xnc)^{\varepsilon}}{1+X|z|}\Big(1+\frac{nX}{c^{2}(1+X|z|)^{2}}\Big)^{-A},
\end{align}
with detailed discussions available in \cite[Lemma 4]{Blomer-Li-2025}, for example.

\subsection{Analysis of character sums}

The delta symbol method and the Voronoi summation formula give rise to a variety of character sums. It is essential to establish square-root cancellation for these sums, uniform with respect to several auxiliary parameters, at least in generic cases.
We first record the two estimates from Blomer--Li \cite{Blomer-Li-2025}. 

For each divisor $n_1\mid c$, let $q=c/n_1$. We define
\begin{align}	\label{eq:basic-character-sum}
S_{h,d,n_1,n_2}(c)=\sideset{}{^{*}}\sum_{b\bmod c}e_c(hb+\bar bd)S(\bar b,n_2;q).
\end{align}
By virtue of Weil’s bound for Kloosterman sums and the bounds of Adolphson and Sperber \cite{AS} based on Newton polygons, we obtain the following lemma. 

\begin{lemma}\label{lem:blomer-li-pointwise}
Let $c=c_{\mathrm{sf}}c_{\mathrm{sq}}$ with $c_{\mathrm{sf}}$  squarefree, $c_{\mathrm{sq}}$  squarefull and $(c_{\mathrm{sf}},c_{\mathrm{sq}})=1$. We have
\begin{align*}	
S_{h,d,n_1,n_2}(c)\ll c^{1+\ee}c_{\mathrm{sq}}^{1/2}\Big(\frac{(n_1,c_{\mathrm{sf}},d,h)}{n_1}\Big)^{1/2}\ll c^{1+\ee}c_{\mathrm{sq}}^{1/2}.
\end{align*}
\end{lemma}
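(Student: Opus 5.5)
We would prove Lemma \ref{lem:blomer-li-pointwise} by first using twisted multiplicativity to reduce to prime-power moduli, and then treating squarefree primes and squarefull prime powers separately. Write $c=\prod p^{\alpha}$. By the Chinese remainder theorem, $b\bmod c$ factors as a product over prime powers, and so do the Kloosterman sum $S(\bar b,n_2;q)$ with $q=c/n_1$ and the additive characters $e_c(hb+\bar b d)$. Each local factor is a sum of the same shape, with $h,d,n_2$ multiplied by units (inverses of the complementary moduli) that do not affect the bounds. Hence $S_{h,d,n_1,n_2}(c)$ is a product of local sums $S_{p^\alpha}$ with $n_1$ replaced by $p^{\beta}\| n_1$, and it suffices to show
\begin{align*}
S_{p}\ll p\Big(\frac{(p^{\beta},p,d,h)}{p^{\beta}}\Big)^{1/2}\quad(\alpha=1),\qquad S_{p^\alpha}\ll_\ee p^{\alpha(3/2+\ee)}\quad(\alpha\ge2),
\end{align*}
with absolute implied constants. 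The factor $c^{\ee}$ absorbs the constants $C^{\omega(c)}$.

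For squarefull prime powers a trivial estimate is enough. We bound the inner Kloosterman sum by Weil's bound, $S(\bar b,n_2;p^{\alpha-\beta})\ll d(p^{\alpha-\beta})\,p^{(\alpha-\beta)/2}(n_2,p^{\alpha-\beta})^{1/2}$. Where this is not sharp, we use the trivial bound $p^{\alpha-\beta}$. Summing over $b$ gives at most $p^{\alpha}\cdot p^{\alpha/2}$ whenever the gcd factor is controlled. If $(n_2,p^{\alpha-\beta})$ is large, we instead open the Kloosterman sum and execute the $b$-sum first, which yields a Ramanujan- or Kloosterman-type sum of size at most $p^{\alpha}$ times $p^{(\alpha-\beta)/2}$. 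Either way the local bound is $p^{3\alpha/2+\ee}=p^{\alpha}\cdot p^{\alpha/2}$, which is exactly the $c_{\mathrm{sq}}^{1/2}$ contribution.

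The squarefree primes are the main step. If $\beta=1$, then $q$ is coprime to $p$, the local Kloosterman factor is trivial, and $S_p$ reduces to the Kloosterman sum $S(h,d;p)$. Weil gives $\ll p^{1/2}(h,d,p)^{1/2}$, which is $p\,(p,d,h)^{1/2}/p^{1/2}$ as required. If $\beta=0$, then opening the Kloosterman sum gives the two-variable sum
\begin{align*}
\sideset{}{^{*}}\sum_{b,t\bmod p}e_p\big(hb+d\bar b+\bar b\,\bar t'+n_2 t\big),
\end{align*}
with $t'$ a unit twist. This is an exponential sum on the torus $\mathbb{G}_m^2$ for a Laurent polynomial in $(b,t)$. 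We need square-root cancellation in two variables, that is, the bound $\ll p$. Here we would invoke Adolphson--Sperber: for a Laurent polynomial that is nondegenerate with respect to its Newton polyhedron $\Delta$ at infinity, the sum is $\ll 2!\,\mathrm{vol}(\Delta)\,p^{2/2}$. The main obstacle is verifying this nondegeneracy uniformly in $h,d,n_2$, including degenerate cases such as $p\mid h$, $p\mid d$ or $p\mid n_2$. In those cases the Newton polygon changes, and we would handle each one directly. For instance, when $p\mid n_2$ the Kloosterman sum becomes a Ramanujan sum, and the $b$-sum is then a Kloosterman sum, which we bound by Weil again. Each degenerate case still gives $\ll p$ with a bounded constant. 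Multiplying the local bounds proves the first inequality, and the second inequality follows because $(n_1,c_{\mathrm{sf}},d,h)\le n_1$.
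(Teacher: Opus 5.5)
Your route is the one the paper relies on by citing Blomer--Li: Chinese remaindering to prime powers, Weil at the primes dividing $n_1$ or $c_{\mathrm{sq}}$, and Adolphson--Sperber for the two-variable sum at primes $p\Vert c$ with $p\nmid n_1$. Your squarefree analysis works. In the generic case the Newton polygon has the origin in its interior and no edge restriction has a critical point on the torus. The cases $p\mid h$, $p\mid d$, $p\mid n_2$ reduce to Ramanujan, Kloosterman or $\mathrm{Kl}_3$-type sums, all $\ll p$.

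There is, however, a gap at the squarefull primes. The denominator in the first inequality is all of $n_1=(n_1,c_{\mathrm{sf}})(n_1,c_{\mathrm{sq}})$, not just $(n_1,c_{\mathrm{sf}})$. So at $p^\alpha\Vert c_{\mathrm{sq}}$ with $p^\beta\Vert n_1$, the required local bound is $p^{3\alpha/2-\beta/2}$ up to $p^{\alpha\ee}$, not the $p^{3\alpha/2}$ you aim for. Multiplying the local bounds you state gives only
\begin{align*}
S_{h,d,n_1,n_2}(c)\ll c^{1+\ee}c_{\mathrm{sq}}^{1/2}\Big(\frac{(n_1,c_{\mathrm{sf}},d,h)}{(n_1,c_{\mathrm{sf}})}\Big)^{1/2},
\end{align*}
which is weaker than the claim by $(n_1,c_{\mathrm{sq}})^{1/2}$. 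This matters downstream: in the large-$H$ bound for $S_1$, the factor $n_1^{-1/2}$ is paired with the $\mathrm{GL}(3)$ dual length to produce the $1/H$.

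The fix is immediate and removes your case split. The Weil--Estermann bound reads $|S(m,n;p^k)|\le\tau(p^k)p^{k/2}(m,n,p^k)^{1/2}$. Here the first argument $m=\bar b$ is a unit, so $(\bar b,n_2,p^{\alpha-\beta})=1$ whatever $n_2$ is. Hence there is no ``large $(n_2,p^{\alpha-\beta})$'' case, and the detour through opening the Kloosterman sum is unnecessary. Summing trivially over the $\varphi(p^\alpha)$ residues $b$ gives
\[
|S_{p^\alpha}|\le\tau(p^{\alpha-\beta})\,p^{\alpha}p^{(\alpha-\beta)/2}.
\]
This is exactly the local factor $p^\alpha\cdot p^{\alpha/2}\cdot p^{-\beta/2}$ of the claimed bound. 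It includes the case $\alpha=\beta$, where the local sum is $S(h,d;p^\alpha)\ll p^\alpha$. With this correction your product of local factors proves both inequalities, the divisor factors being absorbed into $c^{\ee}$.
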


This is Lemma 7 of Blomer and Li \cite{Blomer-Li-2025}. The improvement in the present paper does not strengthen the pointwise estimate in Lemma \ref{lem:blomer-li-pointwise}; rather, it changes the order in which the nonzero Poisson frequencies and the auxiliary variables are assembled before the quadratic estimate is applied.

For the treatment of $S_1$ (see Section \ref{sec-S1-BL}),  we further define
\begin{align*}	
T(h,d_1,d_2,n_1,n_2,p_1,p_2,t)=\sum_{x\bmod [p_1,p_2]t}e_{[p_1,p_2]t}(xn_2)S_{h,d_1,n_1,x}(p_1t)\overline{S_{h,d_2,n_1,x}(p_2t)},
\end{align*}
for two primes  $p_1, p_2$ and $n_1|t$. The following lemma is due to  \cite[Lemma 8]{Blomer-Li-2025}.

\begin{lemma}\label{lem:blomer-li-correlation}
Let $p_1, p_2$ be two primes, $n_1|t=t_{\mathrm{sf}}t_{\mathrm{sq}}$ with $t_{\mathrm{sf}}$ squarefree, $t_{\mathrm{sq}}$ squarefull and $(t_{\mathrm{sf}},t_{\mathrm{sq}})=1$.	Assume $(hn_1,p_1p_2)=1$. Then we have
\begin{align*}
p_1\ne p_2:\quad T&\ll p_1^{3/2}p_2^{3/2}t^{5/2+\ee}(hn_2,t_{\mathrm{sf}})^{1/2}t_{\mathrm{sq}}^{1/2},\\
p_1=p_2:\quad T&\ll p_1^3t^{5/2+\ee}(hn_2p_1,t_{\mathrm{sf}})^{1/2}t_{\mathrm{sq}}^{1/2},\\
p_1=p_2,\ n_2=0:\quad T&\ll p_1^2t^{3+\ee}(d_1-d_2,p_1).
\end{align*}
If $p_1\ne p_2$, then $T$ vanishes unless $(n_2,p_1p_2)=1$.
\end{lemma}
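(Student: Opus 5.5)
The plan is to open both factors $S_{h,d_i,n_1,x}(p_it)$ using \eqref{eq:basic-character-sum} and to carry out the $x$-summation first, since $x$ appears linearly. Write $q_i=p_it/n_1$ and open the Kloosterman sums as $S(\bar b_i,x;q_i)=\sum^*_{y_i\bmod q_i}e_{q_i}(\bar b_iy_i+x\bar y_i)$. The phase in $x$ is then
\[
e_{[p_1,p_2]t}\big(x(n_2+\text{linear combination of }\bar y_1,\bar y_2)\big),
\]
so orthogonality over $x\bmod [p_1,p_2]t$ produces a congruence linking $n_2$, $\bar y_1$ and $\bar y_2$, together with a factor $[p_1,p_2]t$. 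This leaves a complete sum over $b_1,b_2,y_1,y_2$ subject to one linear congruence.

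Next I would factor this sum by the Chinese remainder theorem into a part modulo $t$ and parts at $p_1$ and $p_2$. In the generic case $(p_1p_2,t)=1$ this is clean. If $p_i\mid t$, I would absorb $p_i$ into $t$ and argue as in the squarefull part, so only harmless factors $p_i^{1/2}$ are lost.

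\emph{The $t$-part.} I would split $t=t_{\mathrm{sf}}t_{\mathrm{sq}}$ and handle the two pieces separately.
\begin{itemize}
\item For $t_{\mathrm{sq}}$, it suffices to take the trivial bound combined with Lemma \ref{lem:blomer-li-pointwise} for each factor. This costs exactly the extra $t_{\mathrm{sq}}^{1/2}$.
\item For each prime $\ell\mid t_{\mathrm{sf}}$, the sum becomes an explicit complete sum in a few variables over $\mathbb F_\ell$. After solving the congruence, it is a correlation of two Kloosterman-type sums in $\bar b_1,\bar b_2$. Deligne/Adolphson--Sperber type bounds then give square-root cancellation in each of the remaining free variables, except when the phase degenerates. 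Degeneration happens precisely when $\ell\mid hn_2$, which yields the factor $(hn_2,t_{\mathrm{sf}})^{1/2}$.
\end{itemize}
Collecting the $t$-parts should give $t^{5/2+\ee}$, the $\ee$ coming from divisor bounds.

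\emph{The prime parts, $p_1\neq p_2$.} Modulo $p_1$ only the first factor survives, and the second factor contributes a unit. The resulting sum is a one-variable Kloosterman sum twisted by $e_{p_1}(xn_2)$ summed over $x$. Using $(hn_1,p_1)=1$ and Weil's bound, this gives $p_1^{3/2}$. If $p_1\mid n_2$, the $x$-sum forces an incompatible condition $\bar y\equiv 0$, so the term vanishes; this is the vanishing statement.

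\emph{The prime parts, $p_1=p_2=p$.} Both factors live modulo $p$ and the sum is a genuine correlation. When $n_2\not\equiv 0$, I would aim for cancellation in three of the four variables, giving $p^3$, with a loss of $(p,hn_2)^{1/2}$ in degenerate cases. When $n_2=0$, the orthogonality forces $y_1\equiv y_2$. The sum then collapses to $p\sum_b e_p((d_1-d_2)\bar b)$ times bounded factors, which gives $p^2(d_1-d_2,p)$. In this case the $t$-part is bounded with exponent $3$ rather than $5/2$.

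The main obstacle is the square-root cancellation for the correlation sums modulo primes $\ell\mid t_{\mathrm{sf}}$ and modulo $p$ when $p_1=p_2$. There one has to show that the relevant variety or sheaf is non-degenerate (Newton polygon conditions à la Adolphson--Sperber), outside exactly the divisibility conditions appearing in the gcd factors. I would first reduce by explicit evaluation, using Ramanujan sums and the change of variables $b\mapsto\bar b$, to a sum of products of two Kloosterman sums. Then I would invoke Deligne's bound for the correlation of Kloosterman sheaves, which is non-trivial unless the shifts coincide.
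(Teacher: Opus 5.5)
The paper does not prove this statement; it imports it from \cite[Lemma 8]{Blomer-Li-2025}, so there is no internal argument to compare against. Your overall architecture (open both sums, use orthogonality in $x$, factor by the Chinese remainder theorem, apply Deligne-type correlation bounds at primes $\ell\Vert t$) is sensible. The local computations for $p_1\ne p_2$ at $p_i\nmid t$, including the vanishing, check out, as do the $n_2=0$ evaluation at $p$ and the sheaf-correlation step at $\ell\Vert t$. Two steps, however, do not give the stated bounds as written.

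\emph{The squarefull part.} Take $\ell^a\Vert t_{\mathrm{sq}}$ with $a\ge2$ and, say, $\ell\nmid n_1p_1p_2$. The lemma permits a local factor $\ell^{5a/2}\cdot\ell^{a/2}=\ell^{3a}$. Summing trivially over $x\bmod\ell^a$ and applying Lemma \ref{lem:blomer-li-pointwise} to each factor (local size $\ell^{3a/2}$ each) gives $\ell^{a}\cdot\ell^{3a}=\ell^{4a}$. That overshoots the allowed bound by a full factor $t_{\mathrm{sq}}$; it does not cost ``exactly $t_{\mathrm{sq}}^{1/2}$''. The pointwise bound is attained only on a sparse set of $x$, so the saving must come from orthogonality in $x$. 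There are two ways to get it:
\begin{itemize}
\item apply Cauchy--Schwarz in $x$ together with the Parseval bound of Lemma \ref{lem:uniform-operator-bound} with $\gamma$ a point mass, which gives $\ell^{3a}$ on the nose; or
\item use Lemma \ref{lem:fourier-representation} and your $x$-orthogonality, then Weil's bound $|S(h,m;\ell^a)|\ll\ell^{a/2}$ for the two individual Kloosterman sums, and sum trivially over the one free $y$ (using Cauchy--Schwarz in $y$ if $\ell\mid h$).
\end{itemize}
This same input is what actually yields the exponent $3$ for the $t$-part when $n_2=0$, which you assert without a mechanism. It also gives the bound $p^3$ when $p_1=p_2=p\nmid t$ with no factor $(p,hn_2)^{1/2}$. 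The loss $(p,hn_2)^{1/2}$ you allow there would already contradict the stated bound.

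\emph{The case $p_i\mid t$.} The lemma has no hypothesis $(p_1p_2,t)=1$; the $p_1$ inside $(hn_2p_1,t_{\mathrm{sf}})$ is evidently there to accommodate $p_1=p_2\mid t_{\mathrm{sf}}$. Its bounds leave no room for a loss of $p_i^{1/2}$. Such a loss is harmless in the application, but it does not prove the lemma as stated.
\begin{itemize}
\item For $p_1=p_2$, the Parseval bound gives exactly the claimed power of $p$.
\item For $p_1\ne p_2$ with $p_1^2\mid t$, the Weil route above also matches.
\item The delicate case is $p_1\ne p_2$ with $p_1\Vert t$. After orthogonality the $p_1$-part is, up to unit rescalings, $p_1^2\sideset{}{^*}\sum_{y\bmod p_1}S(h,m_0+p_1m_1(y);p_1^2)\,\overline{S(h,d_2+y;p_1)}$, where $m_1(y)$ is a fixed multiple of $\bar y$. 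Weil gives only $p_1^{9/2}$ here (Parseval gives $p_1^{5}$), against the required $p_1^{4}$.
\end{itemize}
The missing $p_1^{1/2}$ comes from the stationary-phase evaluation of Kloosterman sums modulo $p_1^2$. As a function of $m_1$, $S(h,m_0+p_1m_1;p_1^2)$ is $p_1$ times a sum of two additive characters in $m_1$. This converts the $y$-sum into a two-variable complete sum modulo $p_1$, which has square-root cancellation.
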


\subsection{Matrix structure of the complete character sum}\label{matrix structure}

We now isolate the matrix structure of the complete character sum. Lemma \ref{lem:fourier-representation} gives an exact Fourier representation, while Lemma 2.10 identifies the associated finite operator and provides an operator norm estimate valid uniformly for all moduli. Fixing $p$ and $l$ at that stage is unnecessarily early. In the dominant major-arc branch, all three variables $p,l,\nu$ enter the character sum only through its row parameter. Section \ref{sec-S2-BL} shows that, after writing $k=l\nu$, the relevant pairs $(p,k)$ occupy distinct rows. This permits a direct application of Lemma \ref{lem:uniform-operator-bound} to the packed coefficient vector. The purpose of the present subsection is therefore to isolate the uniform operator bound that will ultimately be applied to the entire packed family before the auxiliary variables are separated.

We begin with a simple Fourier representation of the character sum introduced above, which will be useful in the subsequent analysis.

\begin{lemma}\label{lem:fourier-representation}
For $n_1\mid c$ and $q=c/n_1$, we have
\begin{align*}
S_{h,d,n_1,n_2}(c)=\sideset{}{^{*}}\sum_{x\bmod q}S(h,d+n_1x;c)e_q(n_2\bar x).
\end{align*}
\end{lemma}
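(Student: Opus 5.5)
We prove the identity by a finite Fourier expansion in the inner Kloosterman variable, followed by reassembling the resulting sums. Recall that
\begin{align*}
S_{h,d,n_1,n_2}(c)=\sideset{}{^{*}}\sum_{b\bmod c}e_c(hb+\bar bd)S(\bar b,n_2;q),
\qquad
S(\bar b,n_2;q)=\sideset{}{^{*}}\sum_{x\bmod q}e_q(\bar b x+n_2\bar x).
\end{align*}
Since $q\mid c$, the quantity $\bar b \bmod q$ is just the reduction of $\bar b\bmod c$. This makes $e_q(\bar b x)=e_c(n_1 x\bar b)$ well defined for $x$ modulo $q$: changing $x$ by $q$ changes $n_1x$ by $c$.

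First I open the Kloosterman sum $S(\bar b,n_2;q)$ and interchange it with the $b$-sum, both sums being finite. This gives
\begin{align*}
S_{h,d,n_1,n_2}(c)=\sideset{}{^{*}}\sum_{x\bmod q}e_q(n_2\bar x)\sideset{}{^{*}}\sum_{b\bmod c}e_c\big(hb+\bar b(d+n_1x)\big).
\end{align*}
The inner sum is exactly the Kloosterman sum $S(h,d+n_1x;c)$. It depends on $x$ only modulo $q$, because $n_1(x+q)=n_1x+c$, so the outer sum over $x\bmod q$ is well defined. This is precisely the claimed identity.

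The only point needing care is the compatibility of residues. I must check that writing $\bar b$ for the inverse modulo $c$ inside $e_q(\bar b x)$ agrees with the inverse modulo $q$ used in the definition of $S(\bar b,n_2;q)$. It does: if $b\bar b\equiv1\bmod c$, then also $b\bar b\equiv 1\bmod q$. I also need the well-definedness of $x\mapsto d+n_1x\bmod c$ on classes modulo $q$, which holds by the computation above. There is no genuine obstacle; the lemma is a direct rearrangement.
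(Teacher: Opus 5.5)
Your proposal is correct and follows essentially the same argument as the paper: open $S(\bar b,n_2;q)$, rewrite $e_q(\bar b x)=e_c(n_1\bar b x)$ using $c=n_1q$ and the compatibility of inverses modulo $c$ and $q$, interchange the finite sums, and recognize the inner $b$-sum as $S(h,d+n_1x;c)$. Your explicit check that this inner sum depends on $x$ only modulo $q$ is a small detail the paper leaves implicit.
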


\begin{proof}
Opening the inner Kloosterman sum in \eqref{eq:basic-character-sum}, we obtain
\begin{align*}
S(\bar b,n_2;q)=\sideset{}{^{*}}\sum_{x\bmod q}e_q(\bar bx+n_2\bar x).
\end{align*}
Since the inverse of $b$ modulo $c$ reduces to the inverse of $b$ modulo $q$, and $c=n_1q$, we have	$e_q(\bar bx)=e_c(n_1\bar bx)$. Interchanging the $b$- and $x$-sums gives
\begin{align*}
\sideset{}{^{*}}\sum_{x\bmod q}e_q(n_2\bar x)\sideset{}{^{*}}\sum_{b\bmod c}e_c\Big(hb+\bar b(d+n_1x)\Big),
\end{align*}
and the inner sum is $S(h,d+n_1x;c)$.
\end{proof}

The preceding representation suggests viewing the character sum as a finite matrix kernel. We therefore introduce the following normalization
\begin{align*}
K_{n_1,c}(d,n_2)=\frac{\sqrt n_1}{c}S_{h,d,n_1,n_2}(c).
\end{align*}
The resulting matrix satisfies a uniform $\ell^2$ operator bound.

\begin{lemma}\label{lem:uniform-operator-bound}
Let $T_{n_1,c}:\ell^2(\Z/q\Z)\to\ell^2(\Z/c\Z)$ be the matrix with kernel $K_{n_1,c}(d,n_2)$. Then, we have
\begin{align*}
\|T_{n_1,c}\|_{\op}\le\sqrt c.
\end{align*}
The bound is uniform in $h$ and requires no squarefreeness assumption. Equivalently, for arbitrary complex coefficients $\gamma_d$, we have
\begin{align*}
\sum_{n_2\bmod q}\Big|\sum_{d\bmod c}\gamma_d S_{h,d,n_1,n_2}(c)\Big|^2\le \frac{c^3}{n_1}\sum_{d\bmod c}|\gamma_d|^2.
\end{align*}
\end{lemma}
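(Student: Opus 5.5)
The plan is to prove the equivalent quadratic form inequality directly. Applying orthogonality twice, once in the $n_2$-variable and once in the shifted Kloosterman argument, together with Parseval in $d$, should give the constant $c^3/n_1$ exactly, with no arithmetic input beyond Lemma \ref{lem:fourier-representation}. Since the kernel is normalized by $\sqrt{n_1}/c$, the operator bound then follows from
\[
\frac{n_1}{c^2}\cdot\frac{c^3}{n_1}=c .
\]
This gives $\|T_{n_1,c}\|_{\op}\le\sqrt c$. (The displayed inequality bounds the norm of the transpose, which has the same operator norm.)

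First step. Insert Lemma \ref{lem:fourier-representation} and interchange the sums. Writing
\[
A(x)=\sum_{d\bmod c}\gamma_d\,S(h,d+n_1x;c),
\]
we get
\[
\sum_{d}\gamma_d S_{h,d,n_1,n_2}(c)=\sideset{}{^{*}}\sum_{x\bmod q}A(x)\,e_q(n_2\bar x).
\]
As $x$ runs over reduced classes mod $q$, the inverses $\bar x$ are distinct residues mod $q$. Orthogonality of additive characters in $n_2\bmod q$ therefore yields
\[
\sum_{n_2\bmod q}\Big|\sum_d\gamma_dS_{h,d,n_1,n_2}(c)\Big|^2=q\sideset{}{^{*}}\sum_{x\bmod q}|A(x)|^2 .
\]
Since $c=n_1q$, the map $x\mapsto n_1x$ from $\Z/q\Z$ to $\Z/c\Z$ is injective. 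Positivity then lets us enlarge the range, giving
\[
\sideset{}{^{*}}\sum_{x\bmod q}|A(x)|^2\le \sum_{y\bmod c}\Big|\sum_d\gamma_d S(h,d+y;c)\Big|^2 .
\]

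Second step. Open the Kloosterman sum as
\[
S(h,d+y;c)=\sideset{}{^{*}}\sum_{b\bmod c}e_c\big(hb+\bar b(d+y)\big).
\]
With $\hat\gamma(u)=\sum_d\gamma_d e_c(ud)$, the inner sum becomes
\[
\sideset{}{^{*}}\sum_b e_c(hb)\hat\gamma(\bar b)\,e_c(\bar b y).
\]
Again the $\bar b$ are distinct mod $c$, so orthogonality in $y\bmod c$ gives
\[
c\sideset{}{^{*}}\sum_b|\hat\gamma(\bar b)|^2\le c\sum_{u\bmod c}|\hat\gamma(u)|^2=c^2\sum_d|\gamma_d|^2 ,
\]
the last equality by Parseval. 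The factor $|e_c(hb)|=1$ disappears at this point, which is why the bound is uniform in $h$. Collecting the steps gives
\[
q\cdot c^2\sum_d|\gamma_d|^2=\frac{c^3}{n_1}\sum_d|\gamma_d|^2 .
\]

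The only delicate point is the injectivity and distinctness bookkeeping: that $x\mapsto\bar x$ and $x\mapsto n_1x$ do not collapse residues, and that $e_q(n_2\bar x)$ is well defined because $n_2$ is taken mod $q$. No Weil-type bound or squarefreeness of $c$ is needed, consistent with the claim that the estimate holds for all moduli.
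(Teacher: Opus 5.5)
Your proof is correct and gives exactly the paper's constant $qc^2=c^3/n_1$, but it is organized differently.

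The paper factors $T_{n_1,c}$ itself (up to the normalization $\sqrt{n_1}/c$) as $F_cJ_cM_hJ_cL_{q,c}F_qJ_qR_qF_q$ and multiplies operator norms:
$\sqrt c\,(\varphi(c)/\varphi(q))^{1/2}\sqrt q\,\sqrt q\le q\sqrt{n_1c}$.
There the passage from modulus $q$ to modulus $c$ goes through the lift $L_{q,c}\colon\ell^2(\Z/q\Z)\to\ell^2((\Z/c\Z)^\times)$, whose norm requires the fibre count $\varphi(c)/\varphi(q)\le n_1$.

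You instead bound the transpose directly:
orthogonality in $n_2$ gives the factor $q$;
the injection $x\mapsto n_1x$ of $\Z/q\Z$ into $\Z/c\Z$ costs nothing;
orthogonality in $y\bmod c$ followed by Parseval in $d$ gives $c^2$.
In effect, the paper's lift together with its second transform modulo $q$ (combined norm $\sqrt{n_1}\sqrt q$) is replaced by a single transform modulo $c$ (norm $\sqrt c=\sqrt{n_1q}$). No $\varphi$-ratio appears.

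What each route buys:
\begin{itemize}
\item Your argument is shorter and needs no operator bookkeeping. Since only $|e_c(hb)|=1$ enters your second step, it carries over with notational changes to a bounded multiplier $w_c(b)$ as in Lemma~\ref{lem:weighted-operator-bound}. The paper proves that lemma by a similar orthogonality argument, but with a fibre-counting Cauchy--Schwarz step in place of your injection.
\item The paper's factorization exhibits the kernel explicitly as a composition of Fourier transforms, inversions, a phase multiplication and a lift. This is the matrix structure the authors emphasize.
\end{itemize}
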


\begin{proof}
For $m\geq 1$, let $F_m$ denote the unnormalized Fourier transform on $\Z/m\Z$, defined by
\begin{align*}
(F_mf)(y)=\sum_{x\bmod m}f(x)e_m(xy).
\end{align*}
Thus $\|F_m\|_{2\to2}=\sqrt m$. Let $R_q$ be restriction from $\Z/q\Z$ to $(\Z/q\Z)^\times$, and let $J_q$ denote inversion on $(\Z/q\Z)^\times$, namely
\begin{align*}
	(J_qf)(x)=f(\bar x).
\end{align*}
We use the analogous notation $J_c$ on $(\Z/c\Z)^\times$. Let
\begin{align*}
L_{q,c}:\ell^2(\Z/q\Z)\longrightarrow\ell^2((\Z/c\Z)^\times)
\end{align*}
be defined by
\begin{align*}
(L_{q,c}f)(b)=f(b\bmod q),\quad (b,c)=1.
\end{align*}
Since the reduction map
\begin{align*}
(\Z/c\Z)^\times\longrightarrow(\Z/q\Z)^\times
\end{align*}
is surjective, with every fibre of cardinality $\varphi(c)/\varphi(q)$, we have
\begin{align*}
\|L_{q,c}f\|_2^2=\frac{\varphi(c)}{\varphi(q)}\sideset{}{^{*}}\sum_{x\bmod q}|f(x)|^2\leq\frac{\varphi(c)}{\varphi(q)}\|f\|_2^2.
\end{align*}
Consequently, we derive
\begin{align*}
\|L_{q,c}\|_{2\to2}=\Big(\frac{\varphi(c)}{\varphi(q)}\Big)^{1/2}\leq\sqrt{n_1},
\end{align*}
where
\begin{align*}
\frac{\varphi(c)}{\varphi(q)}=\frac{\varphi(n_1q)}{\varphi(q)}=n_1\prod_{\substack{p\mid n_1\\ p\nmid q}}\Big(1-\frac1p\Big)\leq n_1.
\end{align*}
Whenever $F_m$ is applied to a function defined on $(\Z/m\Z)^\times$, we extend that function by zero to $\Z/m\Z$.

Let $M_h$ denote multiplication on $(\Z/c\Z)^\times$ by the phase $e_c(hb)$, so that
\begin{align*}
(M_hf)(b)=e_c(hb)f(b).
\end{align*}
The operators $R_q$ have norm at most $1$, while $J_q$, $J_c$ and $M_h$ are isometries. We now verify the operator factorization. For $f\in\ell^2(\Z/q\Z)$, successive application of the above operators gives
\begin{align*}
\big(F_cJ_cM_hJ_cL_{q,c}F_qJ_qR_qF_qf\big)(d)&=\sideset{}{^{*}}\sum_{b\bmod c}e_c(db+h\bar b)\sideset{}{^{*}}\sum_{x\bmod q}e_q(bx)\sum_{n_2\bmod q}f(n_2)e_q(n_2\bar x)\\
&=\sum_{n_2\bmod q}f(n_2)\sideset{}{^{*}}\sum_{x\bmod q}e_q(n_2\bar x)
\sideset{}{^{*}}\sum_{b\bmod c}e_c\big((d+n_1x)b+h\bar b\big).
\end{align*}
By the symmetry of Kloosterman sums, we have
\begin{align*}
S(u,v;c)=S(v,u;c),
\end{align*}
the innermost sum is $S(h,d+n_1x;c)$. Hence, we obtain
\begin{align*}
\big(F_cJ_cM_hJ_cL_{q,c}F_qJ_qR_qF_qf\big)(d)=\sum_{n_2\bmod q}S_{h,d,n_1,n_2}(c)f(n_2).
\end{align*}
Thus the matrix with kernel $S_{h,d,n_1,n_2}(c)$ is represented by
\begin{align*}
F_cJ_cM_hJ_cL_{q,c}F_qJ_qR_qF_q.
\end{align*}
It follows that its operator norm is bounded by
\begin{align*}
\sqrt c\,\Big(\frac{\varphi(c)}{\varphi(q)}\Big)^{1/2}\sqrt q\,\sqrt q\leq q\sqrt{n_1c}.
\end{align*}
Recalling that
\begin{align*}
K_{n_1,c}(d,n_2)=\frac{\sqrt n_1}{c}S_{h,d,n_1,n_2}(c),
\end{align*}
we therefore obtain
\begin{align*}
\|T_{n_1,c}\|_{\op}\leq\frac{\sqrt n_1}{c}\,q\sqrt{n_1c}=\frac{n_1q}{\sqrt c}=\sqrt c.
\end{align*}

Finally, for arbitrary complex coefficients $\gamma_d$, the adjoint operator satisfies
\begin{align*}
T_{n_1,c}^{*}(\overline{\gamma})(n_2)=\sum_{d\bmod c}\overline{K_{n_1,c}(d,n_2)}\,\overline{\gamma_d}=\overline{\sum_{d\bmod c}\gamma_dK_{n_1,c}(d,n_2)}.
\end{align*}
Hence, we obtain
\begin{align*}
\sum_{n_2\bmod q}\Big|\sum_{d\bmod c}\gamma_dK_{n_1,c}(d,n_2)\Big|^2&=\|T_{n_1,c}^{*}(\overline{\gamma})\|_2^2\leq c\sum_{d\bmod c}|\gamma_d|^2.
\end{align*}
Undoing the normalization in $K_{n_1,c}(d,n_2)$ gives
\begin{align*}
\sum_{n_2\bmod q}\Big|\sum_{d\bmod c}\gamma_dS_{h,d,n_1,n_2}(c)\Big|^2\leq\frac{c^3}{n_1}\sum_{d\bmod c}|\gamma_d|^2.
\end{align*}
The estimate is uniform in $h$ and requires no squarefreeness assumption on $c$.
\end{proof}

\section{Proof of Theorem \ref{B-L}}

In this section, we present the proof of Theorem \ref{B-L}. Motivated by  Blomer and Li \cite{Blomer-Li-2025}, we decompose the shifted convolution sum by means of Jutila's circle method and then treat the two resulting contributions separately. The treatment of $S_1$ follows their argument, whereas the major-arc analysis of $S_2$ combines the nonzero-frequency matrix estimate with cancellation of the zero-frequency main term.

We recall that the main object of interest is
\begin{align*}
\sum_{ m- n=h} \lambda_\pi(1,n)W({n})d(m)W_0({m}),
\end{align*}
where $W$ and $W_0$ are smooth functions compactly supported on $[X,2X]$.
Insert the identity $1=\chi(\alpha)+(1-\chi(\alpha))$ into the additive orthogonality representation of $m-n=h$.
By  Lemma \ref{Jutila circle method}, we decompose this quantity into two parts, denoted by
\begin{align*}
S_1=\sum_{n,m} \lambda_\pi(1,n)W({n})d(m)W_0({m})
\int_{0}^{1}\chi(\alpha)e\big(( m- n-h)\alpha\big)d\alpha,
\end{align*}
and
\begin{align*}
S_2=\sum_{n,m} \lambda_\pi(1,n)W({n})d(m)W_0({m})
\int_{0}^{1}\big(1-\chi(\alpha)\big)e\big(( m- n-h)\alpha\big)d\alpha.
\end{align*}
Applying Lemma \ref{Kloosterman circle method} along with \eqref{eq:fourier-chi}, we may rewrite $S_2$ as
\begin{align*}
S_2=&\sum_{n,m} \lambda_\pi(1,n)W({n})d(m)W_0({m})\\
&\times\sum_{c\le C}\;\sideset{}{^*}\sum_{\substack{b\bmod{c}}}
\int_{-\frac{1}{c(c+c')}}^{\frac{1}{c(c+c'')}}\Big(1-\chi\Big(\frac{b}{c}+z\Big)\Big)
e\Big(( m- n-h)\Big(\frac{b}{c}+z\Big)\Big)dz.
\end{align*}

We recall that $\chi$ depends on the quantities $Q, \delta, w$ (which in turn determine $L$), together with the smooth cutoff function $\psi$. The function $\psi$ remains fixed once and for all for the remainder of our argument. We choose a factorable Jutila weight
\begin{equation*}
	w(q)=\sum_{\substack{Q_1/2\le p\le Q_1\\p\ \mathrm{prime},\ p\nmid h}}
	\sum_{pt=q}\rho(t/Q_2),
\end{equation*}
where $Q=Q_1Q_2$, and $\rho$ is a fixed smooth nonnegative, nonzero function supported in $[1/2,1]$. In the parameter range used in the endgame, $Q_1$ is a fixed positive power of $X$. The exclusion $p\mid h$ removes only $O(1)$ primes from $[Q_1/2,Q_1]$, whereas the prime number theorem supplies $\gg Q_1/\log Q_1$ primes there. Since $pt\asymp Q$ on the support of $\rho$ and $\varphi(pt)\gg pt/\log\log(3pt)$, we have
\begin{equation*}
Q^2X^{-o(1)}\ll L=\sum_q\varphi(q)w(q)\ll Q^2X^{o(1)},\quad \|w\|_\infty\ll X^\ee.
\end{equation*}
Thus throughout the proof we may write $L=Q^{2+o(1)}$; in particular $(\delta L)^{-1}=X^\ee(\delta Q^2)^{-1}$. In order to simplify our notation throughout the subsequent analysis, we impose the following working hypotheses
$\log Q\asymp|\log\delta| \asymp \log X.$

We next treat the contributions arising from $S_2$ and $S_1$ separately.

\subsection{Estimation of  $S_2$}  \label{sec-S2-BL}

We choose parameters $C$ and $C_0$ satisfying $C_0<C$ and $\log C_0\asymp\log C\asymp \log X$.
The decomposition $c\le C_0$ and $C_0< c\le C$ separates the contribution of $S_2$ into the major-arc part $S_{2,0}$ and the minor-arc part $S_{2,1}$, respectively.

The analysis of the minor-arc part $S_{2,1}$ proceeds as in Blomer and Li \cite[Section 3.1]{Blomer-Li-2025}. By Cauchy–Schwarz over the disjoint Farey arcs and Lemma \ref{Jutila circle method},  we get
\begin{align*}
|S_{2,1}|\ll& \Big(\int_0^1 |1-\chi(z)|^2dz\Big)^{1/2}\Big(\sum_n|\lambda_\pi(1,n)W(n)|^2\Big)^{1/2}\\
\times&\max_{C_0<c\le C}\max_{\substack{b\bmod c\\(b,c)=1}}\max_{|z|\le(cC)^{-1}}
\Big|\sum_m d(m)W_0(m)e\Big(\Big(\f{b}{c}+z\Big)m\Big)\Big|.
\end{align*}
For the first integral, we invoke Lemma \ref{Jutila circle method} to derive the upper bound $X^\ee{Q^2}/{L^2\delta}$.  By Lemma \ref{Voronoi-d(n)}, we derive
\begin{align*}
\sum_m d(m)W_0(m)e\Big(\Big(\f{b}{c}+z\Big)m\Big)\ll X^\ee\Big(\frac Xc+c(1+X|z|)\Big)
\ll X^\ee\Big(\frac X{C_0}+C\Big).
\end{align*}
By  Rankin--Selberg theory, the sum over $n$ is bounded by $X$.
Since $\delta=X^{-1+\ee_0}$, we get
\begin{align}\label{S_21-bound-BL}
S_{2,1}\ll X^\ee\frac{X^{1/2}}{Q\delta^{1/2}}\Big(\frac X{C_0}+C\Big)\ll X^\ee\Big(\frac{X^2}{QC_0}+\frac{XC}{Q}\Big).
\end{align}

We now turn to bound the major arc contribution $S_{2,0}$ with $c\le C_0$. The main saving arises from the treatment of $S_{2,0}$. For $c\le C_0$ use the refined form \eqref{eq:refined-farey}. Apply the divisor Voronoi formula to the $m$-sum. Assume
\begin{align}\label{eq:major-arc-range}
C_0\le X^{1/2-\eta},\qquad C\ge X^{1/2+\eta}
\end{align}
for some fixed $\eta>0$. Then the dual divisor sum is negligible by \eqref{eq:divisor-minus-transform}. Integration by parts in the main term also restricts $|z|\ll X^{-1+\ee_1}$. By \eqref{eq:farey-t-sum}, the $t$-sum in the refined Farey identity is bounded by $c/(1+|u|)$ for $|u|\le c/2$. Applying $\mathrm{GL}(3)$ Voronoi and Remark \ref{rem-Gz}, we obtain (see also the expression after (3.9) in \cite{Blomer-Li-2025}.)
\begin{align}	\label{eq:s2-major-arc-master}
\begin{aligned}
S_{2,0}\ll& X^{3/2+\ee}\sum_{c\le C_0}c^{-3/2}\int_{|z|\ll X^{-1+\ee_1}}\sum_\pm\sum_{n_2\ge1}\frac1{\sqrt {n_2}}\sum_{n_1\mid c}|\lambda_\pi(n_2,n_1)|\Big(1+\frac{n_1^2n_2X}{c^3}\Big)^{-A}\\
&\times\sum_{|u|\le c/2}\frac{\mathcal C(c,n_1,n_2,u;z)}{1+|u|}\,dz,
\end{aligned}
\end{align}
where
\begin{align}	\label{eq:major-arc-character-sum}
\mathcal C(c,n_1,n_2,u;z)=\Big|\;\sideset{}{^*}\sum_{b\bmod c}e_c(u\bar b-hb)S(-\bar b,\pm n_2;c/n_1)\Big(1-\chi\Big(\frac bc+z\Big)\Big)\Big|.
\end{align}

For the $\chi$-part, we insert \eqref{chi-repr}. Thus we rewrite the $\chi$-part as
\begin{align*}
\begin{aligned}
\frac{1}{\delta L}\;\sideset{}{^*}\sum_{b\bmod c}e_c(u\bar b-hb)S(-\bar b,\pm n_2;c/n_1)\sum_{q} w(q)\sum_{d\mid q}\mu\Big(\frac qd\Big)
\sum_{l\equiv bd\bmod c} \psi\Big(\frac1\delta\Big(\frac{l}{cd}+z\Big)\Big).
\end{aligned}
\end{align*}
When $l=0$, the congruence $l\equiv bd\bmod c$ forces $c\mid d$ and hence $c\mid q$. In this case the smooth factor is $\psi(z/\delta)$, independent of $d$, and M\"obius inversion gives the exact identity
\begin{align*}
\sum_{\substack{d\mid q\\c\mid d}}\mu(q/d)=
\begin{cases}
1,&q=c,\\
0,&q\ne c.
\end{cases}
\end{align*}
Assuming $4C_0<Q$, since $w$ is supported on $[Q/4,Q]$, the multiplier associated to the circle frequency $l=0$ is
\begin{align*}	
\frac{\psi(z/\delta)}{\delta L}\sum_q w(q)\sum_{\substack{d\mid q\\c\mid d}}\mu(q/d)=\frac{\psi(z/\delta)w(c)}{\delta L}=0.
\end{align*}

We therefore consider $l\ne0$, retaining the original signed sums until the Poisson frequencies have been separated. The contribution of $1$ inside the complete sum will be kept together with the zero Poisson frequency of $-\chi$. 

Let $g_1=(l,c)=(d,c)$, $c=g_1g_2$, and divide $l$ and $d$ by $g_1$. Then $(l,g_2)=(d,g_2)=1$ and the relevant part is
\begin{align*}
\begin{aligned}	
&\frac1{\delta L}\;\sideset{}{^{*}}\sum_{b\bmod c}e_c(u\bar b-hb)S(-\bar b,\pm n_2;c/n_1)\\
&\times	\sum_{g_1g_2=c}\sum_{\substack{l\ne0\\(l,g_2)=1}}\sum_r\mu(r)
\sum_{\substack{(d,g_2)=1\\d\equiv\bar bl\, \bmod\,g_2}}
w(dg_1r)\psi\Big(\delta^{-1}\Big(\frac l{cd}+z\Big)\Big).
\end{aligned}
\end{align*}
From the definition of $w$, it follows that the equation $pt=dg_1r$ is decomposed exactly as in \cite[(3.13)--(3.15)]{Blomer-Li-2025}. After the same changes of variables, we use the following three branches:
\begin{align*}
B_1:&\quad g_1g_2=c,\quad (p,r)=1,\quad d\equiv\bar bl\bmod{g_2},\\
B_2:&\quad pg_1g_2=c,\quad (p,r)=1,\quad d\equiv\bar bl\bmod{g_2},\\
B_3:&\quad g_1g_2=c,\quad (p,r)=1,\quad (p,c)=1,\quad d\equiv\bar p\bar bl\bmod{g_2}.
\end{align*}
In all three branches, the smooth factor is $\rho(dg_1r/Q_2)$. The third branch  has $	\psi(\delta^{-1}( l/{(cdp)}+z))$, while the first two have $\psi(\delta^{-1}(l/(cd)+z))$. After a dyadic partition $d\asymp Y$, one  has $1\le Y\ll{Q_2}/{(g_1r)}.$
The support of $\psi$ gives
\begin{align*}	
|l|\ll cY\varpi\delta,\quad\varpi=1\ \text{in }B_1,B_2,\qquad \varpi=p\ \text{in }B_3.
\end{align*}
Indeed $|z|/\delta\ll X^{\ee_1-\ee_0}=o(1)$, and if a nonzero integer $l$ lies in the support then	${|l|}/{(cY\varpi)}\ll\delta+|z|\ll\delta$, 
which itself implies $cY\varpi\delta\gg1$.

The first two branches will disappear in the final parameter range. More precisely, we impose
\begin{align}\label{eq:eliminate-first-two-branches}
C_0Q_2\delta=o(1).
\end{align}
Since $cY\delta\ll C_0Q_2\delta$ in $B_1$ and $B_2$, no nonzero integer $l$ can occur there for sufficiently large $X$. Thus the entire nonzero-$l$ analysis reduces to $B_3$.

We only need to consider Poisson summation in the third branch
\begin{align*}
\begin{aligned}	
&\frac1{\delta L}\;\sideset{}{^{*}}\sum_{b\bmod c}e_c(u\bar b-hb)S(-\bar b,\pm n_2;c/n_1)\\
&\times \sum_{\substack{Q_1/2\leq p\leq Q_1\\p {\text{ prime}}\\ p\nmid ch}}	\sum_{g_1g_2=c}\sum_{\substack{l\ne0\\(l,g_2)=1}}\sum_{(p,r)=1}\mu(r)\sum_{\substack{(d,g_2)=1\\d\equiv\overline{pb}l\, \bmod\,g_2}}\rho\Big(\f{dg_1r}{Q_2}\Big)\psi\Big(\delta^{-1}\Big(\frac l{cdp}+z\Big)\Big).
\end{aligned}
\end{align*}
Poisson summation modulo $g_2$ gives the weight
\begin{align}	\label{eq:poisson-weight}
\alpha_{p,l,\nu}=\frac1{g_2}\int_{\R}v(\xi/Y)\rho(\xi g_1r/Q_2)\psi\Big(\delta^{-1}\Big(\frac l{c\xi p}+z\Big)\Big)e\Big(-\frac{\nu\xi}{g_2}\Big)\,d\xi.
\end{align}
To avoid confusing the original $d$-variable with its Poisson dual, the latter is denoted by $\nu$. 
Repeated integration by parts gives, for every $A>0$,
\begin{align*}
	|\alpha_{p,l,\nu}|\ll_A\frac Y{g_2}\Big(1+\frac{|\nu|Y}{g_2}\Big)^{-A}.
\end{align*}
Put $H={g_2}/{Y}.$  For a fixed sufficiently small $\eta_0>0$, the contribution of $|\nu|>HX^{\eta_0}$ is $O(X^{-A})$. If $HX^{\eta_0}<1$, every nonzero frequency is already negligible. Otherwise the relevant nonzero frequencies lie in two intervals of  $O(HX^{\eta_0})$. Moreover, we get
\begin{align}	\label{eq:poisson-weight-energy}
\sum_{\nu\ne0}|\alpha_{p,l,\nu}|^2\ll_A H^{-1}.
\end{align}

We next track the $b$-phase exactly through the same changes of variables used in the branch decomposition. It is convenient to write $\varpi=1$ in $B_1$ and $\varpi=p$ in $B_3$. The Poisson calculation following the transformed congruence gives
\begin{align*}
\sideset{}{^{*}}\sum_{b\bmod c}e_c\bigl(-hb+\bar b\bar\varpi(l\nu g_1+\varpi u)\bigr)S(-\bar b,\pm n_2;c/n_1).
\end{align*}
Thus in the third branch, where $\varpi=p$ and $p\nmid c$, the $b$-sum is exactly
\begin{align}\label{eq:exact-poisson-phase}
\sideset{}{^{*}}\sum_{b\bmod c}e_c\bigl(-hb+\bar b(u+g_1 l\nu\bar p)\bigr)S(-\bar b,\pm n_2;c/n_1).
\end{align}
The inverse $\bar p$ is therefore part of the exact Poisson phase, which is not introduced by an estimate. This point is important for the row map below. After $b\mapsto-b$ and the corresponding sign changes in the Kloosterman sum, \eqref{eq:exact-poisson-phase} is
$S_{h,-u-g_1 l\nu\bar p,n_1,\pm n_2}(c).$
What matters here is that $p,l,\nu$ occur only in the row variable $d$; the $\mathrm{GL}(3)$ dual variable $n_2$, the divisor $n_1\mid c$, and the normalized Voronoi weight are common to the whole family.

We retain these variables through the quadratic step rather than fixing $p$ and $l$ first. Set
\begin{align}\label{eq:packed-coefficients}
\beta_{p,k}=\sum_{\substack{l\nu=k\\ l\ne0,\ \nu\ne0}}\alpha_{p,l,\nu},
\end{align}
where $l$ and $\nu$ are restricted to the actual supports in $B_3$ and $0<|\nu|\ll HX^{\eta_0}$. Then the $n_2$-dependent expression in $B_3$ becomes
\begin{align*}
T(n_2)=\sum_{p\asymp Q_1}\sum_{k\ne0}\beta_{p,k}S_{h,d_0-g_1k\bar p,n_1,\pm n_2}(c),
\end{align*}
where $d_0=-u$. In particular, we have $|k|=|l\nu|\ll cg_2Q_1\delta X^{\eta_0}.$

\begin{lemma}\label{lem:packed-row-injectivity}
Assume  $C_0^2\delta X^{\eta_0}=o(1),  C_0Q_1^2\delta X^{\eta_0}=o(1).$ Then, for $c\le C_0$ and $X$ sufficiently large, the map	$(p,k)\longmapsto d_0-g_1k\bar p\bmod c$ is injective on the support of $T(n_2)$.
\end{lemma}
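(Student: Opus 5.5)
Suppose two pairs $(p,k)$ and $(p',k')$ in the support of $T(n_2)$ give the same row, so that
\begin{align*}
g_1k\bar p\equiv g_1k'\bar p'\pmod c .
\end{align*}
The plan is to clear the inverses, pass to an exact integer identity, and then use primality of $p,p'$ together with the size constraints to force $(p,k)=(p',k')$.

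Since $p,p'\nmid c$ in $B_3$, multiplying by $pp'$ turns the congruence into $g_1(kp'-k'p)\equiv 0 \pmod{c}$. Writing $c=g_1g_2$, this is equivalent to
\begin{align*}
kp'-k'p\equiv 0\pmod{g_2}.
\end{align*}
Next we bound the integer $kp'-k'p$. We have $p,p'\asymp Q_1$ and $|k|,|k'|\ll cg_2Q_1\delta X^{\eta_0}$, so
\begin{align*}
|kp'-k'p|\ll cg_2Q_1^2\delta X^{\eta_0}.
\end{align*}
Because $g_2\mid c$ and $c\le C_0$, this gives $|kp'-k'p|/g_2\ll cQ_1^2\delta X^{\eta_0}\le C_0Q_1^2\delta X^{\eta_0}=o(1)$ by the second hypothesis. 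An integer divisible by $g_2$ whose absolute value is $o(g_2)$ must vanish, so $kp'=k'p$ exactly in $\mathbb Z$.

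Now suppose $p\neq p'$. Since both are primes, $p\mid kp'$ forces $p\mid k$, and $k\ne0$ on the support, so $|k|\ge p\gg Q_1$. This is where I expect the first hypothesis to enter. The bound on $k$ comes from $|l|\ll cYp\delta$ and $|\nu|\ll HX^{\eta_0}=g_2X^{\eta_0}/Y$, and it should be used without replacing $p$ by $Q_1$ too early. That bound gives $|k|/p\ll cg_2\delta X^{\eta_0}\le C_0^2\delta X^{\eta_0}=o(1)$. Hence $|k|<p$ for large $X$, which contradicts $p\mid k$ with $k\neq0$. Therefore $p=p'$, and then $kp'=k'p$ gives $k=k'$.

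The main obstacle is bookkeeping rather than depth. One must keep the bound $|k|\ll cg_2p\,\delta X^{\eta_0}$ with the actual $p$ (equivalently $\asymp Q_1$) and use $g_2\le c\le C_0$ in both estimates. One must also confirm that the support restriction $0<|\nu|\ll HX^{\eta_0}$ and $l\ne 0$, together with $p\nmid ch$ in $B_3$, are exactly what make the inverse $\bar p$ legitimate and exclude $k=0$. If the stated $k$-bound absorbs $p$ as $Q_1$, the first hypothesis still suffices: $|k|\ll cg_2Q_1\delta X^{\eta_0}$ and $p\ge Q_1/2$ give $|k|/p\ll C_0^2\delta X^{\eta_0}=o(1)$.
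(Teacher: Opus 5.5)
Your proof is correct and follows the paper's argument essentially step for step. You clear the inverses to get $kp'-k'p\equiv 0\pmod{g_2}$, use $C_0Q_1^2\delta X^{\eta_0}=o(1)$ to force $kp'=k'p$ in $\mathbb{Z}$, and use $C_0^2\delta X^{\eta_0}=o(1)$ to get $0<|k|<Q_1/2\le p$, which rules out $p\neq p'$.
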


\begin{proof}
Let	$K^*=cg_2Q_1\delta X^{\eta_0}.$ 
Since $g_2\le c\le C_0$, the first condition $C_0^2\delta X^{\eta_0}=o(1)$ 
gives $K^*<Q_1/2$, while the second gives $2Q_1K^*<g_2$, once $X$ is sufficiently large. Suppose two pairs $(p_1,k_1)$ and $(p_2,k_2)$ have the same image. Because $p_i\nmid c$ in $B_3$,
\begin{align*}
g_1k_1\bar p_1\equiv g_1k_2\bar p_2\pmod{g_1g_2}
\end{align*}
implies
\begin{align*}
k_1p_2-k_2p_1\equiv0\pmod{g_2}.
\end{align*}
On the other hand, we have
\begin{align*}
|k_1p_2-k_2p_1|\le2K^*Q_1<g_2.
\end{align*}
Thus, we derive  $k_1p_2=k_2p_1.$ If $p_1\ne p_2$, primality implies $p_1\mid k_1$. But $0<|k_1|<Q_1/2\le p_1$, this is a contradiction. Therefore $p_1=p_2$, and then $k_1=k_2$.
\end{proof}

The next estimate is the point at which the counting of $p$ and $l$ changes character. If these variables were summed outside the quadratic estimate, their cardinalities would enter linearly. After packing, they contribute to the energy of a single coefficient vector.

\begin{lemma}\label{lem:packed-coefficient-energy}
For fixed $c,g_1,g_2,r,Y,n_1,u,z$ in $B_3$, we have
\begin{align*}
\sum_{p,k}|\beta_{p,k}|^2\ll X^\ee Q_1^2g_1Y^2\delta,\quad\|\beta\|_2\ll X^\ee Q_1Y\sqrt{g_1\delta}.
\end{align*}
\end{lemma}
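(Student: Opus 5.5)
We bound the packed energy directly from the definition \eqref{eq:packed-coefficients} by Cauchy--Schwarz over the factorizations $k=l\nu$. For fixed $p$ and $k\ne0$, the number of pairs $(l,\nu)$ with $l\nu=k$ is at most $2d(|k|)\ll X^{\ee}$, because $|k|\ll cg_2Q_1\delta X^{\eta_0}$ is polynomially bounded in $X$. Hence
\begin{align*}
|\beta_{p,k}|^2\ll X^\ee\sum_{\substack{l\nu=k\\ l\ne0,\ \nu\ne0}}|\alpha_{p,l,\nu}|^2 .
\end{align*}
Summing over $k$ then gives
\begin{align*}
\sum_{p,k}|\beta_{p,k}|^2\ll X^\ee\sum_{p\asymp Q_1}\ \sum_{l\ne0}\ \sum_{\nu\ne0}|\alpha_{p,l,\nu}|^2 ,
\end{align*}
where the $l$-sum runs over the actual support in $B_3$. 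This is the only step where the packing is used: after this the pairs are simply counted.

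For each fixed $(p,l)$ we apply the energy bound \eqref{eq:poisson-weight-energy}, which gives $\sum_{\nu\ne0}|\alpha_{p,l,\nu}|^2\ll H^{-1}=Y/g_2$. It remains to count $(p,l)$. There are $O(Q_1)$ primes $p\asymp Q_1$. By the support of $\psi$ in $B_3$ (with $\varpi=p$), every nonzero $l$ satisfies $|l|\ll cYp\delta\ll cYQ_1\delta$. This count is nonempty only when $cYQ_1\delta\gg1$, so the number of $l$ is $\ll cYQ_1\delta$ with no additional $+1$ term. Multiplying these factors,
\begin{align*}
\sum_{p,k}|\beta_{p,k}|^2\ll X^\ee\, Q_1\cdot cYQ_1\delta\cdot\frac{Y}{g_2}=X^\ee Q_1^2 g_1 Y^2\delta ,
\end{align*}
since $c/g_2=g_1$. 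Taking square roots yields $\|\beta\|_2\ll X^\ee Q_1Y\sqrt{g_1\delta}$.

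The main point needing care is the count of $l$. If one only knew $|l|\ll 1+cYQ_1\delta$, an extra term would appear. That term is excluded because a nonzero $l$ in the support forces $cYp\delta\gg1$, as observed before \eqref{eq:eliminate-first-two-branches}, so the length of the $l$-range dominates the count. A second check is that \eqref{eq:poisson-weight-energy} holds uniformly in $p$ and $l$. This follows because the integration-by-parts bound for $\alpha_{p,l,\nu}$ depends only on $Y$ and $g_2$: the $\xi$-derivatives of $\psi(\delta^{-1}(l/(c\xi p)+z))$ are $O(Y^{-j})$ because $|l|/(cYp)\ll\delta$. The truncation $0<|\nu|\ll HX^{\eta_0}$ only shrinks the sums and introduces no loss.
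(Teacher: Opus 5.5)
Your proposal is correct and follows the paper's proof essentially step for step. Both use Cauchy--Schwarz over the factorizations $k=l\nu$ with the divisor bound, then count $O(Q_1)$ primes and $O(cYQ_1\delta)$ nonzero values of $l$, and apply \eqref{eq:poisson-weight-energy} to the $\nu$-sum to reach $X^\ee Q_1^2g_1Y^2\delta$. Your extra checks are sound and only make explicit what the paper leaves implicit, namely that the integration-by-parts bound is uniform in $p$ and $l$, and that no $+1$ term enters the count of $l$.
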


\begin{proof}
For a fixed nonzero integer $k$, the number of factorizations $k=l\nu$ is $d(|k|)\ll X^\ee$ throughout the polynomial support under consideration. Cauchy--Schwarz in the divisor relation therefore gives
\begin{align*}
|\beta_{p,k}|^2\ll X^\ee\sum_{\substack{l\nu=k\\l,\nu\ne0}}|\alpha_{p,l,\nu}|^2.
\end{align*}
Summing over $p$ and $k$, we obtain
\begin{align*}
\sum_{p,k}|\beta_{p,k}|^2\ll X^\ee\sum_{p,l}\sum_{\nu\ne0}|\alpha_{p,l,\nu}|^2.
\end{align*}
There are $O(Q_1)$ admissible primes and $O(cYQ_1\delta)$ admissible nonzero $l$. By \eqref{eq:poisson-weight-energy}, the $\nu$-sum contributes $H^{-1}=Y/g_2$. Hence, for $c=g_1g_2$, we have
\begin{align*}
\sum_{p,k}|\beta_{p,k}|^2\ll X^\ee Q_1(cYQ_1\delta)\frac Y{g_2}=X^\ee Q_1^2g_1Y^2\delta.
\end{align*}
This completes the proof of Lemma \ref{lem:packed-coefficient-energy}.
\end{proof}

The $\mathrm{GL}(3)$ Voronoi weight restricts a dyadic block $n_2\asymp N$ to $N\ll X^\ee{c^3}{(n_1^2X)^{-1}}.$ Since $q=c/n_1$, the major-arc condition $c\le C_0\le X^{1/2-\eta}$ gives $N/q\ll X^{-2\eta+\ee}<1$. 
Thus the interval $n_2\asymp N$ contains at most one representative of every residue class $\bmod q$. By Lemma \ref{lem:packed-row-injectivity}, the coefficients $\beta_{p,k}$ occupy distinct rows modulo $c$. Therefore the dual form of Lemma \ref{lem:uniform-operator-bound} gives
\begin{align*}
\sum_{n_2\bmod q}|T(n_2)|^2\le\frac{c^3}{n_1}\sum_{p,k}|\beta_{p,k}|^2.
\end{align*}
Let $U(n_2)$ denote the remaining normalized $\mathrm{GL}(3)$ Voronoi cutoff, so $|U(n_2)|\ll1$. By Cauchy--Schwarz and \eqref{lem:hecke-moments}, we derive
\begin{align*}
\begin{aligned}
\sum_{n_2\asymp N}\frac{|\lambda_\pi(n_2,n_1)|}{\sqrt n_2}|U(n_2)||T(n_2)|&\le\Big(\sum_{n_2\asymp N}\frac{|\lambda_\pi(n_2,n_1)|^2}{n_2}|U(n_2)|^2\Big)^{1/2}\Big(\sum_{n_2\asymp N}|T(n_2)|^2\Big)^{1/2}\\
&\ll X^\ee n_1^{1/2}\Big(\frac{c^3}{n_1}\Big)^{1/2}\|\beta\|_2=X^\ee c^{3/2}\|\beta\|_2.
\end{aligned}
\end{align*}
Combining  this and Lemma \ref{lem:packed-coefficient-energy},  the total nonzero Poisson-frequency contribution of the third branch is
\begin{align}	\label{eq:nonzero-poisson-total}
\begin{aligned}	
S_{2,0}^{B_3;\,l\ne0,\nu\ne0}&\ll\f{X^{1/2+\ee}}{\delta L}\sum_{c\leq C_0}c^{-\f{3}{2}}\sum_{g_1g_2=c}\sum_{r}\sum_{n_2\asymp N}\frac{|\lambda_\pi(n_2,n_1)|}{\sqrt n_2}|U(n_2)||T(n_2)|\\
&\ll \frac{X^{1/2+\ee}}{\delta Q^2}\sum_{c\le C_0}\sum_{g_1g_2=c}\sum_{r\le Q_2}Q_1Y\sqrt{g_1\delta}\\
&\ll \frac{X^{1+\ee}}{Q}\sum_{c\leq C_0}\sum_{g_1|c}\sum_{r\leq Q_2}\f{1}{\sqrt{g_1}r}\ll\frac{X^{1+\ee}C_0}{Q}.
\end{aligned}
\end{align}

Next, we combine the zero Poisson frequency ($\nu=0$) with the contribution of $1$ in \eqref{eq:major-arc-character-sum} before estimating the resulting character sum. In addition to \eqref{eq:major-arc-range}, \eqref{eq:eliminate-first-two-branches} and Lemma \ref{lem:packed-row-injectivity}, assume that $4C_0<Q, C_0Q_1\delta=o(1).$ We write
\begin{align*}
\mathcal P&=\{p:Q_1/2\le p\le Q_1,\ p\ \mathrm{prime},\ p\nmid h\},
\end{align*}
and
\begin{align*}
I_\rho&=\int_0^\infty t\rho(t/Q_2)\,dt\asymp Q_2^2.
\end{align*}

Let $\chi_0(c,z)$ denote its zero Poisson frequency ($\nu=0$) in \eqref{eq:poisson-weight}, summed over all the dyadic localizations in $d$. This quantity is independent of $b$.
Using \eqref{eq:poisson-weight} with $\nu=0$ and summing over the dyadic partition. Before reindexing, the summands are
\begin{align*}	
\chi_0(c,z)=\frac1{\delta L}\sum_{\substack{p\in\mathcal P\\p\nmid c}}\sum_{g_1g_2=c}\sum_{\substack{r\ge1\\(r,p)=1}}\mu(r)\sum_{\substack{l\ne0\\(l,g_2)=1}}\frac1{g_2}\int_0^\infty \rho(\xi g_1r/Q_2)\psi\Big(\frac l{cp\xi\delta}+\frac z\delta\Big)\,d\xi.
\end{align*}
There is no endpoint contribution from the original restriction $d\ge1$: on the support of the integrand, $l\ne0$ and $|z|/\delta=o(1)$ imply
\begin{align*}
\xi\gg\frac1{cp\delta}\ge\frac1{C_0Q_1\delta}\longrightarrow\infty.
\end{align*}
Hence the dyadic partition sums to $1$ throughout this support. Set $t=g_1r\xi$, so that $g_2^{-1}d\xi=(cr)^{-1}dt$, and then put $m=g_1 l$. The correspondence
\begin{align*}
(g_1,l)\longleftrightarrow m,\quad g_1=(m,c),\quad (l,c/g_1)=1,
\end{align*}
is a bijection between the indicated pairs and the nonzero integers $m$. Finally set $j=mr$ and sum over $r\mid|j|$. The support gives
\begin{align*}
|j|\ll cQ\delta,\quad g_1r\le|j|,\quad\xi\asymp\frac{Q_2}{g_1r}\gg\frac1{cQ_1\delta}\gg1.
\end{align*}
In particular, every divisor introduced by this reindexing lies in the original $r$-range.  Thus, we obtain the exact identity
\begin{align}	\label{eq:zero-poisson-exact}
\chi_0(c,z)&=\frac1{\delta Lc}\sum_{\substack{p\in\mathcal P\\p\nmid c}}\int_0^\infty\rho(t/Q_2)
\sum_{j\in\Z\setminus\{0\}}a_p(|j|)\psi\Big(\frac{j}{cpt\delta}+\frac z\delta\Big)\,dt,
\end{align}
where
\begin{align*}
a_p(j)&=\sum_{\substack{r\mid j\\(r,p)=1}}\frac{\mu(r)}r.
\end{align*}

At this point, the zero Poisson frequency has been reduced to the one-dimensional arithmetic average, with all dependence on the reduced residue class removed. It remains to evaluate the weighted mean of the coefficients $a_p(|j|)$ appearing there. The following lemma identifies its main term uniformly in the scale and translation parameters, and hence gives the required asymptotic for $\chi_0(c,z)$.
\begin{lemma}\label{eq:zero-poisson-main-term}
With the notations as above, we have
\begin{align*}
\chi_0(c,z)=\frac{I_\rho}{\zeta(2)L}\sum_{\substack{p\in\mathcal P\\p\nmid c}}\frac{p}{1-p^{-2}}+O\Big(\frac{X^\ee}{cQ\delta}\Big).
\end{align*}
\end{lemma}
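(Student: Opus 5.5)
Starting from the exact identity \eqref{eq:zero-poisson-exact}, I fix $p\in\mathcal P$ with $p\nmid c$ and $t$ in the support of $\rho(t/Q_2)$, and evaluate the inner sum
\begin{align*}
\Sigma_p(t)=\sum_{j\ne0}a_p(|j|)\,\psi\Big(\frac{j}{cpt\delta}+\frac z\delta\Big).
\end{align*}
Write $T=cpt\delta$; then $T\asymp cQ\delta$ and $T\gg1$, since the assumptions behind the exact identity give $cQ_1\delta\gg1$ on the relevant support. Split $j>0$ and $j<0$. For each sign, $\Sigma_p^\pm(t)=\sum_{n\ge1}a_p(n)\psi(\pm n/T+z/\delta)$. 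By partial summation against the smooth, compactly supported function $y\mapsto\psi(\pm y/T+z/\delta)$, I will use the elementary mean
\begin{align*}
A_p(y)=\sum_{n\le y}a_p(n)=\frac{y}{\zeta(2)(1-p^{-2})}+O(\log(2y)),
\end{align*}
recorded in the introduction. I obtain it by swapping sums: $\sum_{(r,p)=1}\mu(r)r^{-1}\lfloor y/r\rfloor$, where the main term is $y\sum_{(r,p)=1}\mu(r)r^{-2}=y/(\zeta(2)(1-p^{-2}))$ and the error is $O(\sum_{r\le y}1/r)$. This gives
\begin{align*}
\Sigma_p^\pm(t)=\frac{1}{\zeta(2)(1-p^{-2})}\int_0^\infty\psi\Big(\pm\frac yT+\frac z\delta\Big)dy+O\Big(\log(2T)\int_0^\infty\Big|\frac{d}{dy}\psi\Big(\pm\frac yT+\frac z\delta\Big)\Big|dy\Big).
\end{align*}
The total variation of the cutoff is $O(1)$, so the error is $O(\log X)$.

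Summing the two signs, the main integrals combine to $T\int_{\R}\psi(s+z/\delta)\,ds=T$, because $\smallint\psi=1$ and the translate by $z/\delta$ keeps the full support inside $\R$. The half-line boundary at $y=0$ causes no issue: summing both signs exactly reconstitutes the integral over $\R$. The only subtlety is that $j=0$ is excluded, but a single term costs $O(1)$. Hence
\begin{align*}
\Sigma_p(t)=\frac{cpt\delta}{\zeta(2)(1-p^{-2})}+O(X^\ee).
\end{align*}

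Inserting this into \eqref{eq:zero-poisson-exact}, the main term becomes
\begin{align*}
\frac{1}{\delta Lc}\sum_{p}\frac{cp\delta}{\zeta(2)(1-p^{-2})}\int_0^\infty t\rho(t/Q_2)\,dt=\frac{I_\rho}{\zeta(2)L}\sum_p\frac{p}{1-p^{-2}},
\end{align*}
as claimed. For the error term, there are $O(Q_1)$ primes and the $t$-integral has length $O(Q_2)$, so the total error is
\begin{align*}
O\Big(\frac{X^\ee Q_1Q_2}{\delta Lc}\Big)=O\Big(\frac{X^\ee}{cQ\delta}\Big),
\end{align*}
using $L=Q^{2+o(1)}$.

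The main point requiring care is uniformity in $z$ and $p$: the partial-summation error must not depend on the shift $z/\delta$. It does not, because the variation of a translated $\psi$ is translation-invariant and $|z|/\delta=o(1)$. The error in $A_p$ is also uniform in $p$, since $\sum_{r\le y}1/r$ is independent of $p$.
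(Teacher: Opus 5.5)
Your proof is correct and is essentially the paper's argument with the two elementary steps done in the opposite order: the paper opens $a_p(|j|)$, writes $j=rm$, and applies the summation formula to $\sum_{m\ne0}\psi(rm/Z+k)$ for each $r\le 2Z$, whereas you first prove the mean value of $a_p$ and then partially sum against the smooth cutoff on each half-line (the route the paper itself takes in the quadratic-form section); both give $T/(\zeta(2)(1-p^{-2}))+O(\log(2+T))$ uniformly in $p$ and in $|z|/\delta\le 1/4$. One slip: the claim $cQ_1\delta\gg1$ is false (the paper actually assumes $C_0Q_1\delta=o(1)$), but you never need $T\gg1$, because $A_p(y)=0$ for $0\le y<1$, so the mean-value error is $O(\log(2+y))$ for all $y\ge0$ and your partial-summation error is $O(\log(2+T))$ for every $T>0$.
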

\begin{proof}
Assume $Z> 0$ and $|k|\leq 1/4$. Opening the divisor sum and writing $j=rm$, we obtain
\begin{align*}
\sum_{j\ne0}a_p(|j|)\psi(j/Z+k)&=\sum_{\substack{r\geq1\\(r,p)=1}}\frac{\mu(r)}r\sum_{m\ne0}\psi\Big(\frac{rm}{Z}+k\Big).
\end{align*}
Since $\psi$ is supported on $[-1,1]$ and $|k|\leq1/4$, a nonzero term in the inner sum satisfies
\begin{align*}
\Big|\frac{rm}{Z}\Big|\leq	1+|k|\leq\frac54.
\end{align*}
As $m\ne0$, this implies $r\leq 5Z/4$, and in particular $r\leq2Z$. Hence
\begin{align*}
\sum_{j\ne0}a_p(|j|)\psi(j/Z+k)&=\sum_{\substack{r\leq2Z\\(r,p)=1}}\frac{\mu(r)}r\sum_{m\ne0}\psi\Big(\frac{rm}{Z}+k\Big).
\end{align*}
We next estimate the inner sum. Put $t=r/Z$. By the elementary summation formula for a compactly supported smooth function, we have
\begin{align*}
\sum_{m\in\mathbb Z}\psi(tm+k)=\frac1t\int_{\mathbb R}\psi(y)\,dy+O_\psi(1),
\end{align*}
uniformly for $t>0$ and $k\in\mathbb R$. Indeed, if $f(x)=\psi(tx+k)$, then
\begin{align*}
\int_{\mathbb R}f(x)\,dx=\frac1t\int_{\mathbb R}\psi(y)\,dy,
\end{align*}
while the error in replacing the integral by the sum over integers is bounded by
\begin{align*}
\int_{\mathbb R}|f'(x)|\,dx=\int_{\mathbb R}|\psi'(y)|\,dy\ll_\psi1.
\end{align*}
Since $\int\psi=1$, we obtain
\begin{align*}
\sum_{m\in\mathbb Z}\psi\Big(\frac{rm}{Z}+k\Big)=\frac Zr+O_\psi(1).
\end{align*}
Removing the term $m=0$ changes the left-hand side by $\psi(k)=O_\psi(1)$, and therefore
\begin{align*}
\sum_{m\ne0}\psi\Big(\frac{rm}{Z}+k\Big)=\frac Zr+O_\psi(1),
\end{align*}
uniformly for $r\leq2Z$ and $|k|\leq1/4$. Substituting this estimate gives
\begin{align*}
\sum_{j\ne0}a_p(|j|)\psi(j/Z+k)&=Z\sum_{\substack{r\leq2Z\\(r,p)=1}}\frac{\mu(r)}{r^2}+O\Big(\sum_{r\leq2Z}\frac1r\Big)\\
&=Z\sum_{\substack{r\leq2Z\\(r,p)=1}}\frac{\mu(r)}{r^2}+O(\log(2+Z)).
\end{align*}
Since the series over $r$ converges absolutely, we may extend the main sum to infinity at a cost $O(1)$. Finally, we derive
\begin{align*}
\sum_{\substack{r\geq1\\(r,p)=1}}\frac{\mu(r)}{r^2}=\prod_{l\ne p}\Big(1-\frac1{l^2}\Big)=\frac1{\zeta(2)}\Big(1-\frac1{p^2}\Big)^{-1}.
\end{align*}
Consequently, we obtain
\begin{align}\label{eq:ap-weighted-mean}
\sum_{j\ne0}a_p(|j|)\psi(j/Z+k)=\frac{Z}{\zeta(2)(1-p^{-2})}+O(\log(2+Z)),
\end{align}
uniformly in $p$ and $|k|\leq1/4$. 
Applying \eqref{eq:ap-weighted-mean} in \eqref{eq:zero-poisson-exact}, with $Z=cpt\delta$ and $k=z/\delta$, yields this lemma.
\end{proof}

To turn the preceding asymptotic into cancellation with the original constant multiplier $1$, it remains to compare its main term with the Jutila normalization $L$. We therefore evaluate $L$ to the same precision, retaining the local factor $(1-p^{-2})^{-1}$.
\begin{lemma}\label{eq:jutila-normalization-precise}
With the notations as above, we have 
\begin{align*}
1=\frac{I_\rho}{\zeta(2)L}\sum_{\substack{p\in\mathcal P}}\frac{p}{1-p^{-2}}+O(X^\ee Q_1^{-1}+X^\ee Q_2^{-1}).
\end{align*}
\end{lemma}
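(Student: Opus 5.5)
We need $L=\sum_q\varphi(q)w(q)$ with $w(q)=\sum_{p\in\mathcal P}\sum_{pt=q}\rho(t/Q_2)$, and we want to show
\[
L=\frac{I_\rho}{\zeta(2)}\sum_{p\in\mathcal P}\frac{p}{1-p^{-2}}\bigl(1+O(X^\ee Q_1^{-1}+X^\ee Q_2^{-1})\bigr).
\]
Dividing by $L$ then gives the statement. Throughout, $L=Q^{2+o(1)}$.

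\textbf{Step 1: swap sums.} Interchanging the order of summation gives
\[
L=\sum_{p\in\mathcal P}\sum_{t\ge1}\varphi(pt)\rho(t/Q_2).
\]
Split according to whether $p\mid t$ or $(t,p)=1$. For $p\mid t$ (that is, $t=pt'$, which needs $t\ge p\asymp Q_1$), use the trivial bound $\varphi(pt)\le pt$. This part is
\[
\ll \sum_p pQ_2\cdot\frac{Q_2}{p}\ll Q_1Q_2^2/\log Q_1 .
\]
Relative to the main size $Q_1^2Q_2^2$, this is $O(Q_1^{-1})$. For $(t,p)=1$, multiplicativity gives $\varphi(pt)=(p-1)\varphi(t)$.

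\textbf{Step 2: evaluate the coprime sum.} We need
\[
\sum_{(t,p)=1}\varphi(t)\rho(t/Q_2).
\]
Write $\varphi(t)=t\sum_{r\mid t}\mu(r)/r$, so the sum becomes
\[
\sum_{(r,p)=1}\frac{\mu(r)}{r}\sum_{(m,p)=1} rm\,\rho(rm/Q_2).
\]
Remove the condition $(m,p)=1$ by a single inclusion--exclusion step at the prime $p$; the subtracted terms are a $1/p$-fraction, with an exact local factor. Then evaluate $\sum_m rm\,\rho(rm/Q_2)$ by the same Euler--Maclaurin argument as in Lemma \ref{eq:zero-poisson-main-term}: it equals $I_\rho/r+O(Q_2)$. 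Summing over $r\le Q_2$ gives
\[
I_\rho\sum_{(r,p)=1}\frac{\mu(r)}{r^2}\Bigl(1-\frac1p\Bigr)^{\pm}\ \text{(local factors)}+O(Q_2\log Q_2).
\]

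It is cleaner to organize this directly. Let $f_p(t)=\varphi(pt)/p$ restricted to $(t,p)=1$, plus the $p\mid t$ part, and compute the Dirichlet series
\[
\sum_{(t,p)=1}\frac{\varphi(t)}{t^s}=\frac{\zeta(s-1)}{\zeta(s)}\cdot\frac{1-p^{-s}}{1-p^{1-s}}.
\]
Its residue at $s=2$ is
\[
\frac{1}{\zeta(2)}\cdot\frac{1-p^{-2}}{1-p^{-1}}.
\]
Multiplying by $(p-1)=p(1-p^{-1})$ gives a main term
\[
\frac{I_\rho}{\zeta(2)}\,p(1-p^{-2}).
\]

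\textbf{Obstacle: the local factor.} This is the main thing to check. We get $p(1-p^{-2})$, while the statement has $p/(1-p^{-2})$. The two differ by the factor $(1-p^{-2})^{2}=1+O(p^{-2})$, which is $1+O(Q_1^{-2})$ and is absorbed into the error $O(Q_1^{-1})$. So whichever local factor the exact computation produces, it matches the claimed one up to $O(Q_1^{-2})$. Including the $p\mid t$ terms only shifts things by another $O(Q_1^{-1})$.

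\textbf{Step 3: collect errors.} The error in Step 2 is $O(pQ_2\log Q_2)$ per $p$. Relative to the main term $pQ_2^2$, this is $O(X^\ee Q_2^{-1})$. Summing over $p\in\mathcal P$ and dividing by $L$, with $\sum_p p\asymp Q_1^2/\log Q_1$ and $I_\rho\asymp Q_2^2$, yields
\[
L=\frac{I_\rho}{\zeta(2)}\sum_{p\in\mathcal P}\frac{p}{1-p^{-2}}\Bigl(1+O(X^\ee Q_1^{-1}+X^\ee Q_2^{-1})\Bigr),
\]
which is equivalent to the lemma.
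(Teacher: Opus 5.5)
Your argument is correct and follows essentially the same route as the paper: Möbius-expand $\varphi$, evaluate $\sum_k k\rho(dk/Q_2)=I_\rho/d^2+O(Q_2/d)$ by Euler--Maclaurin, and sum the resulting Euler product. The one structural difference is that the paper uses the identity $\varphi(pt)=(p-1)t\sum_{d\mid t,\,(d,p)=1}\mu(d)/d$, which holds for \emph{all} $t$, including $p\mid t$. It therefore needs no split and obtains the exact factor $(p-1)/(1-p^{-2})$; your split instead discards the $p\mid t$ terms at the admissible relative cost $O(Q_1^{-1})$.

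One correction is needed. In your Dirichlet series the Euler factor at $p$ must be divided out, not multiplied in:
\[
\sum_{(t,p)=1}\varphi(t)t^{-s}=\frac{\zeta(s-1)}{\zeta(s)}\cdot\frac{1-p^{1-s}}{1-p^{-s}},
\]
with residue $\frac{1}{\zeta(2)}\cdot\frac{1-p^{-1}}{1-p^{-2}}$ at $s=2$. This is also what your inclusion--exclusion computation gives, with exponent $+1$ on $1-\frac1p$. Hence the coprime part is $\frac{I_\rho}{\zeta(2)}\cdot\frac{p(1-p^{-1})^2}{1-p^{-2}}$. This differs from $\frac{p}{1-p^{-2}}$ by the factor $(1-p^{-1})^2=1+O(p^{-1})$, not $1+O(p^{-2})$ as you claimed. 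The discrepancy is still absorbed by the $O(X^\ee Q_1^{-1})$ term, so your conclusion stands.
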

\begin{proof}
We first note that
\begin{align}\label{phi(pt)}
\varphi(pt)=(p-1)t\sum_{\substack{d\mid t\\(d,p)=1}}\frac{\mu(d)}d.
\end{align}
Put $f_d(x)=x\rho(dx/Q_2)$. By the summation formula, we have
\begin{align*}
\sum_{k\geq1}f_d(k)=\int_0^\infty f_d(x)\,dx+O\Big(\sup_x|f_d(x)|+\int_0^\infty|f_d'(x)|\,dx\Big).
\end{align*}
Since $\rho$ is supported in a fixed compact subinterval of $(0,\infty)$, the function
$f_d$ is supported on $x\asymp Q_2/d$. Hence, we obtain
\begin{align*}
\sup_x|f_d(x)|+\int_0^\infty|f_d'(x)|\,dx\ll\frac{Q_2}{d}.
\end{align*}
Moreover, after the change of variables $t=dx$, we get
\begin{align*}
\int_0^\infty x\rho(dx/Q_2)\,dx=\frac1{d^2}\int_0^\infty t\rho(t/Q_2)\,dt=\frac{I_\rho}{d^2}.
\end{align*}
Thus, for  $d\ll Q_2$, we have
\begin{align*}
\sum_{k\geq1}k\rho(dk/Q_2)=\frac{I_\rho}{d^2}+O\Big(\frac{Q_2}{d}\Big).
\end{align*} 
Using the preceding identity and writing $t=dk$, we obtain
\begin{align*}
L&=\sum_{p\in\mathcal P}\sum_{t\geq1}\varphi(pt)\rho(t/Q_2)=\sum_{p\in\mathcal P}(p-1)\sum_{t\geq1}t\rho(t/Q_2)\sum_{\substack{d\mid t\\(d,p)=1}}\frac{\mu(d)}d\\
&=\sum_{p\in\mathcal P}(p-1)\sum_{\substack{d\geq1\\(d,p)=1}}\mu(d)\sum_{k\geq1}k\rho(dk/Q_2)\\
&=I_\rho\sum_{p\in\mathcal P}(p-1)\sum_{\substack{d\ll Q_2\\(d,p)=1}}\frac{\mu(d)}{d^2}+O\Big(Q_2\sum_{p\in\mathcal P}p\sum_{d\ll Q_2}\frac1d\Big).
\end{align*}
Since $I_\rho\asymp Q_2^2$, extending the $d$-sum in the main term to infinity costs $Q_2$. The tail contributes $O(pQ_2)$ for each $p$, and is absorbed by the preceding error. Finally, we get
\begin{align*}
\sum_{\substack{d\geq1\\(d,p)=1}}\frac{\mu(d)}{d^2}=\prod_{l\neq p}\Big(1-\frac1{l^2}\Big)=\frac1{\zeta(2)(1-p^{-2})}.
\end{align*}
Consequently, we arrive at
\begin{align*}
1=\frac{I_\rho}{\zeta(2)L}\sum_{p\in\mathcal P}\frac{p-1}{1-p^{-2}}+O\Big(L^{-1}Q_2\log(2Q_2)\sum_{p\in\mathcal P}p\Big).
\end{align*}
The tail in the $d$-sum contributes $O(pQ_2)$ for each $p$ and is included in this error. 
Replacing $p-1$ by $p$ has relative cost $O(Q_1^{-1})$.  This completes the proof of the lemma.
\end{proof}

The only discrepancy between the prime sums in Lemmas \ref{eq:zero-poisson-main-term} and \ref{eq:jutila-normalization-precise} is the exclusion of the primes $p\mid c$ in Lemma \ref{eq:zero-poisson-main-term}. Since at most $O(\log(2c)/\log Q_1)=O(1)$ primes in $\mathcal P$ divide $c$, whereas $\#\mathcal P\gg Q_1/\log Q_1$, their total contribution is $O(X^\varepsilon Q_1^{-1})$ relative to the main term. Hence, comparing Lemmas \ref{eq:zero-poisson-main-term} and \ref{eq:jutila-normalization-precise},  we conclude that
\begin{align}\label{eq:zero-poisson-cancellation-detailed}
|1-\chi_0(c,z)|\ll X^\ee\Big(\frac1{cQ\delta}+\frac1{Q_1}+\frac1{Q_2}\Big).
\end{align}
We now return to the character sum in \eqref{eq:major-arc-character-sum}. Its contribution from $1$ and the zero Poisson frequency is exactly
\begin{align*}
(1-\chi_0(c,z))\sideset{}{^{*}}\sum_{b\bmod c}e_c(u\bar b-hb)S(-\bar b,\pm n_2;c/n_1).
\end{align*}
The multiplier is independent of $b$, so \eqref{eq:zero-poisson-cancellation-detailed} may be applied before the pointwise character-sum bound. For completeness,  we write $c=c_{\mathrm{sf}}c_{\mathrm{sq}}$, with $c_{\mathrm{sf}}$ squarefree, $c_{\mathrm{sq}}$ squarefull and $(c_{\mathrm{sf}},c_{\mathrm{sq}})=1$. Lemma \ref{lem:blomer-li-pointwise} bounds the complete sum by $X^\ee c c_{\mathrm{sq}}^{1/2}$. On a dyadic block $N\ll X^\ee c^3/(n_1^2X)$, \eqref{lem:hecke-moments} gives
\begin{align*}
\sum_{n_2\asymp N}\frac{|\lambda_\pi(n_2,n_1)|}{\sqrt n_2}\ll X^\ee n_1^{1/2}N^{1/2}\ll X^\ee\frac{c^{3/2}}{n_1^{1/2}X^{1/2}}.
\end{align*}
The common Voronoi and $z$ factor is $X^{1/2+\ee}c^{-3/2}$. After summing over $n_1\mid c$, the signs, the dyadic blocks and $u$, the fixed-$c$ bound is therefore $X^\ee c c_{\mathrm{sq}}^{1/2}$ times the right side of \eqref{eq:zero-poisson-cancellation-detailed}. 
Thus the combined contribution of $1$ and the zero Poisson frequency in \eqref{eq:s2-major-arc-master} is
\begin{align*}
\ll X^\ee\Big(\frac{C_0}{Q\delta}+\frac{C_0^2}{Q_1}+\frac{C_0^2}{Q_2}\Big)\ll X^\ee\Big(\frac{XC_0}{Q}+\frac{C_0^2}{Q_1}+\frac{C_0^2}{Q_2}\Big).
\end{align*}
Combining this with \eqref{eq:nonzero-poisson-total} proves the following estimate.

\begin{prop}\label{prop:s2-major-arcs}
Assume \eqref{eq:major-arc-range}, \eqref{eq:eliminate-first-two-branches}, and suppose that $4C_0<Q$, $C_0Q_1\delta=o(1)$, $C_0^2\delta X^{\eta_0}=o(1)$ and   $C_0Q_1^2\delta X^{\eta_0}=o(1)$. Then we have
\begin{align*}
S_{2,0}\ll X^\ee\Big(\frac{XC_0}{Q}+\frac{C_0^2}{Q_1}+\frac{C_0^2}{Q_2}\Big).
\end{align*}
Consequently, together with \eqref{S_21-bound-BL}, we have
\begin{align*}
S_2\ll X^\ee\biggl(\frac{X^2}{QC_0}+\frac{XC}{Q}+\frac{XC_0}{Q}+\frac{C_0^2}{Q_1}+\frac{C_0^2}{Q_2}\biggr).
\end{align*}
\end{prop}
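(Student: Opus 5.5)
The proposition is essentially an assembly of the estimates already obtained in this subsection, so the plan is to decompose $S_{2,0}$ according to the structure of $1-\chi(b/c+z)$ in \eqref{eq:major-arc-character-sum} and check that every piece is covered under the stated hypotheses. First, starting from \eqref{eq:s2-major-arc-master}, I insert \eqref{chi-repr} for the $\chi$-part and split the circle frequencies into $l=0$ and $l\neq0$. The $l=0$ multiplier is exactly $\psi(z/\delta)w(c)/(\delta L)$, which vanishes because $4C_0<Q$ and $w$ is supported on $[Q/4,Q]$. For $l\neq0$, the branch decomposition into $B_1,B_2,B_3$ applies. Hypothesis \eqref{eq:eliminate-first-two-branches} gives $cY\delta\ll C_0Q_2\delta=o(1)$ in $B_1,B_2$, so no nonzero $l$ survives there, and only $B_3$ remains.

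Second, in $B_3$ I apply Poisson summation in $d$ modulo $g_2$ and split the dual frequency into $\nu\neq0$ and $\nu=0$. Frequencies with $|\nu|>HX^{\eta_0}$ contribute $O(X^{-A})$. For $0<|\nu|\ll HX^{\eta_0}$, I pack the coefficients into $\beta_{p,k}$ as in \eqref{eq:packed-coefficients}. The assumptions $C_0^2\delta X^{\eta_0}=o(1)$ and $C_0Q_1^2\delta X^{\eta_0}=o(1)$ let me invoke Lemma \ref{lem:packed-row-injectivity}, so the rows $d_0-g_1k\bar p$ are distinct. The condition $N/q<1$, which follows from \eqref{eq:major-arc-range}, means each block $n_2\asymp N$ meets every residue class mod $q$ at most once. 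With these two facts, I apply Lemma \ref{lem:uniform-operator-bound} to the whole packed vector, then Cauchy--Schwarz in $n_2$ with \eqref{lem:hecke-moments}, and finally Lemma \ref{lem:packed-coefficient-energy}. Summing over $c,g_1,r,Y$, the signs, $n_1$ and the $u$-sum (weighted by $1/(1+|u|)$, costing only $\log$) yields \eqref{eq:nonzero-poisson-total}, namely $X^{1+\ee}C_0/Q$.

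Third, I group the $\nu=0$ term of $-\chi$ with the constant $1$. The resulting multiplier $1-\chi_0(c,z)$ is independent of $b$. Lemmas \ref{eq:zero-poisson-main-term} and \ref{eq:jutila-normalization-precise} give \eqref{eq:zero-poisson-cancellation-detailed}, using $C_0Q_1\delta=o(1)$ to remove the endpoint at $d\ge1$ and the $O(1)$ primes $p\mid c$. I then bound the remaining complete sum pointwise by Lemma \ref{lem:blomer-li-pointwise} and sum over $n_2,n_1,u$ and $c\le C_0$. Using $\sum_{c\le C}c\,c_{\mathrm{sq}}^{1/2}\ll C^{2+\ee}$ (squarefull numbers are sparse), this gives $X^\ee(C_0/(Q\delta)+C_0^2/Q_1+C_0^2/Q_2)$, and $\delta^{-1}=X^{1-\ee_0}$ converts the first term to $XC_0/Q$.

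Adding the three pieces gives the bound for $S_{2,0}$, and adding \eqref{S_21-bound-BL} gives the bound for $S_2=S_{2,0}+S_{2,1}$. The main obstacle is the bookkeeping in the second step. One must make sure the packed family really shares a common $n_1$, $n_2$ and Voronoi weight, so the operator bound can be applied before summing over $p$ and $l$. One must also check that the dyadic and $r$ sums, with $Y\ll Q_2/(g_1r)$, produce only $\sum_{c}\sum_{g_1|c}\sum_r 1/(\sqrt{g_1}r)\ll X^\ee C_0$ and no extra factor.
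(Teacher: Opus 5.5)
Your proposal is correct and follows the paper's own argument step by step. The steps match: vanishing of the $l=0$ circle frequency, elimination of $B_1,B_2$ via $C_0Q_2\delta=o(1)$, and the packed matrix-norm bound for the nonzero Poisson frequencies through Lemmas \ref{lem:packed-row-injectivity}, \ref{lem:uniform-operator-bound} and \ref{lem:packed-coefficient-energy}; the constant term is likewise combined with the zero Poisson frequency via Lemmas \ref{eq:zero-poisson-main-term} and \ref{eq:jutila-normalization-precise} before the pointwise bound, and the only cosmetic slip is that the $O(1)$ primes $p\mid c$ are removed by a simple count, not by the hypothesis $C_0Q_1\delta=o(1)$, which serves only to remove the endpoint at $d\ge1$.
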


\subsection{Estimation of  $S_1$} \label{sec-S1-BL}

We now turn to the contribution from $S_1$. In this subsection we follow the  argument of Blomer and Li \cite[Section 3.2]{Blomer-Li-2025}, specialized to $\lambda_1=\lambda_2=1$. We include the details needed below. 

From Lemma \ref{Jutila circle method}, we have
\begin{align*}	
\begin{aligned}
S_1=&\frac1{\delta L}\sum_{q\le Q}w(q)\sideset{}{^{*}}\sum_{a\bmod q}\int_{\R}\psi(z/\delta)e(-(a/q+z)h)\\
&\times\sum_m d(m)W_0(m)e\Big(\Big(\f aq+z\Big)m\Big)\sum_n\lambda_\pi(1,n)W(n)e\Big(-\Big(\f aq+z\Big)n\Big)\,dz.
\end{aligned}
\end{align*}

Apply Lemma \ref{Voronoi-d(n)} to the $m$-sum and Lemma \ref{voronoi3} to the $n$-sum. With $\delta=X^{-1+\ee_0}$, the factors involving $z$ are slowly varying on the supports of the fixed weights. Place $n_1\asymp H$ and $n_2\asymp N$. By Remark \ref{rem-Gz}, up to a negligible error, we get $H^2N\ll X^{-1+\ee}{Q^3}$. As in \cite[Section 3.2]{Blomer-Li-2025}, combine the divisor main term with the dual divisor terms by allowing an index $m\in\Z$, with $m=0$ representing the main term. The resulting weight satisfies
\begin{align}	\label{eq:s1-weight-bound}
\Omega_{n_1,n_2}(m,q)\ll_A X^{\ee}\Big(1+\frac{|m|X}{q^2}\Big)^{-A}\Big(1+\frac{n_1^2n_2X}{q^3}\Big)^{-A}.
\end{align}
Consequently $|m|\ll X^\ee Q^2/X$ on the effective support.

After collecting the standard powers from the two Voronoi transformations, a dyadic block $S_1(N,H)$ is bounded, up to $X^\ee$, by an expression of the form
\begin{align}\label{eq:s1-dyadic-block}
\frac{X^{3/2}}{\delta LQ^{3/2}}\int_{|z|\ll\delta}\sum_{q=pt\asymp Q}\sum_{\substack{n_1\asymp H\\n_1\mid q}}\sum_{n_2\asymp N}\frac{|\lambda_\pi(n_2,n_1)|}{\sqrt{n_2}}\Big|\sum_mS_{h,m,n_1,\pm n_2}(q)\Omega_{n_1,n_2}(m,q)\Big|\,dz.
\end{align}
In the large-$H$ range this form is estimated pointwise.  For the small-$H$ range, we return to the expression preceding the triangle inequality in the $q=pt$-sum and keep the distinguished prime sum inside the absolute value.
 
We now consider the large-$H$ range. Applying Lemma \ref{lem:blomer-li-pointwise}, summing $m$ by \eqref{eq:s1-weight-bound}, and using \eqref{lem:hecke-moments}, one obtains (see \cite[(3.21)]{Blomer-Li-2025})
\begin{align*}
S_1(N,H)\ll X^\ee\Big(\frac XH+\frac{Q^2}{H}\Big).
\end{align*}
Thus, after fixing the constants in the dyadic partition, the range $H\ge Q_1/10$ satisfies
\begin{align*}
S_1(N,H)\ll X^\ee\Big(\frac X{Q_1}+\frac{Q^2}{Q_1}\Big).
\end{align*}
Under $Q\ge X^{1/2}$, the first term is dominated by $X^{1/2}Q/Q_1^{1/2}$, while the second is the middle term of $S_1$ in \cite{Blomer-Li-2025}.

We then consider the range $H\le Q_1/10$. 
Since $q=pt$, $p\asymp Q_1$, and $n_1\mid q$, condition $H\le Q_1/10$ forces $p\nmid n_1$ and therefore $n_1\mid t$. Together with $p\nmid h$, this verifies $(hn_1,p)=1$, the coprimality hypothesis needed in Lemma \ref{lem:blomer-li-correlation}.

At this point, the distinguished prime sum must be kept inside the absolute value. After fixing $t,n_1$ and the remaining smooth parameters, the relevant structure is
\begin{align*}
\sum_{n_2}\omega_{n_2}\sum_{p\asymp Q_1}\sum_mS_{h,m,n_1,\pm n_2}(pt)\Omega_{n_1,n_2}(m,pt).
\end{align*}
Apply Cauchy--Schwarz to the $n_2$-sum, use the Rankin--Selberg theory, and open the square. The resulting inner sum contains two primes $p_1,p_2\asymp Q_1$, two divisor-dual variables $m_1,m_2$, and a product of two $S$-sums. Poisson summation in $n_2$ modulo $[p_1,p_2]t$ gives
\begin{align*}
\frac{N}{[p_1,p_2]t}\sum_{k\in\Z}T(h,m_1,m_2,n_1,k,p_1,p_2,t)I(k),
\end{align*}
where $T$ is Lemma \ref{lem:blomer-li-correlation}, up to harmless unit rescalings, and $I(k)$ is the Fourier transform of the smooth $n_2$-weight.  Apply Lemma \ref{lem:blomer-li-correlation}. The zero Poisson frequency is present only when $p_1=p_2$; its diagonal $m_1=m_2$ is separated, and the remaining zero-frequency contribution contains $(m_1-m_2,p_1)$. The nonzero frequencies are bounded by Lemma \ref{lem:blomer-li-correlation}. Using the ranges of $m_i$, the elementary estimate
\begin{align*}
\sum_{|m_1|,|m_2|\ll M}(m_1-m_2,p)\ll M^2+Mp
\end{align*}
with the diagonal treated separately, one obtains (see also \cite[(3.27)--(3.29)]{Blomer-Li-2025})
\begin{align*}	
\begin{aligned}
S_1(N,H)\ll&X^\ee\Big(\frac{X^{1/2}Q}{Q_1^{1/2}}+\frac{Q^2}{Q_1}+\frac{X^{1/2}Q}{Q_2^{1/4}}\Big).
\end{aligned}
\end{align*}
Combining the small- and large-$H$ ranges proves (see \cite[(3.29)]{Blomer-Li-2025})
\begin{align}	\label{eq:s1-final-bound}
S_1\ll X^\ee\Big(\frac{X^{1/2}Q}{Q_1^{1/2}}+\frac{Q^2}{Q_1}+\frac{X^{1/2}Q}{Q_2^{1/4}}\Big).
\end{align}

\subsection{The endgame}

Fix the exponent $\ee>0$ in the theorem. Choose $0<\eta<\min(\ee/10,1/100)$ and take
\begin{align*}
Q_1=X^{2/11},\quad Q_2=X^{4/11},\quad Q=X^{6/11},\quad C_0=X^{1/2-\eta},\quad C=X^{1/2+\eta}.
\end{align*}
The major-arc hypotheses \eqref{eq:major-arc-range} follow directly from this choice. The remaining support and injectivity conditions follow from the identities
\begin{align*}
\frac{C_0^2}{X}=\frac{X}{C^2}=X^{-2\eta},\quad\frac{C_0Q_1}{X}\ll\frac{C_0Q_1^2}{X}=\frac{C_0Q_2}{X}=X^{-\f{3}{22}-\eta},\quad\frac{C_0}{Q}=X^{-\f{1}{22}-\eta}.
\end{align*}
After $\eta$ is fixed, choose the auxiliary constants in $\delta=X^{-1+\ee_0}$, the $z$-truncation, and the Poisson truncation sufficiently small, in particular with $\ee_0+\eta_0<\eta$ and $\ee_1<\ee_0/10$.
Indeed, under the above choice of parameters,
\begin{align*}
C_0^2\delta X^{\eta_0},\quad C_0Q_1^2\delta X^{\eta_0},\quad C_0Q_2\delta,\quad C_0Q_1\delta
\end{align*}
are all $o(1)$. Hence the first two branches have no nonzero circle frequency, the packed row map is injective, and the zero-frequency reindexing has no endpoint contribution. Moreover, $4C_0<Q$ for sufficiently large $X$, so the zero circle frequency vanishes, while $Q/X^{1/2}=X^{1/22}>1$, as required in the $S_1$ estimate. Finally, the effective major-arc range satisfies $|z|/\delta\ll X^{\ee_1-\ee_0}=o(1)$.

Under this choice, it follows from Proposition \ref{prop:s2-major-arcs} and \eqref{eq:s1-final-bound} that
\begin{align*}
S_1+S_2\ll X^{21/22+\eta+\ee/2}\ll X^{21/22+\ee}.
\end{align*}
This proves Theorem \ref{B-L}.

\section{Preliminaries on binary quadratic forms}

Let $\mathcal{Q}$ be a fixed positive definite integral binary quadratic form. We write
\begin{align*}
\mathcal{Q}(\mathbf{x})=\frac{1}{2}\mathbf{x}'A\mathbf{x},\quad D=\det A,
\end{align*}
where $A$ is a symmetric positive definite integral $2\times2$ matrix with even diagonal entries. Let
\begin{align*}
r(n,\mathcal{Q})=\#\big\{\mathbf{x}\in\mathbb{Z}^2:\mathcal{Q}(\mathbf{x})=n\big\}.
\end{align*}
For every $\ee>0$, we have the standard bound
\begin{align}\label{indi-r}
r(n,\mathcal{Q})\ll_{\mathcal{Q},\ee} n^\ee.
\end{align}

We first recall the Voronoi summation formula for $r(n,\mathcal{Q})$ in the form needed below. Given $D$ and $q$, we define 
\begin{align*}
\delta_0=\prod_{{\tt ord}_pD\leq{\tt ord}_pq}p^{{\tt ord}_pD},\quad\delta_1=\prod_{{\tt ord}_pD>{\tt ord}_pq}p^{{\tt ord}_pq}.
\end{align*}
Then $(D,q)=\delta_0\delta_1.$  We further write $D_0:=D/\delta_0$, and $q_1:=q/\delta_1$.
In addition, let
\begin{equation*}
G_{\mathcal{Q}}(q,a)=\sum_{\mathbf{x} \bmod q}e\Big(\f{a\mathcal{Q}(\mathbf{x})}{q}\Big)
\end{equation*}
be a Gaussian sum, where the condition of summation means that each component of ${\bf x}$ runs over a complete system of residues $\bmod q$.

The following is the binary specialization of the Voronoi formula in \cite[Theorem 1.4]{HL-exp-sums}.

\begin{lemma}\label{prop-vonoi-formula}
Let $(q,a)=1,$ and $\phi$ be a smooth function compactly supported on $(0,\infty)$. We have
\begin{align*}
\sum_{n=1}^{\infty}r(n,\mathcal{Q})e_q(an)\phi(n)={}&\frac{2\pi}{q^2\sqrt{D}}\Big(G_{\mathcal{Q}}(q,a)\widetilde{\phi}(1)+
\sum_{n=1}^{\infty}\widetilde{r}_{\mathcal{Q}^{*}}(n)e_{q}\big(-\overline{aD_0}n\big)F\Big(\f{n}{q^2}\Big)\Big),
\end{align*}
where $\widetilde{\phi}$ is the Mellin transform of $\phi$, $\widetilde{r}_{\mathcal{Q}^{*}}(n)$ is defined by
\begin{equation*}
\widetilde{r}_{\mathcal{Q}^{*}}(n):=\sum_{\substack{\mathcal{Q}^{*}({\bf x})=n}}\;\sum_{{\bf y} \bmod q}e_q(a\mathcal{Q}({\bf y}))e_{\delta_1}({\bf y}\cdot gVB{\bf x}),
\end{equation*}  
and
\begin{equation*}	
F\Big(\f{n}{q^2}\Big):=\int_0^\infty \phi(x)J_0\Big(\frac{4\pi\sqrt{nx}}{q\sqrt{D_0}}\Big)dx.
\end{equation*}  
Here $g$ is an integer such that $gq_1\equiv1(\bmod \delta_1)$, and $V,B,Q^*$, and $\widetilde r_{Q^*}$ are as defined in \cite[Theorem 1.4]{HL-exp-sums}.
\end{lemma}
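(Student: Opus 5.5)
The statement is the binary specialization of the general Voronoi formula of \cite[Theorem 1.4]{HL-exp-sums}. I would therefore derive it by specialization, not from scratch. The first step is to express the generating function through a theta series. I write
\[
\sum_n r(n,\mathcal{Q})e_q(an)\phi(n)=\sum_{\mathbf{x}\in\Z^2}e_q(a\mathcal{Q}(\mathbf{x}))\phi(\mathcal{Q}(\mathbf{x})),
\]
split $\mathbf{x}$ into residue classes $\mathbf{x}\equiv\mathbf{y}\bmod q$, and apply two-dimensional Poisson summation to each class. This gives
\[
\frac{1}{q^2}\sum_{\mathbf{y}\bmod q}e_q(a\mathcal{Q}(\mathbf{y}))\sum_{\mathbf{m}\in\Z^2}e_q(\mathbf{m}\cdot\mathbf{y})\int_{\R^2}\phi(\mathcal{Q}(\mathbf{u}))e\Big(-\frac{\mathbf{m}\cdot\mathbf{u}}{q}\Big)d\mathbf{u}.
\]

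For $\mathbf{m}=0$ the complete sum is $G_{\mathcal{Q}}(q,a)$. The integral becomes $\int\phi(\mathcal{Q}(\mathbf{u}))d\mathbf{u}=(2\pi/\sqrt D)\int_0^\infty\phi(x)\,dx$ after the substitution $\mathbf{u}=A^{-1/2}\sqrt2\,\mathbf{v}$ and polar coordinates. This is exactly the term $\frac{2\pi}{q^2\sqrt D}G_{\mathcal{Q}}(q,a)\widetilde\phi(1)$.

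For $\mathbf{m}\neq0$, the same substitution and polar integration turn the radial Fourier transform into a $J_0$-Bessel transform in $\mathcal{Q}^{*}(\mathbf{m})$, the dual form attached to $A^{-1}$. After rescaling, this yields $F(n/q^2)$ with $n$ a value of the integral dual form $\mathcal{Q}^*$.

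The main obstacle is the arithmetic of the dual frequency, namely showing that the $\mathbf{m}$-lattice dual to $A$ reorganizes into the variables $\delta_0,\delta_1,D_0,q_1$. I would complete the square in the Gauss sum $\sum_{\mathbf{y}}e_q(a\mathcal{Q}(\mathbf{y})+\mathbf{m}\cdot\mathbf{y})$. On the part of $q$ coprime to $D$, and on the $\delta_0$ part, where $\mathrm{ord}_pD\le\mathrm{ord}_pq$, the square can be completed. This produces the phase $e_q(-\overline{aD_0}n)$ times a normalized Gauss sum. On the $\delta_1$ part, $A$ is not invertible modulo $q$, so the square cannot be completed. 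There the residual sum $\sum_{\mathbf{y}}e_q(a\mathcal{Q}(\mathbf{y}))e_{\delta_1}(\mathbf{y}\cdot gVB\mathbf{x})$ is kept intact. This is precisely why it is absorbed into $\widetilde r_{\mathcal{Q}^*}(n)$ via CRT with $gq_1\equiv1\bmod\delta_1$, the matrices $V,B$ arising from a Smith-type normal form of $A$.

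Since this bookkeeping is carried out in general dimension in \cite[Theorem 1.4]{HL-exp-sums}, I would simply set the dimension to $2$ there. It remains to check that the Bessel index becomes $l/2-1=0$ and the normalizing factor $(2\pi)/(q^{l/2}\cdots)$ reduces to $2\pi/(q^2\sqrt D)$.
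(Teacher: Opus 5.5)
Your proposal matches the paper, which gives no independent argument and simply records this lemma as the binary ($l=2$) specialization of \cite[Theorem 1.4]{HL-exp-sums}; your Poisson-summation sketch of the $\mathbf{m}=0$ term and of the $J_0$ kernel correctly explains where the terms come from. One small point in your final check: in dimension $l$, Poisson summation gives the prefactor $(2\pi)^{l/2}/(q^{l}\Gamma(l/2)\sqrt D)$ in front of $G_{\mathcal{Q}}(q,a)\widetilde\phi(l/2)$, so the power is $q^{-l}$ rather than $q^{-l/2}$ (unless the Gauss sum is normalized), and at $l=2$ this is exactly the $2\pi/(q^2\sqrt D)$ you obtained directly.
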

From the definition of $\widetilde{r}_{\mathcal{Q}^{*}}(n)$ and the standard square-root bound for the inner complete quadratic sum, we shall use
\begin{align*}
\widetilde{r}_{\mathcal{Q}^{*}}(n)\ll_{\mathcal{Q},\ee}q^{1+\ee}r(n,\mathcal{Q}^{*})\ll_{\mathcal{Q},\ee}q^{1+\ee}n^\ee.
\end{align*}
On the factorable Jutila moduli we shall impose $(q,2D)=1$. In this case the preceding formula takes the simpler form
\begin{align}\label{Voronoi-r(n,Q)-symmetric}
\sum_{n=1}^{\infty}r(n,\mathcal{Q})e_q(an)\phi(n)={}&\frac{2\pi G_{\mathcal{Q}}(q,a)}{q^2\sqrt{D}}\Big(\widetilde{\phi}(1)+\sum_{n=1}^{\infty}r(n,\mathcal{Q}^{*})e_q(-\overline{aD}n)F\Big(\f{n}{q^2}\Big)\Big).
\end{align}
By virtue of the flexibility in choosing moduli afforded by Jutila’s circle method, we prefer to adopt the Voronoi type summation formula \eqref{Voronoi-r(n,Q)-symmetric} in our analysis of the contribution from $S_1$.
For odd $q$ coprime to $D$, the quadratic Gauss factor is
\begin{align}\label{eq:quadratic-gauss-evaluation}
G_\mathcal{Q}(q,a)=q\ee_q^2\Big(\frac Dq\Big)=q\Big(\frac{-D}{q}\Big),\qquad (a,q)=1,
\end{align}
where $\ee_q=1$ for $q\equiv1\bmod4$ and $\ee_q=i$ for $q\equiv3\bmod4$. The next observation is elementary, but it is the structural reason why the refined divisor-case argument can be transferred to the binary quadratic-form problem.

\begin{lemma}\label{lem:binary-gauss-reduction}
Assume $(q,2D)=1$. Then there is a number $\kappa_\mathcal{Q}(q)$, depending only on $q$ and the fixed form, such that $|\kappa_\mathcal{Q}(q)|=1$ and	$G_\mathcal{Q}(q,a)=\kappa_\mathcal{Q}(q)q$	for every reduced residue class $a\bmod q$. In particular, we have $q^{-2}G_\mathcal{Q}(q,a)=q^{-1}{\kappa_\mathcal{Q}(q)}$.
\end{lemma}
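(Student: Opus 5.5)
The plan is to reduce to the evaluation \eqref{eq:quadratic-gauss-evaluation}, or to prove it directly when the reader prefers not to take it on trust. First I diagonalize the form modulo $q$. Since $q$ is odd and $(q,D)=1$, write $\mathcal{Q}(\mathbf{x})=ax_1^2+bx_1x_2+cx_2^2$ with $D=4ac-b^2$. If $(a,q)=1$, complete the square: $4a\mathcal{Q}(\mathbf{x})=(2ax_1+bx_2)^2+Dx_2^2$. Because $2a$ is invertible mod $q$, the map $(x_1,x_2)\mapsto(2ax_1+bx_2,x_2)$ permutes $(\Z/q\Z)^2$, and $\mathcal{Q}$ is equivalent mod $q$ to $\overline{4a}(y_1^2+Dy_2^2)$. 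If instead $q$ shares primes with $a$, I first apply a unimodular change of variables, replacing $a$ by $\mathcal{Q}(1,m)$ for a suitable $m$, to make the leading coefficient coprime to $q$. This is possible locally at each prime $p\mid q$, since $\mathcal{Q}\not\equiv0\bmod p$ as $p\nmid D$, and then globally by CRT. An invertible linear change mod $q$ preserves the sum $G_\mathcal{Q}(q,a)$.

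Next the Gauss sum factors. With $\alpha=a\overline{4a_0}$, where $a_0$ is the leading coefficient, we get $G_\mathcal{Q}(q,a)=g(\alpha;q)\,g(\alpha D;q)$, where $g(m;q)=\sum_{y\bmod q}e_q(my^2)$. For odd $q$ and $(m,q)=1$ the classical evaluation is $g(m;q)=\big(\frac mq\big)\ee_q\sqrt q$, with the Jacobi symbol; for general odd $q$ one reduces to prime powers via multiplicativity and twisted multiplicativity in $q$. The product is then $\big(\frac{\alpha}{q}\big)^2\big(\frac Dq\big)\ee_q^2q=\ee_q^2\big(\frac Dq\big)q$, independent of $a$, because $\big(\frac{\alpha}{q}\big)^2=1$ when $(\alpha,q)=1$. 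This recovers \eqref{eq:quadratic-gauss-evaluation}. Using $\ee_q^2=\big(\frac{-1}q\big)$, we can set $\kappa_\mathcal{Q}(q)=\big(\frac{-D}{q}\big)$, which has modulus $1$ since $(q,D)=1$, and the final identity follows by dividing by $q^2$.

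The only delicate point is the independence from $a$ for composite $q$, where the Gauss sum evaluation is not literally multiplicative. The cleanest route is to avoid composite evaluations entirely. Note that $G_\mathcal{Q}(q,a)$ is twisted-multiplicative in $q$: for $q=q'q''$ coprime, $G_\mathcal{Q}(q,a)=G_\mathcal{Q}(q',a\overline{q''})G_\mathcal{Q}(q'',a\overline{q'})$. It therefore suffices to prove $G_\mathcal{Q}(p^k,a)=p^k\big(\frac{-D}{p}\big)^k$ for every $a$ prime to $p$. That fact is a direct check: for $k=1$ use the diagonalization above, and for $k\ge2$ reduce $p^k\to p^{k-2}$ by the standard step $y\mapsto y+p^{k-1}z$.
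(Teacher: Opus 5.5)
Your proposal is correct and follows essentially the paper's argument: diagonalize the form (globally mod $q$ after your unimodular change, or prime power by prime power), multiply the two one-dimensional Gauss sums so that the quadratic characters of $a$ cancel and $\ee_q^2\big(\frac{D}{q}\big)=\big(\frac{-D}{q}\big)$ remains, and assemble the local factors via Chinese remaindering. Your extra details (making the leading coefficient a unit, and the recursion $G_\mathcal{Q}(p^k,a)=p^2G_\mathcal{Q}(p^{k-2},a)$) fill in steps the paper leaves implicit; note that the Jacobi-symbol evaluation of $g(m;q)$ holds for every odd $q$, so your first route is already complete without the twisted-multiplicativity detour.
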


\begin{proof}
Diagonalize the nondegenerate binary form over each odd prime-power factor of $q$ and apply the one-dimensional quadratic Gauss sum evaluation. The product of the two quadratic characters of $a$ is $1$, whereas the determinant character remains. Chinese remaindering gives \eqref{eq:quadratic-gauss-evaluation}, and the assertion follows with $\kappa_\mathcal{Q}(q)=(-D/q)$.
\end{proof}

For the unrestricted Kloosterman-circle moduli we shall use the following complete-sum estimate. Let $c_0$ be the largest divisor of $c$ supported on the primes dividing $2D$, and write $c/c_0=c_2c_3$,
where $c_2$ is squarefull, $c_3$ is squarefree, and $(c_2,c_3)=1$. For $m_1\mid c$, up to harmless choices of signs, we define
\begin{align*}	
C_\mathcal{Q}(m_1,m_2;h,u;c)=\sideset{}{^{*}}\sum_{b\bmod c}e_c(-hb+\bar bu)G_\mathcal{Q}(c,b)S(-\bar b,\pm m_2;c/m_1).
\end{align*}

\begin{lemma}\label{lem:quadratic-complete-sum}
With the notation above, we have
\begin{align*}
C_\mathcal{Q}(m_1,m_2;h,u;c)\ll_{\mathcal{Q},\ee}\f{(c_0c_2)^{5/2+\ee}c_3^{2+\ee}(c_3,h)^{1/2}}{\sqrt{m_1}}.
\end{align*}
\end{lemma}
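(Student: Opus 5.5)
The plan is to factor $C_\mathcal{Q}$ multiplicatively along $c=c_0c_2c_3$ and bound each local piece separately. Write $c=c_0c_2c_3$ with pairwise coprime factors, and correspondingly $m_1=m_1^{(0)}m_1^{(2)}m_1^{(3)}$. By the Chinese remainder theorem, write $b=b_0\bar{c}_0'+\cdots$ in the usual way. Three ingredients are multiplicative up to twisting the arguments by units: the additive character $e_c(-hb+\bar bu)$, the Gauss sum $G_\mathcal{Q}(c,b)$, and the Kloosterman sum $S(-\bar b,\pm m_2;c/m_1)$. Indeed $G_\mathcal{Q}(c,b)=\prod G_\mathcal{Q}(c_i,b\bar c_i^{\,\prime})$, where $c_i'=c/c_i$, and the same twisted multiplicativity holds for Kloosterman sums. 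Thus $C_\mathcal{Q}$ factors as a product of sums of the same shape modulo $c_0$, $c_2$, $c_3$, with $h,u,m_2$ replaced by unit multiples. It therefore suffices to prove the three local bounds
\[
\ll c_0^{5/2+\ee}(m_1^{(0)})^{-1/2},\qquad \ll c_2^{5/2+\ee}(m_1^{(2)})^{-1/2},\qquad \ll c_3^{2+\ee}(c_3,h)^{1/2}(m_1^{(3)})^{-1/2}.
\]

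For the $c_0$ and $c_2$ parts I would argue trivially. The Gauss sum satisfies $|G_\mathcal{Q}(c_i,b)|\ll_{\mathcal{Q}} c_i$, by the square-root bound for complete quadratic sums in two variables; the factor depending on $(D,c_i)$ is bounded since $D$ is fixed. Weil's bound, or just the trivial bound $\varphi$, gives $|S(\cdot,\cdot;c_i/m_1^{(i)})|\le c_i/m_1^{(i)}$. Summing over $b$ then gives at most $c_i\cdot c_i\cdot c_i/m_1^{(i)}\le c_i^{3}/m_1^{(i)}$. This is stronger than needed when $m_1^{(i)}\ge c_i$ and weaker otherwise. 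So instead I would use $|S|\ll (c_i/m_1^{(i)})^{1/2+\ee}\cdot(\text{gcd})^{1/2}$. For squarefull moduli the gcd loss is controlled crudely by $(c_i/m_1^{(i)})^{1/2}$, giving $c_i\cdot c_i\cdot(c_i/m_1^{(i)})^{1/2}=c_i^{5/2}(m_1^{(i)})^{-1/2}$, as required.

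For the squarefree part $c_3$, coprime to $2D$, Lemma \ref{lem:binary-gauss-reduction} gives $G_\mathcal{Q}(c_3,b)=\kappa_\mathcal{Q}(c_3)c_3$, independent of $b$. Hence the local sum equals $\kappa c_3\cdot S_{h',u',m_1^{(3)},m_2'}(c_3)$ in the notation \eqref{eq:basic-character-sum}, after the sign changes $b\mapsto -b$. We may then invoke Lemma \ref{lem:blomer-li-pointwise} with $c_{\mathrm{sq}}=1$, which gives $c_3^{1+\ee}\big((m_1,c_3,u,h)/m_1\big)^{1/2}\le c_3^{1+\ee}(c_3,h)^{1/2}m_1^{-1/2}$. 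Multiplying by the factor $c_3$ yields $c_3^{2+\ee}(c_3,h)^{1/2}(m_1^{(3)})^{-1/2}$.

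Multiplying the three local bounds gives the lemma. The main obstacle is the squarefull and $2D$-part: one must make sure that the crude Weil-type treatment really produces the factor $(m_1^{(i)})^{-1/2}$ rather than only $1$. If the gcd factors in the Kloosterman sum misbehave, I would fall back on the observation that the $b$-sum of $e(\bar b(u+m_1x))$ over $c_i$ is a Kloosterman sum of modulus $c_i$. This comes from opening $S$ as in Lemma \ref{lem:fourier-representation}, with the Gauss sum a unit-weighted character in $b$ at prime powers of odd $p\nmid D$. Bounding that inner sum by $c_i^{1/2+\ee}(\cdots)^{1/2}$ and summing $x$ over $c_i/m_1^{(i)}$ values gives $c_i\cdot(c_i/m_1^{(i)})\cdot c_i^{1/2}$. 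This is still of the required order $c_i^{5/2}(m_1^{(i)})^{-1/2}$, since $m_1^{(i)}\ge1$.
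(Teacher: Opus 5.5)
Your route is the paper's: Chinese remaindering, then the Gauss reduction $G_{\mathcal{Q}}(c_3,b)=\kappa_{\mathcal{Q}}(c_3)c_3$ plus Lemma~\ref{lem:blomer-li-pointwise} on the squarefree part, and a trivial $b$-sum with $|G_{\mathcal{Q}}(v,b)|\ll_{\mathcal{Q}}v$ and Weil on $c_0c_2$. The paper works prime by prime on $c_3$ and applies Weil directly to $S(-h,u;p)$ at $p\mid m_1$, which gives the same bound as your single application of Lemma~\ref{lem:blomer-li-pointwise} to all of $c_3$.

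The step you must repair is the Weil step on $c_0c_2$. You flag it as the main obstacle, but your justification is wrong, and in fact there is no obstacle. The first entry $-\bar b$ of $S(-\bar b,\pm m_2;c_i/m_1^{(i)})$ is a unit modulo $c_i/m_1^{(i)}$. So the gcd in Weil's bound is exactly $1$, and $|S|\le\tau(c_i)(c_i/m_1^{(i)})^{1/2}$. This is the paper's bound $\tau(v)v^{1/2}a_v^{-1/2}$, and after the trivial $b$-sum it gives $c_i^{5/2+\ee}(m_1^{(i)})^{-1/2}$.

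Your sentence that the gcd loss is ``controlled crudely by $(c_i/m_1^{(i)})^{1/2}$'', taken literally, only returns the trivial bound $c_i^3/m_1^{(i)}$. That is too weak whenever $m_1^{(i)}<c_i$. Your displayed product $c_i\cdot c_i\cdot(c_i/m_1^{(i)})^{1/2}$ is correct only because the gcd is $1$.

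The fallback should be dropped, for two reasons:
\begin{itemize}
\item It does not cover $c_0$, where $G_{\mathcal{Q}}(c_0,b)$ in general depends on $b$, so the $b$-sum is not a Kloosterman sum.
\item On $c_2$, Weil's bound for the resulting sum $S(-h,u+m_1^{(i)}x;c_i)$ (up to units) carries the factor $(h,u+m_1^{(i)}x,c_i)^{1/2}$, which you discarded. When $c_i\mid h$ and $m_1^{(i)}\mid u$ this factor is at least $(m_1^{(i)})^{1/2}$ for every $x$, so that argument does not give your claimed $c_i^{5/2}/m_1^{(i)}$.
\end{itemize}
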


\begin{proof}
Chinese remaindering factors the complete sum into local factors, up to multiplication of the local parameters by units.	We first consider a prime $p\mid c_3$. Since $p\nmid2D$ and $p$ occurs to the first power in $c$, Lemma \ref{lem:binary-gauss-reduction} gives
 $G_{\mathcal{Q}}(p,b)=\kappa_{\mathcal{Q}}(p)p$ with $(b,p)=1$.
 
If $p\nmid m_1$, Lemma \ref{lem:blomer-li-pointwise} bounds the remaining complete character sum by $\ll p$. Thus the local factor is
$\ll p^2\le p^2(p,h)^{1/2}.$ If $p\mid m_1$, then the local Kloosterman modulus is equal to $1$. The remaining reduced residue-class sum is, up to unit changes of the parameters,
\begin{align*}
\sideset{}{^{*}}\sum_{b\bmod p}e_p(-hb+\bar bu)=S(-h,u;p).
\end{align*}
By the Weil bound,  we derive $|S(-h,u;p)|\ll p^{1/2}(p,h,u)^{1/2}\le p^{1/2}(p,h)^{1/2}$. Restoring the Gauss factor gives $\ll p^{3/2}(p,h)^{1/2}={p^2(p,h)^{1/2}}{{(m_1,p)}^{-1/2}}.$
Hence, uniformly for every $p\mid c_3$, the local contribution is bounded by ${p^2(p,h)^{1/2}}{{(m_1,p)}^{-1/2}}$.

We next consider an exact prime-power factor $v=p^\nu\Vert c_0c_2$ and put $a_v=(m_1,v)$. Since $\mathcal{Q}$ is fixed, the quadratic Gauss sum satisfies $|G_{\mathcal{Q}}(v,b)|\ll_{\mathcal{Q}} v$ with $(b,v)=1$. Moreover, the Weil bound gives	$|S(\bar b,m_2;v/a_v)|\ll \tau(v){v^{1/2}}{a_v}^{-1/2}$. Summing trivially over the reduced residue classes $b\bmod v$, we obtain $|C_{\mathcal{Q},v}|\ll v^{5/2+\ee}a_v^{-1/2}$.

Multiplying the local estimates and using
\begin{align*}
\prod_{p^\nu\Vert c_0c_2}(m_1,p^\nu)\prod_{p\mid c_3}(m_1,p)=m_1,
\end{align*}
we then conclude the desired bound. This completes the proof of the lemma.
\end{proof}

The nonzero Poisson frequencies will again be retained as a single family of matrix rows. The only new feature is the presence of the normalized quadratic Gauss multiplier.

\begin{lemma}\label{lem:weighted-operator-bound}
For $m_1\mid c$, put $q=c/m_1$ and $w_c(b)={G_{\mathcal{Q}}(c,b)}/{c}$,	and define
\begin{align*}
S^{\mathcal{Q}}_{h,d,m_1,m_2}(c)=\sideset{}{^{*}}\sum_{b\bmod c}w_c(b)e_c(hb+d\bar b)S(\bar b,m_2;q).
\end{align*}
Then, for arbitrary complex coefficients $\gamma_d$, we have
\begin{align*}
\sum_{m_2\bmod q}\Big|\sum_{d\bmod c}\gamma_d S^{\mathcal{Q}}_{h,d,m_1,m_2}(c)\Big|^2\ll_{\mathcal{Q}}\f{c^3}{m_1}\sum_{d\bmod c}|\gamma_d|^2.
\end{align*}
The same conclusion holds when $w_c(b)$ is replaced by $w_c(-b)$ or by $\overline{w_c(b)}$.
\end{lemma}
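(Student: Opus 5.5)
The plan is to repeat the operator factorization from the proof of Lemma \ref{lem:uniform-operator-bound}, inserting one extra diagonal multiplier on $(\Z/c\Z)^\times$, namely multiplication by $w_c(b)$, and to show that this multiplier has operator norm $O_{\mathcal Q}(1)$. The structure is then unchanged. Opening $S(\bar b,m_2;q)=\sideset{}{^{*}}\sum_{x\bmod q}e_q(\bar bx+m_2\bar x)$ and interchanging sums as in Lemma \ref{lem:fourier-representation}, we get
\begin{align*}
\sum_{m_2\bmod q}f(m_2)S^{\mathcal Q}_{h,d,m_1,m_2}(c)
=\big(F_cJ_cM_hW_cJ_cL_{q,c}F_qJ_qR_qF_qf\big)(d),
\end{align*}
where $W_c$ denotes multiplication by $w_c(b)$ on $(\Z/c\Z)^\times$. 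It must be placed correctly relative to the inversions $J_c$: the variable of $w_c$ is the same $b$ that carries $e_c(hb)$, so it sits next to $M_h$. Since diagonal multipliers commute, its exact position among $M_h$ and the inversions needs only to be checked once, by comparing with the displayed computation in Lemma \ref{lem:uniform-operator-bound}.

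The key new step is the pointwise bound $\sup_{(b,c)=1}|w_c(b)|\ll_{\mathcal Q}1$, which gives $\|W_c\|_{2\to2}\ll_{\mathcal Q}1$. I would prove this by the Chinese remainder theorem. The Gauss sum factors multiplicatively, $G_{\mathcal Q}(c_1c_2,b)=G_{\mathcal Q}(c_1,bc_2)\,G_{\mathcal Q}(c_2,bc_1)$ for coprime $c_1,c_2$, after a unit rescaling of $b$ in each factor. For the part of $c$ coprime to $2D$, Lemma \ref{lem:binary-gauss-reduction} gives modulus exactly equal to the modulus. For a prime power $v=p^\nu$ with $p\mid 2D$, the standard evaluation of binary Gauss sums (diagonalize over $\Z_p$, or use the Jordan splitting at $p=2$) gives $|G_{\mathcal Q}(v,b)|\le p^{\mathrm{ord}_p(2D)}v\ll_{\mathcal Q}v$. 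This is the same bound already invoked in the proof of Lemma \ref{lem:quadratic-complete-sum}. Only finitely many primes are involved, so the product of these constants is $O_{\mathcal Q}(1)$ uniformly in $c$.

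Given this, the operator norm of the kernel $S^{\mathcal Q}_{h,d,m_1,m_2}(c)$ is at most $O_{\mathcal Q}(1)\cdot\sqrt c\,(\varphi(c)/\varphi(q))^{1/2}\,q\le O_{\mathcal Q}(1)\,q\sqrt{m_1c}$, exactly as before. The adjoint argument at the end of Lemma \ref{lem:uniform-operator-bound} then yields the stated dual inequality with constant $O_{\mathcal Q}(c^3/m_1)$.

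For the variants, replacing $w_c(b)$ by $w_c(-b)$ or by $\overline{w_c(b)}$ preserves the sup bound, since both have the same modulus pointwise. The factorization is unaffected because only a diagonal multiplier changes. The main obstacle is the uniform Gauss sum bound at primes dividing $2D$, especially at $p=2$ with even diagonal entries. If a clean citation is unavailable, I would bound $|G_{\mathcal Q}(p^\nu,b)|^2$ directly: expand it as $\sum_{\mathbf x,\mathbf y}e_{p^\nu}(b(\mathcal Q(\mathbf x)-\mathcal Q(\mathbf y)))$, substitute $\mathbf x=\mathbf y+\mathbf z$, and sum over $\mathbf y$ to get $p^{2\nu}\#\{\mathbf z\bmod p^\nu: A\mathbf z\equiv0\bmod p^\nu\}\le p^{2\nu}(\det A)^2$. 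This gives $|G_{\mathcal Q}(p^\nu,b)|\le |D|\,p^\nu$ without any case analysis.
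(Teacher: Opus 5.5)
Your proposal is correct and is essentially the paper's argument. The paper writes the estimate as a direct chain of inequalities: orthogonality in $m_2$, Parseval modulo $q$, Cauchy--Schwarz on the fibres of $b\mapsto\bar b\bmod q$ (each of size at most $m_1$), the bound $\|w_c\|_\infty\ll_{\mathcal Q}1$, and Parseval modulo $c$. This is the adjoint form of your factorization $F_cJ_cM_hW_cJ_cL_{q,c}F_qJ_qR_qF_q$, with the same norms $\sqrt c\cdot\|w_c\|_\infty\cdot\sqrt{m_1}\cdot\sqrt q\cdot\sqrt q$ factor by factor.

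Your kernel-counting argument $|G_{\mathcal Q}(c,b)|^2\le c^2\,\#\{\mathbf z\bmod c: A\mathbf z\equiv 0\bmod c\}\le c^2D$ is a clean self-contained proof of the sup bound, which the paper simply cites as standard. It can be applied to the modulus $c$ directly, so no Chinese remaindering is needed.
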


\begin{proof}
For $(b,c)=1$, put
\begin{align*}
B(b)=w_c(b)e_c(hb)\sum_{d\bmod c}\gamma_de_c(d\bar b).
\end{align*}
Opening the Kloosterman sum and applying orthogonality in $m_2$, we obtain
\begin{align*}
\sum_{m_2\bmod q}\Big|\sum_d\gamma_d S^{\mathcal{Q}}_{h,d,m_1,m_2}(c)\Big|^2=q\sideset{}{^{*}}\sum_{x\bmod q}\Big|\sideset{}{^{*}}\sum_{b\bmod c}B(b)e_q(\bar bx)\Big|^2.
\end{align*}
Enlarging the $x$-sum to all residue classes, Parseval modulo $q$ gives
\begin{align*}
\le q^2\sum_{v\bmod q}\Big|\sideset{}{^{*}}\sum_{\substack{b\bmod c\\\bar b\equiv v \bmod q}}B(b)\Big|^2.
\end{align*}
Each fibre contains at most $m_1$ elements. Hence Cauchy's inequality gives
\begin{align*}
\leq q^2m_1\sideset{}{^{*}}\sum_{b\bmod c}|B(b)|^2.
\end{align*}
Using
\begin{align*}
|B(b)|^2\le\|w_c\|_\infty^2\Big|\sum_{d\bmod c}\gamma_de_c(d\bar b)\Big|^2
\end{align*}
and enlarging the reduced residue-class sum to all classes, Parseval modulo $c$ yields
\begin{align*}
q^2m_1\sideset{}{^{*}}\sum_{b\bmod c}|B(b)|^2\le q^2m_1c\,\|w_c\|_\infty^2\sum_{d\bmod c}|\gamma_d|^2.
\end{align*}
For reduced $b$, the standard quadratic Gauss-sum estimate gives $|G_{\mathcal{Q}}(c,b)|\ll_{\mathcal{Q}}c$, and hence $\|w_c\|_\infty\ll_{\mathcal{Q}}1$. Since $q=c/m_1$, we have	$q^2m_1c={c^3}/{m_1}$. Therefore, we obtain
\begin{align*}
\sum_{m_2\bmod q}\Big|\sum_d\gamma_dS^{\mathcal{Q}}_{h,d,m_1,m_2}(c)\Big|^2
\ll_{\mathcal{Q}}\f{c^3}{m_1}\sum_{d\bmod c}|\gamma_d|^2.
\end{align*}
Only the supremum norm of the multiplier has been used, so the variants with $w_c(-b)$ and $\overline{w_c(b)}$ follow in the same way.
\end{proof}

\section{Proof of Theorem \ref{T1}}

We now turn to the proof of Theorem \ref{T1}. The argument follows the proof of Theorem \ref{B-L}, and we record  the points where the binary quadratic-form setting requires a modification. There are three such points. First, on the unrestricted Farey moduli, the normalized quadratic Gauss sum $w_c(b)$ is retained inside the complete character sum. Lemma \ref{lem:weighted-operator-bound} therefore replaces Lemma \ref{lem:uniform-operator-bound} in the treatment of the packed nonzero Poisson frequencies. Second, the zero Poisson frequency contains finitely many additional local factors at the primes dividing $2D$; these factors will be kept explicitly and will cancel against the corresponding local density in the Jutila normalization. Third, on the factorable Jutila moduli, which are chosen coprime to $2D$, Lemma \ref{lem:binary-gauss-reduction} makes the quadratic Gauss factor independent of the reduced residue class. Consequently, after the quadratic-form Voronoi transformation, the same complete character sums as in Section \ref{sec-S1-BL} reappear, with the same effective dual length.

Let $\phi$ be the smooth weight occurring in Theorem \ref{T1}. Choose
\begin{align*}
G\in C_c^\infty((1/2,4)),\quad G(x)=1\quad\text{for }1\le x\le3,
\end{align*}
and put	$g(x)=G(x/X)$. 
Since $\phi$ is supported on $[X,2X]$ and $0<h\le X$, we have $g(n+h)=1$ throughout the support of the original shifted convolution sum. Hence additive orthogonality gives
\begin{align*}
S_h^{\mathcal{Q}}(X)=\sum_{m,n}\lambda_\pi(1,m)g(m)r(n,\mathcal{Q})\phi(n)\int_0^1e((n+h-m)\alpha)\,d\alpha.
\end{align*}
For the Jutila function, we take the factorable weight
\begin{align}\label{eq:quadratic-jutila-weight}
w_\mathcal{Q}(q)=\sum_{\substack{Q_1/2\le p\le Q_1\\ p\ \mathrm{prime},\ p\nmid 2Dh}} \sum_{\substack{pt=q\\(t,2D)=1}}\rho(t/Q_2),
\end{align}
where $Q=Q_1Q_2$. Because $D$ is fixed, deleting the primes dividing $2D$ and restricting $t$ to $(t,2D)=1$ changes only the local density. The same elementary counting as in $L$ gives
\begin{align*}	
L_\mathcal{Q}=\sum_q\varphi(q)w_\mathcal{Q}(q)=Q^{2+o(1)},\quad \|w_\mathcal{Q}\|_\infty\ll X^\ee.
\end{align*}
Thus Lemma \ref{Jutila circle method} applies with the same powers of $X$ as in the divisor case.

Let $\chi$ denote the corresponding Jutila function. Inserting $1=\chi+(1-\chi)$,
we write
\begin{align*}
S_h^{\mathcal{Q}}(X)=S_1^{\mathcal{Q}}+S_2^{\mathcal{Q}},
\end{align*}
where
\begin{align*}
S_1^{\mathcal{Q}}={}&\sum_{m,n}\lambda_\pi(1,m)g(m)r(n,\mathcal{Q})\phi(n)\int_0^1\chi(\alpha)e((n+h-m)\alpha)\,d\alpha,
\end{align*}
and
\begin{align*}
S_2^{\mathcal{Q}}={}&\sum_{m,n}\lambda_\pi(1,m)g(m)r(n,\mathcal{Q})\phi(n)\int_0^1(1-\chi(\alpha))e((n+h-m)\alpha)\,d\alpha.
\end{align*}

The treatment of $S_2^{\mathcal{Q}}$ follows Section \ref{sec-S2-BL}. The packed nonzero Poisson frequencies are handled by Lemma \ref{lem:weighted-operator-bound}, while Lemma \ref{lem:quadratic-complete-sum} is used for the remaining pointwise contribution. The only additional issue is the zero Poisson frequency, where the finitely many local factors at the primes dividing $2D$ must be retained until the arithmetic mean is compared with the normalization $L_{\mathcal{Q}}$. The treatment of $S_1^{\mathcal{Q}}$ follows Section \ref{sec-S1-BL}. Since every Jutila modulus is coprime to $2D$, Lemma \ref{lem:binary-gauss-reduction} reduces the quadratic Gauss factor to a scalar before Cauchy--Schwarz is applied. We therefore record below only the modifications required in the two parts of the argument.

\subsection{Estimation of $S_2$}\label{sec-S2}

We now estimate $S_2^{\mathcal{Q}}$. The argument follows Section \ref{sec-S2-BL}, and we record only the modifications arising from the quadratic-form Voronoi formula. Applying Lemma \ref{Kloosterman circle method}, we write
\begin{align}\label{S2-def}
\begin{aligned}
S_2^\mathcal{Q}=& \sum_n \sum_m \lambda_\pi(1,m) g(m) r(n,\mathcal{Q}) \phi(n)\\
&\times \sum_{c\le C}\; \sideset{}{^*}\sum_{{b\bmod c}}\int_{-\frac{1}{c(c+c')}}^{\frac{1}{c(c+c'')}}
\Big(1-\chi\Big(\frac{b}{c}+z\Big)\Big) e\Big((n+h-m)\Big(\frac{b}{c}+z\Big)\Big) dz.
\end{aligned}
\end{align}
Next, we choose parameters satisfying $C_0 < C$ with $\log C_0 \asymp \log C \asymp \log X$. To derive an upper bound for $S_2^{\mathcal Q}$, we decompose $S_2^{\mathcal Q}$ further. We split $S_2^\mathcal{Q}= S_{2,0}^\mathcal{Q} + S_{2,1}^\mathcal{Q}$,
where $S_{2,0}^\mathcal{Q}$ stands for the contribution from major arcs with $c\le C_0$, while $S_{2,1}^\mathcal{Q}$ corresponds to the minor arcs with $C_0< c\leq C$.

For the minor arc part, the same Cauchy--Schwarz argument as in Section \ref{sec-S2-BL}, together with Lemma \ref{Jutila circle method} and Rankin--Selberg theory, gives
\begin{align*}
S_{2,1}^\mathcal{Q}&\ll\Big(\frac{Q^2}{L_\mathcal{Q}^2\delta}\Big)^{\frac12} X^{\frac12+\ee}\max_{C_0<c\le C}\;\max_{\substack{b\bmod c\\(b,c)=1}}\;\max_{ |z|\le \frac{1}{cC}}\Big|\sum_n r(n,\mathcal{Q})\phi(n) e\Big(n\Big(\frac{b}{c}+z\Big)\Big)\Big|.
\end{align*}
By Lemma \ref{prop-vonoi-formula}, the summation over $n$ becomes
\begin{align*}
\sum_n r(n,\mathcal{Q})\phi(n)e(nz)e\Big(\frac{nb}{c}\Big)= &\frac{2\pi}{c^2\sqrt{D}}\Big(G_\mathcal{Q}(c,b)\widetilde{\phi_z}(1)+
\sum_{n=1}^\infty {\widetilde{r}_{^\mathcal{Q*}}(n)}e\Big(-\frac{\overline{bD_0}{n}}{c}\Big)F_z\Big(\frac{n}{c^2}\Big)\Big),
\end{align*}
where 
\begin{equation*}	
F_z\Big(\f{n}{c^2}\Big)=\int_0^\infty \phi(x)e(xz)J_{0}\Big({\frac{4\pi\sqrt{nx}}{c\sqrt{D_0}}}\Big)dx.
\end{equation*}  
As in \eqref{eq:divisor-minus-transform}, an application of the method of stationary phase yields the bound
\begin{align*}
F_z\Big(\f{n}{c^2}\Big)\ll \f{X^{1+\ee}}{1+|z|X}\Big(1+\f{nX}{c^2(1+|z|X)^2}\Big)^{-A}.
\end{align*}
 Integration by parts, we derive
\begin{equation*}	
\widetilde{\phi_z}(1)\ll \f{X}{1+|z|X}.
\end{equation*} 
Upon invoking the square-root cancellation for the Gauss sum $G_\mathcal{Q}(c,b)$ together with the individual upper bound of the arithmetic function $r(n,\mathcal{Q}^*)$ in \eqref{indi-r}, we deduce 
\begin{align}\label{S_21-bound}
\begin{aligned}
S_{2,1}^\mathcal{Q}\ll\frac{QX^{\frac12+\ee}}{L_\mathcal{Q}\delta^{\frac12}}\max_{C_0<c\le C}\max_{ |z|\le \frac{1}{cC}}\Big( \frac{X}{c}+c(1+|z|X) \Big)\ll\frac{X^{\frac12+\ee}}{Q\delta^{\frac12}}\Big( \frac{X}{C_0}+C\Big).
\end{aligned}
\end{align}

We now revert to \eqref{S2-def}, and proceed to examine the major arc contribution $S_{2,0}^\mathcal{Q}$ under the constraint $c\le C_0$. Suppose $c\le C_0$ and use \eqref{eq:refined-farey}. The nonzero quadratic-form dual sum has length
\begin{align*}
n\ll X^\ee\f{c^2(1+|z|X)^2}{X} \ll X^\ee\Big(\frac{C_0^2}{X}+\frac{X}{C^2}\Big).
\end{align*}
Under the hypotheses $C_0\le X^{1/2-\eta}, C\ge X^{1/2+\eta},$ this part is negligible. The zero term in the quadratic-form transformation is
\begin{align*}
\frac{2\pi }{c\sqrt D}\,w_c(b)\int_0^\infty\phi(x)e(zx)\,dx, \text{ with }  w_c(b)=\frac{G_\mathcal{Q}(c,b)}c.
\end{align*}
Repeated integration by parts restricts $|z|\ll X^{-1+\ee_1}$. Thus the same normalization as in \eqref{eq:s2-major-arc-master} applies, with the factor $w_c(b)$ retained inside the complete sum. Reflection $b\mapsto-b$ replaces $w_c(b)$ by $w_c(-b)$; Lemma \ref{lem:weighted-operator-bound} applies to either multiplier. The smooth quadratic-form zero term and the $\mathrm{GL}(3)$ Voronoi weight are independent of the variables subsequently introduced from $\chi$.

Denote the contribution of $1$ in $1-\chi$ by $S_{2,0,0}^\mathcal{Q}$ and retain it without estimating it separately. Below it will be combined with the zero Poisson frequency from $\chi$. These two multipliers act on the same normalized character sum and the same smooth transforms.

For the $\chi$-part, insert \eqref{chi-repr}, with the weight $w_\mathcal{Q}$ and normalizing constant $L_\mathcal{Q}$. 
The entire multiplier is $\psi(z/\delta)w_\mathcal{Q}(c)/(\delta L_\mathcal{Q})$. We also assume $4C_0<Q$, as holds for the parameters chosen below. Since $w_\mathcal{Q}$ is supported on $[Q/4,Q]$, this multiplier vanishes for $c\le C_0$. Therefore, we have
\begin{align}	\label{eq:quadratic-zero-circle-frequency}
S_{2,0,1}^{\mathcal{Q},l=0}=0.
\end{align}

Next, we treat the nonzero circle frequencies. Put $R=\rad(2D)=\textstyle\prod_{p\mid 2D} p$. We impose the support and injectivity conditions
\begin{align*}
C_0Q_2\delta=o(1),\quad C_0Q_1\delta=o(1),\quad C_0^2\delta X^{\eta_0}=o(1),\quad C_0Q_1^2\delta X^{\eta_0}=o(1),
\end{align*}
which are verified at the end of the proof. The second condition also ensures that the support of every nonzero circle frequency stays away from the lower endpoint of the positive $d$-sum. The branch decomposition  remains valid for $w_\mathcal{Q}$; the additional coprimality restrictions only remove terms. In the first two branches, the support condition is $0<|\ell|\ll cY\delta$, with $Y\ll Q_2/(g_1r)$, so $C_0Q_2\delta=o(1)$ eliminates both branches. In the third branch we have
\begin{align*}
c=g_1g_2,\quad p\asymp Q_1,\quad p\nmid chR,\quad (p,r)=1,\quad d\asymp Y\ll\frac{Q_2}{g_1r}.
\end{align*}
After dividing the original circle frequency by $g_1$, we get
\begin{align*} 
(l,g_2)=1,\quad d\equiv\bar p\bar bl\bmod{g_2},\quad 0<|l|\ll cYp\delta.
\end{align*}
Here $|z|/\delta=o(1)$, so the number of admissible nonzero $l$ is $O(cYQ_1\delta)$, without an additional constant term. The condition on the second factor in \eqref{eq:quadratic-jutila-weight} is exactly $(dg_1r,R)=1$. Therefore $(g_1r,R)=1$, and we expand the remaining restriction as
\begin{align*}
\1_{(d,R)=1}=\sum_{\substack{s\mid R\\s\mid d}}\mu(s).
\end{align*}
There are only $O_\mathcal{Q}(1)$ choices of $s$. If $(s,g_2)>1$, the congruence for $d$ has no solution. For $(s,g_2)=1$, we also have $(s,g_1)=1$ since $(g_1,R)=1$. Thus $(s,c)=1$, and the inverses below may be taken modulo $c$. Write $d=sv$. Then	$v\asymp Y_s:=Y/s$ with $v\equiv\overline{sp}\bar b l\bmod{g_2}$.
With $v_0$ a fixed dyadic cutoff, Poisson summation in $v$ gives the coefficient (as in \eqref{eq:poisson-weight})
\begin{align}	\label{eq:quadratic-poisson-coefficient}
\alpha_{p,l,\nu}^{(s)}=\frac1{g_2}\int_{\R}v_0(s\xi/Y)\rho(s\xi g_1r/Q_2)\psi\Big(\delta^{-1}\Big(\frac{l}{csp\xi}+z\Big)\Big)e\Big(-\frac{\nu\xi}{g_2}\Big)\,d\xi.
\end{align}
Every derivative of order $j$ of its smooth integrand is $O_{j,\mathcal{Q}}(Y_s^{-j})$. Hence, with $H_s=g_2/Y_s=sg_2/Y$, we obtain
\begin{align}	\label{eq:quadratic-poisson-energy}
|\alpha_{p,l,\nu}^{(s)}|\ll\frac{Y}{sg_2}\Big(1+\frac{|\nu|Y}{sg_2}\Big)^{-A}, \quad\sum_{\nu\ne0}|\alpha_{p,l,\nu}^{(s)}|^2\ll\frac{Y}{sg_2}.
\end{align}
The frequencies $|\nu|>H_sX^{\eta_0}$ are negligible after choosing $A$ sufficiently large. If this upper limit is less than $1$, there is no effective nonzero Poisson frequency.

Next, we turn to the treatment of the nonzero Poisson frequencies. Fix $s\mid R$ with $(s,g_2)=1$, and for $0<|k|\ll sg_2cQ_1\delta X^{\eta_0}$, set
\begin{align*}
\beta_{p,k}^{(s)}=\sum_{\substack{l\nu=k\\l\ne0,\ \nu\ne0}}\alpha_{p,l,\nu}^{(s)}.
\end{align*}
The exact Poisson phase contributed by the congruence is $e_{g_2}(\nu l\overline{sp}\bar b)$. After the same change $b\mapsto-b$ as in \eqref{eq:exact-poisson-phase}--\eqref{eq:packed-coefficients}, the row parameter of the weighted matrix in Lemma \ref{lem:weighted-operator-bound} is $d_0-g_1k\overline{sp}\pmod c$ with $d_0=-u.$
All parameters $p,l,\nu$ thus occur in this row and in its coefficient; the normalized $\mathrm{GL}(3)$ weight is common to the family.

For fixed $s$, this row map is injective. In fact, we write $K_s\ll_\mathcal{Q} cg_2Q_1\delta X^{\eta_0}$ for the upper bound on $|k|$. The two injectivity conditions give $K_s<Q_1/2$ and $2Q_1K_s<g_2$ for large $X$. Equality of two rows, followed by multiplication by $sP_1p_2$ modulo $g_2$, implies $k_1p_2-k_2p_1\equiv0\bmod{g_2}$.
Its absolute value is less than $g_2$, so it vanishes as an integer. If $p_1\ne p_2$, primality would imply $p_1\mid k_1$, contrary to $0<|k_1|<Q_1/2\le p_1$. Hence $p_1=p_2$ and $k_1=k_2$.

Cauchy--Schwarz in $l\nu=k$, the divisor bound, and \eqref{eq:quadratic-poisson-energy} give
\begin{align}	\label{eq:quadratic-packed-energy}
\sum_{p,k}|\beta_{p,k}^{(s)}|^2\ll X^\ee Q_1(cYQ_1\delta)\frac{Y}{sg_2}=X^\ee\frac{Q_1^2g_1Y^2\delta}{s}.
\end{align}
There is no linear summation over $p$ or $l$ before this quadratic estimate. Let $T_s(m_2)$ be the corresponding sum of weighted matrix rows. By injectivity and Lemma \ref{lem:weighted-operator-bound}, we derive
\begin{align*}
\sum_{m_2\bmod c/m_1}|T_s(m_2)|^2\ll \frac{c^3}{m_1}\sum_{p,k}|\beta_{p,k}^{(s)}|^2.
\end{align*}
An effective block $m_2\asymp N$ satisfies $N/(c/m_1)\ll X^\ee c^2/(m_1X)=o(1)$. Thus its interval contains at most one representative of each residue class modulo $c/m_1$. Let $U(m_2)$ denote the common normalized Voronoi cutoff. Then Cauchy–Schwarz gives
\begin{align}\label{eq:quadratic-packed-cauchy}
\sum_{m_2\asymp N}\frac{|\lambda_\pi(m_2,m_1)|}{\sqrt m_2}|U(m_2)||T_s(m_2)|\ll X^\ee c^{3/2}\|\beta^{(s)}\|_2.
\end{align}

The zero quadratic-form transform and the $\mathrm{GL}(3)$ transform, the $z$-integration, and the insertion of $\chi$ supply the common factor
\begin{align*}
\frac{X^{1/2+\ee}}{\delta Q^2c^{3/2}}.
\end{align*}
The Poisson factor $g_2^{-1}$ has already been included in \eqref{eq:quadratic-poisson-coefficient}. Consequently \eqref{eq:quadratic-packed-energy}--\eqref{eq:quadratic-packed-cauchy} bound a fixed block by
\begin{align*}
X^\ee\frac{X^{1/2}Q_1Y\sqrt{g_1}}{\delta^{1/2}Q^2\sqrt s}\ll X^\ee\frac{X}{Q\sqrt{sg_1}\,r},
\end{align*}
where $Y\ll Q_2/(g_1r)$ and $\delta^{-1/2}\le X^{1/2}$. Summing $m_1\mid c$, $ g_1\mid c$, $r\ll Q_2$, the dyadic blocks, and $u$ costs only $X^\ee$. Finally, summing $c\le C_0$ proves (as in \eqref{eq:nonzero-poisson-total})
\begin{align}\label{eq:quadratic-nonzero-poisson-total}
S_{2,0,1}^{\mathcal{Q},l\ne0,\nu\ne0}\ll X^\ee\frac{XC_0}{Q}.
\end{align}

We next turn to the zero Poisson frequency and the analysis of its mean.  Let $\mathcal P_R$ be the primes $p\in[Q_1/2,Q_1]$ with $p\nmid Rh$. For fixed $c$, we put
\begin{align*}
B=(c,R),\quad \vartheta(M)=\frac{\varphi(M)}M,\quad I_\rho=\int_0^\infty t\rho(t/Q_2)\,dt.
\end{align*}
We sum these coefficients with their M\"obius signs over $s,g_1,r,l$ and the dyadic partition before taking absolute values. Denote the resulting scalar multiplier of $\chi$ by $\chi_{\mathcal{Q},0}(c,z)$.

The restrictions $(g_1r,R)=1$ and $(s,g_2)=1$ imply that the surviving $s$ are precisely the divisors of $R/B$. Changing $s\xi$ to $d$ in the zero-frequency integral therefore gives the factor
\begin{align*}
\sum_{s\mid R/B}\frac{\mu(s)}s=\vartheta(R/B).
\end{align*}
Next, let $t=g_1rd$ and $j=g_1r l$. The measure and progression factor become $dt/(cr)$. For fixed $r\mid j$, the conditions
\begin{align*}
g_1\mid c,\quad g_1\mid j/r,\quad \Big(\frac{j}{g_1r},\frac{c}{g_1}\Big)=1
\end{align*}
uniquely determine $g_1=(j/r,c)$. Since $(r,R)=1$, the additional condition $(g_1,R)=1$ is equivalent to $(j,B)=1$. It follows that, with
\begin{align*}
a_{p,R,B}(j)=\1_{(j,B)=1}\sum_{\substack{r\mid j\\(r,pR)=1}}\frac{\mu(r)}r,\quad j\ge1,
\end{align*}
the exact scalar is (as in \eqref{eq:zero-poisson-exact})
\begin{align*}
\chi_{\mathcal{Q},0}(c,z)={}&\frac{\vartheta(R/B)}{\delta L_\mathcal{Q}c}\sum_{\substack{p\in\mathcal P_R\\p\nmid c}}\int_0^\infty\rho(t/Q_2)\sum_{j\ne0}a_{p,R,B}(|j|)\psi\Big(\delta^{-1}\Big(\frac{j}{cpt}+z\Big)\Big)\,dt.
\end{align*}
There is no endpoint error in this recombination. Indeed, the support gives $0<|j|\ll cQ\delta$ and $g_1r\le|j|$, whence
\begin{align*}
	d=\frac{t}{g_1r}\gg\frac1{cQ_1\delta}\longrightarrow\infty.
\end{align*}
The original positive-integer cutoff and the cutoffs after division by any fixed $s\mid R$ are consequently identically one on the relevant support. The exact dyadic partition may therefore be summed to one. This argument also shows that every divisor $r$ appearing above belongs to the original range of summation.

For $T\ge1$, opening the divisor sum and counting integers coprime to $B$ give
\begin{align*}
\sum_{1\le j\le T}a_{p,R,B}(j)&=\sum_{\substack{r\le T\\(r,pR)=1}}\frac{\mu(r)}r\sum_{\substack{m\le T/r\\(m,B)=1}}1=T\vartheta(B)\sum_{\substack{r\ge1\\(r,pR)=1}}\frac{\mu(r)}{r^2}+O_R(\log(2T))\\
&=\frac{T\vartheta(B)}{\zeta(2)\prod_{l\mid pR}(1-l^{-2})}+O_R(\log(2T)).
\end{align*}
Here the tail of the absolutely convergent $r^{-2}$ sum contributes $O(1)$, and the errors from counting $m$ sum to $O_R(\log(2T))$. Partial summation separately on the positive and negative integers now shows that, uniformly for $Z>0$ and $|v|\le1/4$,
\begin{align*}
\sum_{j\ne0}a_{p,R,B}(|j|)\psi(j/Z+v)=\frac{Z\vartheta(B)}{\zeta(2)\prod_{l\mid pR}(1-l^{-2})}+O(\log(2+Z)).
\end{align*}
For bounded $Z$, the same formula follows directly; the main integrals from the two half-lines combine to $Z\int\psi=Z$. Since $R$ is squarefree, $\vartheta(R/B)\vartheta(B)=\vartheta(R)$. Define
\begin{align*}
\kappa_R=\frac1{\zeta(2)}\prod_{l\mid R}(1+l^{-1})^{-1}.
\end{align*}
Applying the weighted mean with $Z=cpt\delta$ and $v=z/\delta$ yields (as in Lemma \ref{eq:zero-poisson-main-term})
\begin{align*}
\chi_{\mathcal{Q},0}(c,z)=\frac{I_\rho}{L_\mathcal{Q}}\sum_{\substack{p\in\mathcal P_R\\p\nmid c}}\frac{p\kappa_R}{1-p^{-2}}+O\Big(\frac{X^\ee}{cQ\delta}\Big).
\end{align*}
In particular, all local factors depending on $(c,R)$ have disappeared.

To compare this mean with the Jutila normalization, use the exact identity \eqref{phi(pt)}. Restricting $t$ to $(t,R)=1$ forces $(r,R)=1$, and the same counting and partial summation give (as in Lemma \ref{eq:jutila-normalization-precise}) 
\begin{align*}
L_\mathcal{Q}=I_\rho\sum_{p\in\mathcal P_R}\frac{(p-1)\kappa_R}{1-p^{-2}}+O\Big(Q_2\log(2Q_2)\sum_{p\in\mathcal P_R}p\Big).
\end{align*}
As in \eqref{eq:zero-poisson-cancellation-detailed}, we conclude that
\begin{align}	\label{eq:quadratic-zero-poisson-cancellation}
|1-\chi_{\mathcal{Q},0}(c,z)|\ll X^\ee\Big(\frac1{cQ\delta}+\frac1{Q_1}+\frac1{Q_2}\Big).
\end{align}
We now combine this scalar with the original multiplier $1$ in $1-\chi$. The normalized Gauss factor $w_c(b)$ and both smooth transforms are common to these two terms. Thus the pointwise estimate  applies with the scalar $1-\chi_{\mathcal{Q},0}(c,z)$, even when $(c,2D)>1$. Their combined contribution is
\begin{align}\label{eq:quadratic-zero-poisson-combined}
S_{2,0,0}^\mathcal{Q}-S_{2,0,1}^{\mathcal{Q},l\ne0,\nu=0}\ll X^\ee\biggl(\frac{C_0}{Q\delta}+\frac{C_0^2}{Q_1}+\frac{C_0^2}{Q_2}\biggr).
\end{align}
Here $S_{2,0,1}^{\mathcal{Q},l\ne0,\nu=0}$ denotes the contribution with the positive multiplier $\chi_{\mathcal{Q},0}$, so the minus sign is the one in $1-\chi$. Since $\delta^{-1}\le X$, equations \eqref{eq:quadratic-zero-circle-frequency}, \eqref{eq:quadratic-nonzero-poisson-total}, and \eqref{eq:quadratic-zero-poisson-combined} imply
\begin{align}\label{eq:quadratic-major-arc-bound}
S_{2,0}^\mathcal{Q}\ll X^\ee\biggl(\frac{XC_0}{Q}+\frac{C_0^2}{Q_1}+\frac{C_0^2}{Q_2}\biggr).
\end{align}
Combining this with \eqref{S_21-bound}, we obtain
\begin{align}\label{eq:quadratic-s2-final-bound}
S_2^\mathcal{Q}\ll X^\ee\Big(\frac{X^2}{QC_0}+\frac{XC}{Q}+\frac{XC_0}{Q}+\frac{C_0^2}{Q_1}+\frac{C_0^2}{Q_2}\Big).
\end{align}
The argument uses Lemma \ref{lem:quadratic-complete-sum} for the remaining scalar contribution and Lemma \ref{lem:weighted-operator-bound} for the nonzero Poisson frequencies. It imposes no coprimality condition between the Farey modulus $c$ and $2D$.

\subsection{Estimation of  $S_1$}\label{sec-S1}

We now estimate $S_1^\mathcal{Q}$. Here every modulus in \eqref{eq:quadratic-jutila-weight} is coprime to $2D$, so the exact reduction of Lemma \ref{lem:binary-gauss-reduction} is available. From Lemma \ref{Jutila circle method}, we obtain
\begin{align*}	
S_1^\mathcal{Q}={}&\frac1{\delta L_\mathcal{Q}}\sum_q w_\mathcal{Q}(q)\sideset{}{^{*}}\sum_{a\bmod q}\int_{\R}\psi\Big(\f z\delta\Big)e_q(ah)e(zh)\sum_n r(n,Q)\phi(n)e_q(an)e(zn)\\
&\times\sum_m\lambda_\pi(1,m)g(m)e_q(-am)e(-zm)\,dz.
\end{align*}
Apply \eqref{Voronoi-r(n,Q)-symmetric} to the $n$-sum and Lemma \ref{voronoi3} to the $m$-sum. By Lemma \ref{lem:binary-gauss-reduction}, the factor $q^{-2}G_Q(q,a)$ becomes $\kappa_Q(q)/q$ and is independent of $a$. Thus the sum over $a$ contains, with the factor $\kappa_Q(q)$ retained in the coefficient,
\begin{align}	\label{eq:quadratic-character-sum-reduction}
\sideset{}{^{*}}\sum_{a\bmod q}e_q(ha+\bar a d)S(-\bar a,\pm m_2;q/m_1)=S_{h,d,m_1,\mp m_2}(q),
\end{align}
where $d=0$ for the main term of \eqref{Voronoi-r(n,Q)-symmetric} and $d=-\overline{D}n$ for a nonzero quadratic-form dual variable $n$; here $D\overline{D}\equiv1\pmod q$.  The change of sign on the right of \eqref{eq:quadratic-character-sum-reduction} follows from $S(-\bar a,\pm m_2;q/m_1)=S(\bar a,\mp m_2;q/m_1)$. Below we denote the resulting sign by $\sigma\in\{1,-1\}$ and treat the two signs separately.

It is useful to combine these two cases. Put
\begin{align*}
A_\mathcal{Q}(0;q,z)=\widetilde\phi_z(1),\quad A_\mathcal{Q}(n;q,z)=r(n,\mathcal{Q^*})F_{\phi,z}(n/q^2),\quad n\ge1.
\end{align*}
By \eqref{indi-r}, for every $A>0$, we have
\begin{align}	\label{eq:quadratic-dual-weight-bound}
A_\mathcal{Q}(n;q,z)\ll X^{1+\ee}\Big(1+\frac{nX}{q^2}\Big)^{-A},
\end{align}
with the same bound for $n=0$. Hence the effective range is	$0\le n\ll X^{-1+\ee}{Q^2}$.
This is exactly the length of the divisor-side dual variable in Section \ref{sec-S1-BL}.

Place the $\mathrm{GL}(3)$ divisor $m_1\asymp H$ and the remaining dual variable $m_2\asymp N$. The $\mathrm{GL}(3)$ transform gives
\begin{align}\label{eq:quadratic-gl3-dual-range}
H^2N\ll X^{-1+\ee}{Q^3}.
\end{align}
After removing the common factors from the two transformations, a dyadic block is bounded by the analogue of \eqref{eq:s1-dyadic-block}, namely
\begin{align}	\label{eq:quadratic-s1-dyadic-block}
\frac{X^{3/2+\ee}}{\delta L_\mathcal{Q}Q^{3/2}}\int_{|z|\ll\delta}\sum_{q=pt\asymp Q}\sum_{\substack{m_1\asymp H\\m_1\mid q}}\sum_{m_2\asymp N}\frac{|\lambda_\pi(m_2,m_1)|}{\sqrt{m_2}}\Big|\sum_{n\ge0}S_{h,-\overline{D}n,m_1,\sigma m_2}(q)\Omega_{m_1,m_2}^\mathcal{Q}(n,q)\Big|\,dz,
\end{align}
where, after normalizing by the factor $X$ in \eqref{eq:quadratic-dual-weight-bound},
\begin{align*}
\Omega_{m_1,m_2}^\mathcal{Q}(n,q)\ll X^\ee\Big(1+\frac{nX}{q^2}\Big)^{-A}\Big(1+\frac{m_1^2m_2X}{q^3}\Big)^{-A}.
\end{align*}
The $X$-power and every modulus power in \eqref{eq:quadratic-s1-dyadic-block} agree with the divisor block \eqref{eq:s1-dyadic-block}; this equality of normalizations is the main benefit of the binary Gauss reduction.

We keep the arithmetic coefficient distinct from the smooth transform. Specifically, set $b_\mathcal{Q}(0)=1$ and $b_\mathcal{Q}(n)=r(n,\mathcal{Q^*})$ for $n\ge1$, and write
\begin{align*}
\Omega_{m_1,m_2}^\mathcal{Q}(n,q)=\kappa_\mathcal{Q}(q)b_\mathcal{Q}(n)\Xi_{m_1,m_2}(n,q).
\end{align*}
The scalar $\kappa_\mathcal{Q}(q)$ has modulus one, and $b_\mathcal{Q}(n)\ll (1+n)^\ee$. For fixed $n,q,z,m_1$, the factor $\Xi$ is smooth in $m_2$. On a block $m_2\asymp N$, the differentiated Voronoi bounds give
\begin{align*}
N^j\Big|\frac{\partial^j}{\partial m_2^j}\Xi_{m_1,m_2}(n,q)\Big|\ll X^\ee\Big(1+\frac{nX}{q^2}\Big)^{-A}\Big(1+\frac{m_1^2NX}{q^3}\Big)^{-A}.
\end{align*}
As usual, the auxiliary exponent in $\delta=X^{-1+\ee_0}$ is chosen sufficiently small for the prescribed $\ee$ and the fixed number of integrations by parts. These estimates apply also to $n=0$. No derivative is taken of $b_\mathcal{Q}(n)$ or $\kappa_\mathcal{Q}(q)$.

When $H\ge Q_1/10$, apply Lemma \ref{lem:blomer-li-pointwise} to the complete sum in \eqref{eq:quadratic-s1-dyadic-block}. The calculation is identical in powers to the large-$H$ divisor calculation and gives
\begin{align*}	
S_1^\mathcal{Q}(N,H)\ll X^\ee\Big(\frac XH+\frac{Q^2}{H}\Big)\ll X^\ee\Big(\frac X{Q_1}+\frac{Q^2}{Q_1}\Big).
\end{align*}

Assume $H\le Q_1/10$. Since $q=pt$, $p\asymp Q_1$, and $m_1\mid q$, we have $p\nmid m_1$ and therefore $m_1\mid t$. Moreover $p\nmid hD$ by construction, so $(hm_1,p)=1$. We retain the prime sum inside the absolute value, exactly as in the divisor argument. Cauchy--Schwarz in $m_2$ followed by Rankin--Selberg opens a square containing two primes $p_1,p_2$ and two quadratic-form dual variables $n_1,n_2$.
Poisson summation in $m_2$ modulo $[p_1,p_2]t$ gives a dual frequency $k$ with
\begin{align}\label{eq:quadratic-s1-poisson-frequency}
|k|\ll X^\ee{[p_1,p_2]t}/{N}.
\end{align}
Indeed, after fixing $p_i,n_i,t,m_1,z$, the factors $\kappa_\mathcal{Q}(p_1t)\kappa_\mathcal{Q}(p_2t)b_\mathcal{Q}(n_1)b_\mathcal{Q}(n_2)$ are constant in the Poisson variable. The derivative bounds for the two $\Xi$ factors and the dyadic cutoff give \eqref{eq:quadratic-s1-poisson-frequency} by repeated integration by parts. The arithmetic coefficients contribute only $X^\ee$.

The complete correlation is Lemma \ref{lem:blomer-li-correlation} with row variables $d_i=-\overline{D}n_i$, where $\overline{D}$ is one integer satisfying $D\overline{D}\equiv1\pmod{[p_1,p_2]t}$.  Thus the two row parameters use a common integer representative. The common sign $\sigma$ in the column can be removed by changing the Poisson variable, which replaces $k$ by $\sigma k$ and leaves all estimates unchanged. Because $p_i\nmid D$, $(d_1-d_2,p_i)=(\overline{D}(n_1-n_2),p_i)=(n_1-n_2,p_i)$, and all local coprimality hypotheses are unchanged.

The frequency $k=0$ is now separated before any average over $k$. When $p_1=p_2$, Lemma \ref{lem:blomer-li-correlation} gives the exact zero-frequency structure $T\ll p_1^2t^{3+\ee}(n_1-n_2,p_1).$  
The subcase $n_1=n_2$ is the full diagonal and is kept by itself. Before the harmless inequality $H\ge1$ is used, its contribution to the quadratic bracket is ${Q^4}/{H^2}$. For $n_1\ne n_2$, we use
\begin{align*}
\sum_{n_1,n_2\ll \f{X^{\ee}{Q^2}}{X}}(n_1-n_2,p)\ll X^{-2+\ee}{Q^4}+X^{-1+\ee}{Q^2}p,
\end{align*}
with the diagonal omitted from the left-hand side when applying the first term. For $k\ne0$, Lemma \ref{lem:blomer-li-correlation} gives the two correlation estimates; the $k$-sum is bounded using \eqref{eq:quadratic-s1-poisson-frequency}.  The three resulting contributions are
\begin{align*}
{Q^4}/{H^2},\quad {Q^5}Q_2/X,\quad Q_1^5Q_2^{7/2},
\end{align*}
corresponding respectively to the complete zero-frequency diagonal, the remaining zero-frequency terms with $p_1=p_2$, and the nonzero-frequency correlations, including the generic $p_1\ne p_2$ terms. Substitution into \eqref{eq:quadratic-s1-dyadic-block}, followed by $H\ge1$, gives
\begin{align*}	
S_1^\mathcal{Q}(N,H)\ll\frac{X^{1/2+\ee}}{Q_1^{3/2}Q_2}\Big(Q^4+\frac{Q^3}{X}Q_1^2Q_2^3+Q_1^5Q_2^{7/2}\Big)^{1/2}.
\end{align*}
In particular, the zero Poisson frequency has not been replaced by the average appropriate to $k\ne0$.
Combining the large- and small-$H$ ranges, and assuming $Q\ge X^{1/2}$, we obtain
\begin{align}	\label{eq:quadratic-s1-final-bound}
S_1^\mathcal{Q}\ll X^\ee\Big(\frac{X^{1/2}Q}{Q_1^{1/2}}+\frac{Q^2}{Q_1}+\frac{X^{1/2}Q}{Q_2^{1/4}}\Big).
\end{align}
On the Jutila moduli coprime to $2D$, Lemma \ref{lem:binary-gauss-reduction} makes the binary Gauss factor a scalar before Cauchy--Schwarz is applied. The resulting correlation estimates therefore have exactly the powers appearing in the divisor case. In Section \ref{sec-S2}, the weighted operator handles the unrestricted Farey moduli, while the explicit zero-frequency mean cancels the original multiplier $1$ up to the three terms in \eqref{eq:quadratic-major-arc-bound}.

\subsection{Completion of the proof}

Let $\ee>0$ be prescribed. Choose $0<\eta<\min(\ee/20,1/44)$ and put
\begin{align}\label{eq:quadratic-parameter-choice}
C_0=X^{1/2-\eta},\quad C=X^{1/2+\eta},\quad Q_1=X^{2/11},\quad Q_2=X^{4/11},\quad Q=X^{6/11}.
\end{align}
We have $C_0<C<Q$ and $Q>X^{1/2}$. The quadratic-form dual sum on $c\le C_0$ is negligible because ${C_0^2}/{X}={X}/{C^2}=X^{-2\eta}.$
The support and row-injectivity conditions follow from
\begin{align*}
\frac{C_0Q_2}{X}=\frac{C_0Q_1^2}{X}=X^{-3/22-\eta},\quad\frac{C_0Q_1}{X}=X^{-7/22-\eta},\quad\frac{Q}{C_0}=X^{1/22+\eta}.
\end{align*}
After $\eta$ is fixed, choose the auxiliary constants in $\delta=X^{-1+\ee_0}$, the $z$-truncation, and the Poisson truncation sufficiently small, in particular with $\ee_0+\eta_0<\eta$ and $\ee_1<\ee_0/10$. Then
\begin{align*}
C_0Q_2\delta=o(1),\quad C_0Q_1\delta=o(1),\quad C_0^2\delta X^{\eta_0}=o(1),\quad C_0Q_1^2\delta X^{\eta_0}=o(1),
\end{align*}
and $4C_0<Q$ for sufficiently large $X$. The harmless powers used in the transform bounds and divisor estimates are also chosen sufficiently small relative to $\eta$ and the prescribed $\ee$. Thus they preserve the effective-block condition $N/(c/m_1)=o(1)$ and are absorbed, together with the loss $X^\eta$, into $X^\ee$ in the final bound. All dependence on the fixed form $\mathcal Q$, including that arising through $D$ and $R$, is absorbed into the implied constant. Consequently, we have
\begin{align*}
S_1^\mathcal{Q}+S_2^\mathcal{Q}\ll X^{21/22+\ee},
\end{align*}
which proves Theorem \ref{T1}.

\subsection*{Acknowledgments}
The authors are very grateful to the referee for some extremely valuable comments and suggestions. 
Hu is supported by the National Natural Science Foundation of China (Grant No.\,12201363) and the Natural Science Foundation of Shandong Province (Grant Nos.\,ZR2022QA047, ZR2024MA053). L\"u is supported by the Key R\&D Program of Shandong Province (No.\,2026CXPT038) and the National Natural Science Foundation of China (No.\,12631001).

\subsection*{Data Availability}
No data was used for the research described in the article.

\subsection*{Conflict of interest}
The authors declare that there is no conflict of interest.

\bibliographystyle{plain}
\bibliography{reference}

@article{AS,
	title = {Exponential sums and {Newton} polyhedra: cohomology and estimates},
	author = {Adolphson, A. and Sperber, S.},
	fjournal = {Annals of Mathematics. Second Series},
	journal = {Ann. of Math. (2)},
	issn = {0003-486X},
	volume = {130},
	number = {2},
	pages = {367--406},
	year = {1989},
	language = {English},
	doi = {10.2307/1971424},
	zbMATH = {4191864},
	Zbl = {0723.14017}
}

@article{Blomer-2004-IMRN,
	author = {Blomer, V.},
	title = {Shifted convolution sums and subconvexity bounds for automorphic {{\(L\)}}-functions},
	fjournal = {IMRN. International Mathematics Research Notices},
	journal = {Int. Math. Res. Not.},
	issn = {1073-7928},
	volume = {2004},
	number = {73},
	pages = {3905--3926},
	year = {2004},
	language = {English},
	doi = {10.1155/S1073792804142505},
	zbMATH = {2140876},
	Zbl = {1066.11019}
}

@article{Blomer-Li-2025,
	author = {V. Blomer and J. X. Li},
	title = {A higher rank shifted convolution problem with applications to {{\(L\)}}-functions},
	journal = {Proc. London Math. Soc.},
	volume = {133},
	number = {3},
	pages = {e70212},
	doi = {https://doi.org/10.1112/plms.70212},
	url = {https://londmathsoc.onlinelibrary.wiley.com/doi/abs/10.1112/plms.70212},
	eprint = {https://londmathsoc.onlinelibrary.wiley.com/doi/pdf/10.1112/plms.70212},
	year = {2026}
}

@article{Blomer--Harcos-2008-Duke-Math,
	author = {Blomer, V. and Harcos, G.},
	title = {The spectral decomposition of shifted convolution sums},
	fjournal = {Duke Mathematical Journal},
	journal = {Duke Math. J.},
	issn = {0012-7094},
	volume = {144},
	number = {2},
	pages = {321--339},
	year = {2008},
	language = {English},
	doi = {10.1215/00127094-2008-038},
	zbMATH = {5317182},
	Zbl = {1246.11108}
}

@article{Duke-Friedlander-Iwaniec--1993-Invent,
	author = {Duke, W. and Friedlander, J. and Iwaniec, H.},
	title = {Bounds for automorphic {{\(L\)}}-functions},
	fjournal = {Inventiones Mathematicae},
	journal = {Invent. Math.},
	issn = {0020-9910},
	volume = {112},
	number = {1},
	pages = {1--8},
	year = {1993},
	language = {English},
	doi = {10.1007/BF01232422},
	url = {https://eudml.org/doc/144092},
	zbMATH = {175636},
	Zbl = {0765.11038}
}

@article{Duke-Friedlander-Iwaniec--1994-Invent,
	author = {Duke, W. and Friedlander, J. B. and Iwaniec, H.},
	title = {A quadratic divisor problem},
	fjournal = {Inventiones Mathematicae},
	journal = {Invent. Math.},
	issn = {0020-9910},
	volume = {115},
	number = {2},
	pages = {209--217},
	year = {1994},
	language = {English},
	doi = {10.1007/BF01231758},
	url = {https://eudml.org/doc/144167},
	zbMATH = {549843},
	Zbl = {0791.11049}
}

@book {goldfeld,
	AUTHOR = {Goldfeld, D.},
	TITLE = {Automorphic forms and {$L$}-functions for the group {${\rm
	GL}(n,\bold R)$}},
	SERIES = {Cambridge Studies in Advanced Mathematics},
	VOLUME = {99},
	PUBLISHER = {Cambridge University Press, Cambridge},
	YEAR = {2006},
	PAGES = {xiv+493},
	ISBN = {978-0-521-83771-2; 0-521-83771-5},
	MRCLASS = {11F55 (11F66 11F70 11R39)},
	MRNUMBER = {2254662},
	MRREVIEWER = {Emmanuel\ P.\ Royer},
	DOI = {10.1017/CBO9780511542923},
	URL = {https://doi.org/10.1017/CBO9780511542923},
}

@article {GL,
	AUTHOR = {Goldfeld, D. and Li, X. Q.},
	TITLE = {Voronoi formulas on {${\rm GL}(n)$}},
	JOURNAL = {Int. Math. Res. Not.},
	FJOURNAL = {International Mathematics Research Notices},
	YEAR = {2006},
	PAGES = {Art. ID 86295, 25},
	ISSN = {1073-7928,1687-0247},
	MRCLASS = {11F66 (11F70 11M41)},
	MRNUMBER = {2233713},
	MRREVIEWER = {Kazuyuki\ Hatada},
	DOI = {10.1155/IMRN/2006/86295},
	URL = {https://doi.org/10.1155/IMRN/2006/86295},
}

@article{Harcos-2003-Math-Ann,
	author = {Harcos, G.},
	title = {An additive problem in the {Fourier} coefficients of cusp forms},
	fjournal = {Mathematische Annalen},
	journal = {Math. Ann.},
	issn = {0025-5831},
	volume = {326},
	number = {2},
	pages = {347--365},
	year = {2003},
	language = {English},
	doi = {10.1007/s00208-003-0421-1},
	zbMATH = {1982574},
	Zbl = {1045.11028}
}

@article{Harcos-Michel,
	author = {Harcos, G. and Michel, P.},
	title = {The subconvexity problem for {Rankin}-{Selberg} {{\(L\)}}-functions and equidistribution of {Heegner} points. {II}},
	fjournal = {Inventiones Mathematicae},
	journal = {Invent. Math.},
	issn = {0020-9910},
	volume = {163},
	number = {3},
	pages = {581--655},
	year = {2006},
	language = {English},
	doi = {10.1007/s00222-005-0468-6},
	url = {infoscience.epfl.ch/record/129122},
	zbMATH = {5013733},
	Zbl = {1111.11027}
}

@article {hb,
	AUTHOR = {Heath-Brown, D. R.},
	TITLE = {Cubic forms in ten variables},
	JOURNAL = {Proc. London Math. Soc. (3)},
	FJOURNAL = {Proceedings of the London Mathematical Society. Third Series},
	VOLUME = {47},
	YEAR = {1983},
	NUMBER = {2},
	PAGES = {225--257},
	ISSN = {0024-6115,1460-244X},
	MRCLASS = {11D72 (11D88 11E76 11G25 11P55)},
	MRNUMBER = {703978},
	MRREVIEWER = {D.\ J.\ Lewis},
	DOI = {10.1112/plms/s3-47.2.225},
	URL = {https://doi.org/10.1112/plms/s3-47.2.225},
}

@article{Holowinsky-2009-Duke-Math,
	author = {Holowinsky, R.},
	title = {A sieve method for shifted convolution sums},
	fjournal = {Duke Mathematical Journal},
	journal = {Duke Math. J.},
	issn = {0012-7094},
	volume = {146},
	number = {3},
	pages = {401--448},
	year = {2009},
	language = {English},
	doi = {10.1215/00127094-2009-002},
	zbMATH = {5505895},
	Zbl = {1218.11089}
}

@article {hooley,
	AUTHOR = {Hooley, C.},
	TITLE = {An asymptotic formula in the theory of numbers},
	JOURNAL = {Proc. London Math. Soc. (3)},
	FJOURNAL = {Proceedings of the London Mathematical Society. Third Series},
	VOLUME = {7},
	YEAR = {1957},
	PAGES = {396--413},
	ISSN = {0024-6115,1460-244X},
	MRCLASS = {10.1X},
	MRNUMBER = {90613},
	MRREVIEWER = {A.\ L.\ Whiteman},
	DOI = {10.1112/plms/s3-7.1.396},
	URL = {https://doi.org/10.1112/plms/s3-7.1.396},
}

@article {HL-exp-sums,
	AUTHOR = {Hu, G. W.  and   L\"{u}, G. S.},
	TITLE = {Exponential sums connected with quadratic forms and their applications.},
	NOTE ={To appear in {\em Acta Math. Sin. (Engl. Ser.)}, 2026},
	}

@article {HL-preprint,
	AUTHOR = {Hu, G. W.  and   L\"{u}, G. S.},
	TITLE = {Shifted convolution sums of {F}ourier coefficients of  \textsc{GL}(3) cusp form.},
	JOURNAL ={Submitted for publication.},
	}

@incollection {js,
	AUTHOR = {Jacquet, H. and Shalika, J.},
	TITLE = {Rankin-{S}elberg convolutions: {A}rchimedean theory},
	SERIES = {Israel Math. Conf. Proc.},
	VOLUME = {2},
	PAGES = {125--207},
	PUBLISHER = {Weizmann, Jerusalem},
	YEAR = {1990},
	MRCLASS = {22E50},
	MRNUMBER = {1159102},
	MRREVIEWER = {Brian\ E.\ Blank},
}

@article {JL,
	AUTHOR = {Jiang, Y. J. and L\"u, G. S.},
	TITLE = {Shifted convolution sums for higher rank groups},
	JOURNAL = {Forum Math.},
	FJOURNAL = {Forum Mathematicum},
	VOLUME = {31},
	YEAR = {2019},
	NUMBER = {2},
	PAGES = {361--383},
	ISSN = {0933-7741,1435-5337},
	MRCLASS = {11F11 (11F27 11F30 11F37)},
	MRNUMBER = {3918446},
	MRREVIEWER = {Yichao\ Zhang},
	DOI = {10.1515/forum-2017-0269},
	URL = {https://doi.org/10.1515/forum-2017-0269},
}

@incollection{Jutila-1992,
	author = {Jutila, M.},
	title = {Transformations of exponential sums},
	booktitle = {Proceedings of the Amalfi conference on analytic number theory, held at Maiori, Amalfi, Italy, from 25 to 29 September, 1989},
	pages = {263--270},
	year = {1992},
	publisher = {Salerno: Universit{\'a} di Salerno},
	language = {English},
	zbMATH = {421684},
	Zbl = {0794.11033}
}

@article{Jutila-1996-Math-Z,
	author = {Jutila, M.},
	title = {The additive divisor problem and its analogs for {Fourier} coefficients of cusp forms. {I}},
	fjournal = {Mathematische Zeitschrift},
	journal = {Math. Z.},
	issn = {0025-5874},
	volume = {223},
	number = {3},
	pages = {435--461},
	year = {1996},
	language = {English},
	doi = {10.1007/PL00004270},
	url = {https://eudml.org/doc/174930},
	zbMATH = {980744},
	Zbl = {0865.11062}
}

@article {KMV,
	AUTHOR = {Kowalski, E. and Michel, P. and VanderKam, J.},
	TITLE = {Rankin-{S}elberg {$L$}-functions in the level aspect},
	JOURNAL = {Duke Math. J.},
	FJOURNAL = {Duke Mathematical Journal},
	VOLUME = {114},
	YEAR = {2002},
	NUMBER = {1},
	PAGES = {123--191},
	ISSN = {0012-7094,1547-7398},
	MRCLASS = {11F66 (11G40)},
	MRNUMBER = {1915038},
	MRREVIEWER = {K.\ Soundararajan},
	DOI = {10.1215/S0012-7094-02-11416-1},
	URL = {https://doi.org/10.1215/S0012-7094-02-11416-1},
}

@article {MS,
	AUTHOR = {Miller, S. D. and Schmid, W.},
	TITLE = {Automorphic distributions, {$L$}-functions, and {V}oronoi
	summation for {${\rm GL}(3)$}},
	JOURNAL = {Ann. of Math. (2)},
	FJOURNAL = {Annals of Mathematics. Second Series},
	VOLUME = {164},
	YEAR = {2006},
	NUMBER = {2},
	PAGES = {423--488},
	ISSN = {0003-486X,1939-8980},
	MRCLASS = {11F66 (11F70 11M41)},
	MRNUMBER = {2247965},
	MRREVIEWER = {Andre\ Reznikov},
	DOI = {10.4007/annals.2006.164.423},
	URL = {https://doi.org/10.4007/annals.2006.164.423},
}

@article{Michel-2004-Ann-Math,
	author = {Michel, P.},
	title = {The subconvexity problem for {Rankin}-{Selberg} {{\(L\)}}-functions and equidistribution of {Heegner} points},
	fjournal = {Annals of Mathematics. Second Series},
	journal = {Ann. of Math. (2)},
	issn = {0003-486X},
	volume = {160},
	number = {1},
	pages = {185--236},
	year = {2004},
	language = {English},
	doi = {10.4007/annals.2004.160.185},
	zbMATH = {2151002},
	Zbl = {1068.11033}
}

@article {Munshi-Duke2013,
	AUTHOR = {Munshi, R.},
	TITLE = {Shifted convolution sums for
	{${\rm GL}(3)\times{\rm GL}(2)$}},
	JOURNAL = {Duke Math. J.},
	FJOURNAL = {Duke Mathematical Journal},
	VOLUME = {162},
	YEAR = {2013},
	NUMBER = {13},
	PAGES = {2345--2362},
	ISSN = {0012-7094,1547-7398},
	MRCLASS = {11F66 (11M41)},
	MRNUMBER = {3127803},
	MRREVIEWER = {M.\ Ram\ Murty},
	DOI = {10.1215/00127094-2371416},
	URL = {https://doi.org/10.1215/00127094-2371416},
}

@incollection{Munshi-2013-divisor,
	author = {Munshi, R.},
	title = {Shifted convolution of divisor function {{\(d_3\)}} and {Ramanujan} {{\(\tau\)}} function},
	booktitle = {The legacy of Srinivasa Ramanujan. Proceedings of the international conference in celebration of the 125th anniversary of Ramanujan's birth, University of Delhi, Delhi, India, December 17--22, 2012},
	isbn = {978-93-80416-13-7},
	pages = {251--260},
	year = {2013},
	publisher = {Mysore: Ramanujan Mathematical Society},
	language = {English},
	zbMATH = {6389649},
	Zbl = {1370.11059}
}

@article{Sarnak-2001-J-Funct-Anal,
	author = {Sarnak, P.},
	title = {Estimates for {Rankin}-{Selberg} {{\(L\)}}-functions and quantum unique ergodicity},
	fjournal = {Journal of Functional Analysis},
	journal = {J. Funct. Anal.},
	issn = {0022-1236},
	volume = {184},
	number = {2},
	pages = {419--453},
	year = {2001},
	language = {English},
	doi = {10.1006/jfan.2001.3783},
	zbMATH = {1656006},
	Zbl = {1006.11022}
}

@article{Pitt-1995-Duke,
	author = {Pitt, N. J. E.},
	title = {On shifted convolutions of {{\(\zeta^3(s)\)}} with automorphic {{\(L\)}}-functions},
	fjournal = {Duke Mathematical Journal},
	journal = {Duke Math. J.},
	issn = {0012-7094},
	volume = {77},
	number = {2},
	pages = {383--406},
	year = {1995},
	language = {English},
	doi = {10.1215/S0012-7094-95-07711-4},
	zbMATH = {784708},
	Zbl = {0855.11024}
}

@misc{Selberg-1965,
	author = {Selberg, A.},
	title = {On the estimation of {Fourier} coefficients of modular forms},
	year = {1965},
	language = {English},
	howpublished = {Proc. {Symp}. {Pure} {Math}. 8:1-15},
	zbMATH = {3229797},
	Zbl = {0142.33903}
}

@article{Tang-2020-BAMS,
	author = {Tang, H. C.},
	title = {A shift convolution sum of {{\(d_3\)}} and the {Fourier} coefficients of {Hecke}-{Maass} forms. {II}},
	fjournal = {Bulletin of the Australian Mathematical Society},
	journal = {Bull. Aust. Math. Soc.},
	issn = {0004-9727},
	volume = {101},
	number = {3},
	pages = {401--414},
	year = {2020},
	language = {English},
	doi = {10.1017/S000497271900100X},
	zbMATH = {7201220},
	Zbl = {1440.11068}
}

@article{Topacogullari-2016,
	author = {Topacogullari, B.},
	title = {The shifted convolution of divisor functions},
	fjournal = {The Quarterly Journal of Mathematics},
	journal = {Q. J. Math.},
	issn = {0033-5606},
	volume = {67},
	number = {2},
	pages = {331--363},
	year = {2016},
	language = {English},
	doi = {10.1093/qmath/haw010},
	zbMATH = {6595938},
	Zbl = {1356.11070}
}

@article {xi-shifted-sum-Forum,
	AUTHOR = {Xi, P.},
	TITLE = {A shifted convolution sum for {$\rm GL(3)\times\rm GL(2)$}},
	JOURNAL = {Forum Math.},
	FJOURNAL = {Forum Mathematicum},
	VOLUME = {30},
	YEAR = {2018},
	NUMBER = {4},
	PAGES = {1013--1027},
	ISSN = {0933-7741,1435-5337},
	MRCLASS = {11F30 (11L07 11M41 11T23)},
	MRNUMBER = {3824803},
	MRREVIEWER = {Bingrong\ Huang},
	DOI = {10.1515/forum-2017-0236},
	URL = {https://doi.org/10.1515/forum-2017-0236},
}

\end{document}